\documentclass[a4paper,10pt]{amsart}
\usepackage{enumerate, amsmath, amsfonts, amssymb, amsthm, mathtools, thmtools, wasysym, graphics, graphicx, frcursive,xparse,comment,ytableau,multirow,comment,multicol}
\usepackage[normalem]{ulem}
\usepackage[boxed,norelsize]{algorithm2e}
\usepackage[all]{xy}
\usepackage{hyperref}
\usepackage{url, hypcap}
\hypersetup{colorlinks=true, citecolor=darkblue, linkcolor=darkblue}
\usepackage{tikz}
\usetikzlibrary{calc,through,backgrounds,shapes,matrix}
\usepackage{colortbl}
\definecolor{darkblue}{rgb}{0.0,0,0.7} 
\newcommand{\darkblue}{\color{darkblue}} 
\definecolor{darkred}{rgb}{0.7,0,0} 
\definecolor{lightgrey}{rgb}{0.7,0.7,0.7} 
\newtheorem{theorem}{Theorem}[section]
\newtheorem{proposition}[theorem]{Proposition}
\newtheorem{corollary}[theorem]{Corollary}
\newtheorem{lemma}[theorem]{Lemma}
\theoremstyle{definition}
\newtheorem{definition}[theorem]{Definition}
\newtheorem{example}[theorem]{Example}

\newtheorem{remark}[theorem]{Remark}
\usepackage[procnames]{listings}
\usepackage{cleveref}
\usepackage[all]{xy}
\usepackage[T1]{fontenc}
\crefformat{footnote}{#2\footnotemark[#1]#3}
\crefformat{conjecture}{Conjecture~#2#1#3}
\newcommand{\defn}[1]{\emph{\darkblue #1}}
\newcommand{\w}{{\sf w}}
\newcommand{\sw}{{\sf sweep}_m}

\newcommand{\swp}{{\sf presweep}_m}
\newcommand{\iswp}{{\sf inverse\_presweep}_m}
\newcommand{\f}{{\sf forget}}
\newcommand{\invf}{{\sf rightmost}}
\newcommand{\equ}{{\sf Equitable}}
\newcommand{\invfs}{{\sf successful}}
\newcommand{\uu}{{\sf u}}
\newcommand{\vv}{{\sf v}}
\newcommand{\us}{{\sf s}}
\newcommand{\ut}{{\sf t}}
\newcommand{\bb}{{\sf u}^*}

\newcommand{\block}{{\sf block}}
\newcommand{\swg}{{\sf sweep}}
\newcommand{\bmone}{{\blacksquare}}

\newcommand{\bmoner}{{\textcolor{red}{\blacksquare}}}
\newcommand{\bmzero}{{\cdot}}
\newcommand{\rightmost}{{\sf leftmost}}
\newcommand{\AZ}{{\mathcal{A}_{\mathbb{Z}}}}
\newcommand{\AN}{{\mathcal{A}_{\mathbb{N}}}}
\newcommand{\Am}{{\mathcal{A}}}
\newcommand{\DD}{{\mathcal{D}}}
\newcommand{\pat}{{\mathsf{path}}}
\renewcommand{\mod}{\operatorname{mod}}
\definecolor{keywords}{RGB}{255,0,90}
\definecolor{comments}{RGB}{0,0,113}
\definecolor{red}{RGB}{160,0,0}
\definecolor{green}{RGB}{0,150,0}
\crefname{algocf}{alg.}{algs.}
\Crefname{algocf}{Algorithm}{Algorithms}

\usepackage[colorinlistoftodos]{todonotes}

\title[Sweeping up Zeta]
  {Sweeping up Zeta}

\author[H.~Thomas]{Hugh Thomas}
\address[H.~Thomas]{LaCIM, Universit\'e du Qu\'ebec \`a Mont\'eal \\
Montr\'eal (Qu\'ebec), Canada}
\email{hugh.ross.thomas@gmail.com}

\author[N.~Williams]{Nathan Williams}
\address[N.~Williams]{LaCIM, Universit\'e du Qu\'ebec \`a Mont\'eal \\
Montr\'eal (Qu\'ebec), Canada}
\email{nathan.f.williams@gmail.com}

\date{\today}
\keywords{zeta map, sweep map, rational Catalan combinatorics, affine Weyl groups, stable marriages}
\subjclass[2000]{05E99}

\begin{document}

\begin{abstract}
We repurpose the main theorem of~\cite{thomas2014cyclic} to prove that modular sweep maps are bijective.  We construct the inverse of the modular sweep map by passing through an intermediary set of equitable partitions; motivated by an analogy to stable marriages, we prove that the set of equitable partitions for a fixed word forms a distributive lattice when ordered componentwise.  We conclude that the general sweep maps defined in~\cite{armstrong2014sweep} are bijective.  As a special case of particular interest, this gives the first proof that the zeta map on rational Dyck paths is a bijection.
\end{abstract}

\maketitle

\section{Introduction}
\label{sec:intro}

The sweep map of~\cite{armstrong2014sweep} is a broad generalization of the zeta map on Dyck paths, originally defined by J.~Haglund and M.~Haiman in the context of the study of diagonal harmonics.  Proving bijectivity of the sweep map was an open problem with significant implications in the study of rational Catalan combinatorics (see \Cref{sec:history}).  We solve this problem in~\Cref{thm:sweep_bij}.

To put the reader in the right frame of mind for our solution, we begin with a ``real world'' interpretation of the problem.

\subsection{Scheduling Tasks}
\label{sec:real_world}
Consider the following ``real world'' problem of scheduling recurrent daily tasks (for example, on a computer).  The day is divided into $m$ hours, and there are $N$ tasks to be carried out each day (numbered, or prioritized, from $1$ to $N$).  For simplicity, let us suppose that the total amount of time required to carry out all of the tasks is some multiple of $m$.  Each task takes an integer number of hours to complete: a task can take zero hours, but no task takes as much as an entire day.    Tasks start on the hour, and cannot be interrupted once started.  Tasks can be worked on concurrently, and the starting order for the tasks within the day is specified---the $j$th task must start before or at the same time as the $(j+1)$st task.  The schedule repeats every day, so a recurrent task can start at the end of one day and finish at the beginning of the next day.

Under these assumptions, it is reasonable to ask for an assignment of starting hours for the tasks so that the workload throughout the day is constant.  We call an answer---the starting times of the tasks---an \defn{equitable partition} for our scheduling problem.  It is not immediately clear that an answer necessarily exists.

\begin{example}
\label{ex:ex1}
Suppose we divide the day into $m=5$ hours, and we have a list of $N=7$ recurrent daily tasks such that the length (in hours) that it takes to complete the $j$th task is the $j$th element of the sequence $[1,3,3,1,4,2,1]$.  Since completion of all the tasks, one after the other, would take $(1+3+3+1+4+2+1) \text{ hours} = 15 \text{ hours}=3 \cdot 5 \text{ hours} = 3 \text{ days},$ we should work on three tasks at a time.

Starting tasks in the hours $[1,1,2,2,3,4,5]$ (so that tasks one and two both start in hour 1, tasks three and four in hour 2, and so on until task seven starts in hour 5) or in the hours $[1,2,2,4,5,5,5]$ both yield equitable partitions.  These are illustrated below---each task is assigned a row and each hour a column, and we mark which tasks we worked on in each hour with a symbol $\bmone$ in the corresponding cell.  The conditions for an equitable partition force each column to have the same number of copies of $\bmone$, and constrain the starting point of the $j$th row (colored red) to be weakly to the left of that of the $(j+1)$st row.

\[\begin{array}{cc|ccccc}
& & \multicolumn{5}{c}{{\text{Hours}}} \\
& & 1 & 2& 3& 4& 5\\ \hline
\multirow{7}{*}{{Tasks}}
&1& \bmoner & \bmzero & \bmzero & \bmzero & \bmzero \\
&2&\bmoner & \bmone & \bmone & \bmzero & \bmzero \\
&3&\bmzero & \bmoner & \bmone & \bmone & \bmzero\\
&4&\bmzero & \bmoner & \bmzero & \bmzero & \bmzero \\
&5&\bmone & \bmzero & \bmoner & \bmone & \bmone \\
&6&\bmzero & \bmzero & \bmzero & \bmoner & \bmone \\
&7&\bmzero & \bmzero & \bmzero & \bmzero & \bmoner
\end{array} \hspace{2em}\text{and}\hspace{2em} \begin{array}{cc|ccccc}
& & \multicolumn{5}{c}{{\text{Hours}}} \\
& & 1 & 2& 3& 4& 5\\ \hline
\multirow{7}{*}{{Tasks}}
&1& \bmoner & \bmzero & \bmzero & \bmzero & \bmzero \\
&2&\bmzero & \bmoner & \bmone & \bmone & \bmzero \\
&3&\bmzero & \bmoner & \bmone & \bmone & \bmzero\\
&4&\bmzero & \bmzero & \bmzero & \bmoner & \bmzero \\
&5&\bmone & \bmone & \bmone & \bmzero & \bmoner \\
&6&\bmone & \bmzero & \bmzero & \bmzero & \bmoner \\
&7&\bmzero & \bmzero & \bmzero & \bmzero & \bmoner
\end{array}\]
\end{example}


Now imagine that an (admittedly, somewhat fussy) inspector arrives at the last hour of a day, and wants to watch each of the tasks being done---one at a time, each one performed from beginning to end without interruption.\footnote{We must insist that the inspector arrives at the last hour, since for any other specified hour, one can construct a scheduling problem that has no equitable partitions with a task starting in that hour.}  After you complete a task under observation, the inspector next watches the lowest-numbered task beginning as promptly as possible among those tasks s/he has not yet watched.  A \defn{successful partition} is an equitable partition that allows our inspector to watch each task performed exactly once, without any delay between the end of one task and the beginning of another.

\begin{example}
\label{ex:ex2}
The equitable partition with starting hours $[1,2,2,4,5,5,5]$ (on the right in~\Cref{ex:ex1}) is successful because our inspector can observe the tasks without interruption as follows:

\[\begin{array}{cc|c|ccccc|ccccc|cccc}
& & \multicolumn{15}{c}{{\text{Hours}}} \\
& & 5 & 1& 2& 3& 4& 5 & 1& 2& 3& 4& 5 & 1& 2&3& 4\\ \hline
\multirow{7}{*}{{Tasks}}
&1 & \bmzero&\bmzero & \bmzero & \bmzero & \bmzero & \bmzero&\bmzero & \bmzero & \bmzero & \bmzero & \bmzero & \bmone & \bmzero & \bmzero & \bmzero\\
&2 & \bmzero&\bmzero & \bmzero & \bmzero & \bmzero & \bmzero&\bmzero & \bmone & \bmone & \bmone & \bmzero &\bmzero & \bmzero & \bmzero & \bmzero\\
&3 & \bmzero&\bmzero & \bmzero & \bmzero & \bmzero & \bmzero&\bmzero & \bmzero & \bmzero & \bmzero & \bmzero&\bmzero & \bmone & \bmone & \bmone\\
&4 & \bmzero&\bmzero & \bmzero & \bmzero & \bmone & \bmzero&\bmzero & \bmzero & \bmzero & \bmzero & \bmzero&\bmzero & \bmzero & \bmzero & \bmzero \\
&5 & \bmone&\bmone & \bmone & \bmone & \bmzero & \bmzero&\bmzero & \bmzero & \bmzero & \bmzero & \bmzero &\bmzero & \bmzero & \bmzero & \bmzero\\
&6 & \bmzero&\bmzero & \bmzero & \bmzero & \bmzero & \bmone&\bmone & \bmzero & \bmzero & \bmzero & \bmzero &\bmzero & \bmzero & \bmzero & \bmzero\\
&7 & \bmzero&\bmzero & \bmzero & \bmzero & \bmzero & \bmzero&\bmzero & \bmzero & \bmzero & \bmzero & \bmone&\bmzero & \bmzero & \bmzero & \bmzero
\end{array}\]
%

The equitable partition with starting hours $[1,1,2,2,3,4,5]$ (on the left in~\Cref{ex:ex1}) is \emph{not} successful because the inspector experiences delays between the end of the third task and the beginning of the second, and between the end of the sixth task and the beginning of the fourth:

\[\begin{array}{cc|c|ccccc|ccccc|ccccc|c}
& & \multicolumn{15}{c}{{\text{Hours}}} \\
& & 5 & 1& 2& 3& 4& 5 & 1& 2& 3& 4& 5 & 1& 2&3& 4 & 5 &1\\ \hline
\multirow{7}{*}{{Tasks}}
&1 & \bmzero&\bmone & \bmzero & \bmzero & \bmzero & \bmzero&\bmzero & \bmzero & \bmzero & \bmzero & \bmzero&\bmzero & \bmzero & \bmzero & \bmzero& \bmzero & \bmzero\\
&2 & \bmzero&\bmzero & \bmzero & \bmzero & \bmzero & \bmzero&\bmone & \bmone & \bmone & \bmzero & \bmzero&\bmzero & \bmzero & \bmzero & \bmzero& \bmzero & \bmzero\\
&3 & \bmzero&\bmzero & \bmone & \bmone & \bmone & \bmzero&\bmzero & \bmzero & \bmzero & \bmzero & \bmzero&\bmzero & \bmzero & \bmzero & \bmzero& \bmzero & \bmzero\\
&4 & \bmzero&\bmzero & \bmzero & \bmzero & \bmzero & \bmzero&\bmzero & \bmzero & \bmzero & \bmzero & \bmzero&\bmzero & \bmone & \bmzero & \bmzero& \bmzero & \bmzero \\
&5 & \bmzero&\bmzero & \bmzero & \bmzero & \bmzero & \bmzero&\bmzero & \bmzero & \bmzero & \bmzero & \bmzero&\bmzero & \bmzero & \bmone & \bmone& \bmone & \bmone\\
&6 & \bmzero&\bmzero & \bmzero & \bmzero & \bmzero & \bmzero&\bmzero & \bmzero & \bmzero & \bmone & \bmone&\bmzero & \bmzero & \bmzero & \bmzero& \bmzero & \bmzero\\
&7 & \bmone&\bmzero & \bmzero & \bmzero & \bmzero & \bmzero&\bmzero & \bmzero & \bmzero & \bmzero & \bmzero&\bmzero & \bmzero & \bmzero & \bmzero& \bmzero & \bmzero
\end{array}\]
\end{example}

It turns out that the successful partition is unique---it is the equitable partition for which each task is started as late as possible (among all equitable partitions).  In an amusing twist to what ``real-life'' experience might suggest, this particular inspector will be satisfied only if the worker procrastinates.

\subsection{The Modular Sweep Map} 
A solution to the scheduling problem above is closely related to inverting a simple and curious map defined on words.

\medskip

As above, we let $m,N \in \mathbb{N}$, but we now write $\Am$ for the set of words of length $N$ on the alphabet $\{0,1,2,\ldots,m-1\}$.  For a word $\w=\w_1 \w_2 \cdots \w_{N} \in \Am$ and for $1 \leq j \leq N$, define the \defn{modular level} of the letter $\w_j$ to be $\ell_j:=\sum_{i=1}^j \w_i \mod m$.

The \defn{modular sweep map} is the function $\sw:\Am \to \Am$ that sorts $\w \in \Am$ according to its modular levels as follows: initialize $\uu=\emptyset$ to be the empty word.  For $k=m-1,\ldots,2,1,0$, read $\w$ from right to left and append to $\uu$ all letters $\w_j$ whose level $\ell_j$ is equal to $k$.  Define $\sw(\w):=\uu$.


\begin{example}
\label{ex:zeta_intro}
Let $m=5$ and $N=7$.  We compute the modular levels of the word $\w = 3113214 \in \Am$ by summing the initial letters of $\w$ modulo $m$ and obtain the image $\uu:=\sw(\w)$ by sorting according to the levels (and then discarding the information about the levels).


\[\begin{array}{rccccccc}
\ell: &3&4&0&3&0&1&0\\
\w: &3&1&1&3&2&1&4
\end{array} \raisebox{0.2em}{$\xmapsto[\sw]{}$} \begin{array}{rccccccc}
\ell: & 4&3&3&1&0&0&0\\
\uu: & 1&3&3&1&4&2&1
\end{array}.\]


Note that $\uu$ records the lengths of the tasks, in order of priority, while $\w$ records the sequence of task lengths in the reverse of the order in which they are watched by the inspector in~\Cref{ex:ex2}.
Indeed, as we shall make precise, finding a sequence of tasks that the inspector can
watch in order with no breaks amounts to inverting the modular sweep map.
\end{example}
Our main result---proven in~\Cref{sec:sweep}---is that $\sw$ is invertible.\footnote{As G.~Warrington pointed out to us at the American Institute of Mathematics in 2012---sorting is not usually an invertible operation!}

\begin{theorem}
\label{thm:main_thm}
The modular sweep map is a bijection $\Am \to \Am$.
\end{theorem}



\subsection{Organization} The remainder of this paper is organized as follows.

We give a brief history in~\Cref{sec:history} by recalling the different contexts in which the modular sweep map has appeared.  

In~\Cref{sec:mod_presweep}, we define the modular presweep map.  This map differs from the modular sweep map in that it preserves the additional information of the modular levels.  Partitioned words in the image of the modular presweep map are called \emph{successful partitions}.  It is easy to invert the modular presweep map, as described in~\Cref{sec:inv_modular_presweep}.

In~\Cref{sec:equ_and_succ}, we introduce the notion of \emph{equitable partitions} and show that a succcessful partition is equitable.  Using an algorithm communicated to us by F.~Aigner, C.~Ceballos, and R.~Sulzgruber (inspired by our related~\Cref{map:rightmost}), we construct the rightmost equitable partition in~\Cref{thm:unique_rightmost}, and we show how to modify any equitable partition to produce a successful partition in~\Cref{thm:unique_succ}.  \Cref{thm:right_eq_succ} concludes that the rightmost equitable partition and successful partition are the same.

We apply the results of~\Cref{sec:modular_presweep,sec:remember} to prove~\Cref{thm:main_thm}---that the modular sweep map is a bijection---in~\Cref{sec:sweep}. 


In analogy to the stable marriage problem of D.~Gale and L.~Shapley, we construct the leftmost equitable partition in~\Cref{lem:rightmost}, and we show that the set of all equitable partitions may be given the structure of a distributive lattice in \Cref{thm:dis_lattice}.  \Cref{sec:eq_lattice} is not needed for the applications of~\Cref{thm:main_thm} given in~\Cref{sec:applications}, and may be skipped.

 In~\Cref{sec:applications}, we use~\Cref{thm:main_thm} to solve two problems from the literature: we show in~\Cref{sec:general_sweep} how to invert the sweep map of~\cite{armstrong2014sweep} on words with letters in $\mathbb{Z}$ (rather than $\mathbb{Z}/m\mathbb{Z}$), and we conclude in~\Cref{sec:dyck} that the zeta map is bijective on Dyck paths and rational Dyck paths.

\section*{Acknowledgements}
The second author is indebted to Vic Reiner and Dennis Stanton for originally suggesting this area to him.

We thank Drew Armstrong, Cesar Ceballos, Adriano Garsia, Nick Loehr, and Greg Warrington for their encouragement and enthusiasm, close reading, detection of typos, and suggestions for improvements to the exposition.  We thank Marko Thiel for pointing out that the argument of~\cite[Proposition 3.2]{armstrong2014sweep} could be extended to arbitrary alphabets in~\Cref{thm:sweep_dyck}.  We warmly thank Florian Aigner, Cesar Ceballos, and Robin Sulzgruber for finding~\Cref{map:leftmost} in the course of their study of an early draft of this paper, and then patiently explaining it to us at FPSAC 2016.  We thank the anonymous referee whose diligence and detailed comments greatly improved the paper.
An extended abstract of this paper was published in the proceedings of FPSAC 2017 \cite{fpsacabs}.

Both authors thank their parents for introducing them to stable marriages.

This research was supported by NSERC and the Canada Research Chairs program.
\section{History}
\label{sec:history}
\subsection{Diagonal Harmonics and the Zeta Map}

In their study of the space $\mathcal{DH}_n$ of diagonal harmonics~\cite{garsia1996remarkable}, A.~Garsia and M.~Haiman defined a rational function $C_n(q,t)$, symmetric in $q$ and $t$, with the property that $C_n(1,1)=\frac{1}{n+1}\binom{2n}{n}$.  They conjectured that $C_n(q,t)$ was actually a \emph{polynomial} in $q$ and $t$ with nonnegative coefficients.  Specializing one of the statistics to $1$, they gave a combinatorial interpretation of this polynomial using the area statistic on \defn{$n$-Dyck paths} (lattice paths from $(0,0)$ to $(n,n)$ that stay above the diagonal $y=x$):

\[C_n(q,1)=C_n(1,q)=\sum_{w \text{ an } n\text{-Dyck path}} q^{\mathrm{area}(w)}.\]

The search was on to find a statistic that manifested nonnegativity---an unknown statistic with the property that

\[C_n(q,t) = \sum_{w \text{ an } n\text{-Dyck path}} q^{\mathrm{area}(w)} t^{\mathrm{unknown}(w)}.\]

In~\cite{haglund2003conjectured}, ``after a prolonged investigation of tables of $C_n(q,t)$,'' J.~Haglund invented the idea of a \defn{bounce path}, which he used to propose exactly such a statistic.    Garsia and Haglund subsequently used these ideas to prove nonnegativity of $C_n(q,t)$ in~\cite{garsia2002proof}.

As the legend goes, Garsia sent a cryptic email to Haiman announcing Haglund's discovery---without providing any specifics as to what the statistic was.  Shortly after, Haiman announced that he, too, had produced the desired statistic.\footnote{Garsia subsequently expressed regret that he didn't send this email to Haiman several years earlier.}  Remarkably, Haglund's statistic and Haiman's statistic were \emph{different}.  In modern language, Haglund's statistic is known as \defn{$\mathrm{bounce}$}, while Haiman's is \defn{$\mathrm{dinv}$}.  Haiman and Haglund quickly developed a bijection from $n$-Dyck paths to themselves---the \defn{zeta map} $\zeta$ (see~\Cref{sec:zeta})~\cite{andrews2002ad,haglund2003conjectured}---such that \[(\mathrm{area}(w),\mathrm{bounce}(w))=(\mathrm{dinv}(\zeta(w)),\mathrm{area}(\zeta(w))).\]

As Dyck paths have been generalized (say, as in~\Cref{sec:dyck}, to lattice paths from $(0,0)$ to $(a,b)$ that stay above the main diagonal), so too have these zeta maps~\cite{loehr2003multivariate,egge2003schroder,gorsky2014compactified,lee2014combinatorics}.  A modern perspective is that there is only one statistic---area---along with a generalized zeta map~\cite{armstrong2014sweep}.  If such a zeta map is bijective on a set of generalized Dyck paths $\mathcal{D}$, one can combinatorially define polynomials
\[ \mathcal{D}(q,t):=\sum_{w\in \mathcal{D}} q^{\mathrm{area(w)}} t^{\mathrm{area(\zeta(w))}},\]
so that (by construction) $\mathcal{D}(q,1)=\mathcal{D}(1,q)$.  Surprisingly, these polynomials also often happen to be symmetric in $q$ and $t$.

Proving invertibility of these generalized zeta maps has been a traditionally difficult problem; combinatorially proving $(q,t)$-symmetry has been intractable.  Most recently, D.~Armstrong, N.~Loehr, and G.~Warrington have found very general versions of the zeta map, which they called \defn{sweep maps}~\cite[Section 3.4]{armstrong2014sweep}.  C.~Reutenauer independently developed a related map in an unpublished letter to Garsia, A.~Hicks, and E.~Leven~\cite{reut2013}.

\subsection{Suter's Cyclic Symmetry}
In~\cite{suter2002young}, R.~Suter defined a striking cyclic symmetry of order $n+1$ on the subposet of Young's lattice consisting of those partitions with largest hook at most $n$.  \Cref{fig:young} illustrates this for $n=4$.

\begin{figure}[htbp]
\vspace{-.7em}
\begin{center}
        \raisebox{-0.5\height}{\includegraphics[height=1.7in]{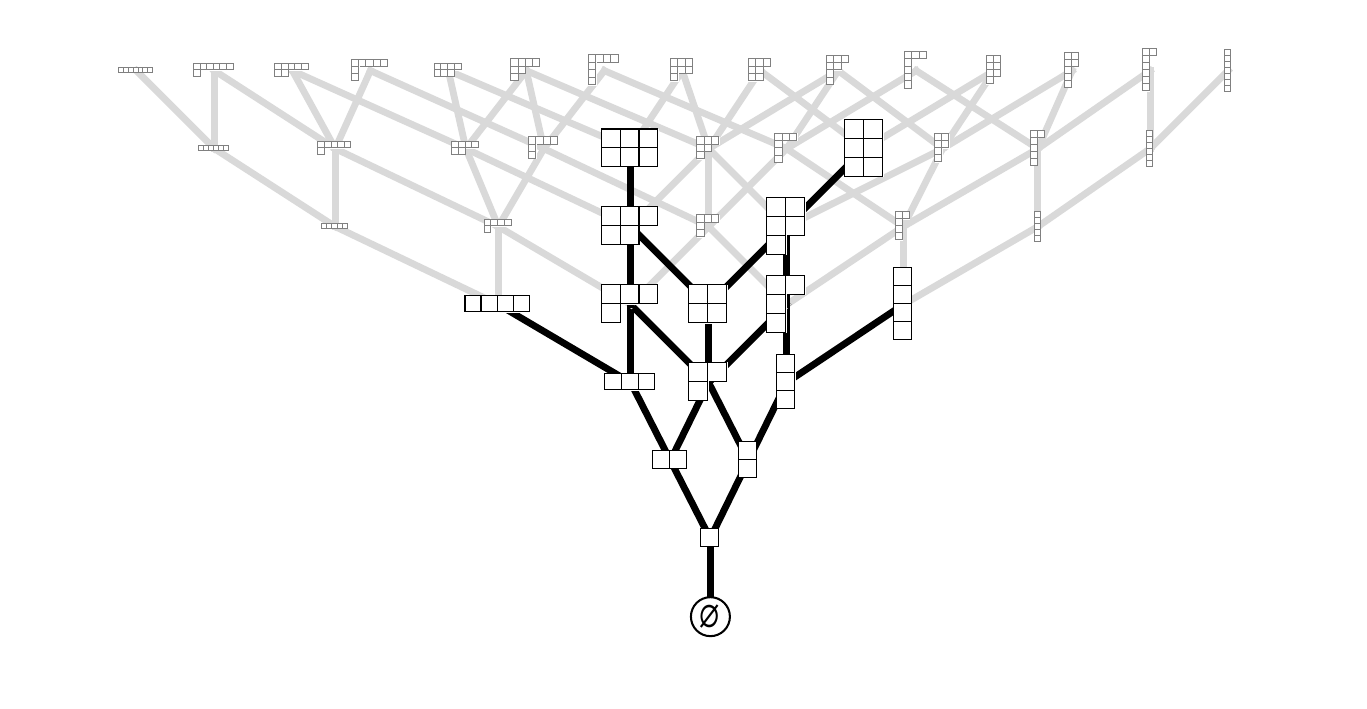}} \hspace{-1em} \raisebox{-0.5\height}{\includegraphics[height=1.7in]{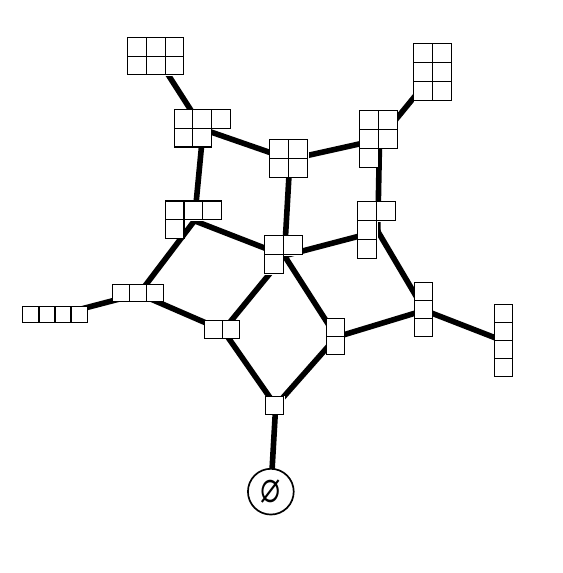}}
\end{center}
\caption{On the left is the subposet of Young's lattice containing those partitions with largest hook at most $4$.  On the right is the underlying graph, drawn to reveal a five-fold symmetry.}
\label{fig:young}
\end{figure}

In a followup paper~\cite{suter2004abelian}, Suter explained this symmetry by relating this subposet to the weak order on the two-fold dilation of the fundamental alcove in affine type $\widetilde{A}_n$.  The second author was introduced to this symmetry by V.~Reiner and D.~Stanton, and gave a combinatorial generalization in~\cite{williamsbijactions} to accommodate a parameter $m$.  The corresponding geometric interpretation was given by M.~Zabrocki and C.~Berg~\cite{berg2011symmetries} as an $m$-fold dilation of the fundamental alcove.  We refer the reader to~\cite[Section 2]{thomas2014cyclic} for a more detailed history.

To characterize the orbit structure of this cyclic symmetry, the second author defined a map to words under rotation via a general technique called a \emph{bijaction}.  It is not hard to show that this map is a bijection when the parameter $m$ is equal to $2$~\cite{williamsbijactions}, and M.~Visontai was able to invert it for $m=3$~\cite{visontaibijection}.  The general case was resistant to all attack---until the authors were introduced at the 2012 Combinatorial Algebra meets Algebraic Combinatorics conference at the Universit\'e du Qu\'ebec \`a Montr\'eal.  Our joint paper~\cite{thomas2014cyclic} resulted.

\medskip

Unexpectedly, it turns out that inverting the general sweep map and inverting the bijaction to describe the symmetry of the fundamental alcove in type $\widetilde{A}_n$ are essentially the same problem.

\section{Presweeping and Its Inverse}
\label{sec:modular_presweep}

We will factor the modular sweep map as the composition of two maps: the modular presweep map and the forgetful map.  In this section, we define the modular presweep map and its inverse.

\subsection{Notation}
Adhering to the notation in~\cite{thomas2014cyclic}, we prefer to think of the modular levels from the introduction as partitioning the word $\uu$ into blocks.  Define a \defn{partitioned word} for $\uu \in \Am$ to be a partition $\uu^*$ of $\uu$ into $m$ words $\uu^*=\bb_{m-1}|\bb_{m-2}|\cdots|\bb_0$---where we use the \defn{block divider} symbol $|$ to separate the blocks---so that $\uu=\bb_{m-1}\cdots \bb_0$ is their concatenation.  We call the word $\bb_k$ the $k$th \defn{block} and, with apologies to the combinatorics of words community, we write $\Am^*$ for the set of all partitioned words of $\Am$.  We call $\uu$ the \defn{underlying word} of the partitioned word $\uu^*$.  We may use either the symbol $\cdot$ or $\emptyset$ to denote an empty block.   If the $i$-th letter $\uu_i$ of $\uu$ belongs to the $k$-th block $\bb_k$ in the partitioned word $\uu^*$, we let $\block(\uu^*,i):=k$.  We fix the notation $|\uu|:=\sum_{i=1}^{N}\uu_i$ and $|\uu|_m = \ell_N = |\uu|\mod m$.

\subsection{The Modular Presweep Map}
\label{sec:mod_presweep}
The \defn{modular presweep map} is the function $\swp:\Am \hookrightarrow \Am^*$ that sorts $\w \in \Am$ into blocks according to its levels.  Precisely, for $k=m-1,\ldots,2,1,0$, first initialize $\uu^*:=\cdot|\cdot|\cdots|\cdot$ to be the empty partitioned word, then read $\w$ from right to left and append to $\bb_k$ all letters $\w_j$ whose level $\ell_j=\left(\sum_{i=1}^j \w_i \mod m\right)$ is equal to $k$.  In other words, $\uu^*_k$ is obtained by extracting all letters of level $k$ from $\uu$ and reversing their relative order.  Pseudo-code for $\swp$ is given in~\Cref{map:forward}.

\begin{algorithm}[htbp]
\KwIn{A word $\w = \w_1 \w_2 \cdots \w_{N} \in \Am$.}
\KwOut{A partitioned word $\uu^*=\bb_{m-1}|\bb_{m-2}|\cdots|\bb_0 \in \Am^*$.}
Let $\uu^*:=\cdot|\cdot|\cdots|\cdot$\;
\For{$k=m-1$ \KwTo $0$}{
\For{$j=N$ \KwTo $1$}{
 \If{$\left(\sum_{i=1}^j \w_i \mod m\right)=k$}{Append $\w_j$ to $\bb_k$\;}
    }
}
Return $\uu^*$\;
\caption{$\swp:\Am \hookrightarrow \Am^*$.}
\label{map:forward}
\end{algorithm}

\begin{example}
As in \Cref{ex:zeta_intro}, let $m=5$, $N=7$, and $\w = 3113214 \in \Am.$  We compute the modular levels of a word $\w \in A$ by summing the initial letters of $\w$ modulo $m$ (below, left).  We compute the modular presweep of $\w$ by sorting by levels, reading $\w$ from right to left.  Placing letters with the same level in a block, we obtain the corresponding partitioned word $\uu^*:=\swp(\w)$ in $\Am^*$ (below, right).

\[\begin{array}{rccccccc}
\ell:&3&4&0&3&0&1&0 \\
\w:&3&1&1&3&2&1&4
\end{array} \raisebox{0.2em}{$\xmapsto[\swp]{}$} \begin{array}{rc|cc|c|c|ccc}
\ell: & 4&3&3&2&1&0&0&0\\
\uu^*:& 1&3&3&\cdot&1&4&2&1
\end{array}\]
\label{ex:zeta}
\end{example}
\begin{example}
To further illustrate our algorithms in the case that $|\w|_m\neq 0$, we will use the running example of the word $\w = 2314341 \in \Am$ for $m=5$ and $N=7$.  In the same way as~\Cref{ex:zeta}, we have

\[\begin{array}{rccccccc}
\ell:&2&0&1&0&3&2&3\\
\w:&2&3&1&4&3&4&1
\end{array} \text{ and } \begin{array}{rc|cc|cc|c|cc}
\ell: & 4&3&3&2&2&1&0&0\\
\uu^*:& \cdot&1&3&4&2&1&4&3
\end{array}.\]





\label{ex:zeta2}
\end{example}

\subsection{The Inverse Modular Presweep Map}
\label{sec:inv_modular_presweep}
The \defn{inverse modular presweep map} is the function $\iswp:\Am^*\to\Am$ such that \[\iswp \circ \swp = {\sf id}_{\Am},\] where ${\sf id}_{\Am}(\w)=\w$ is the identity function on $\Am$.  As explained in~\cite[Section 5.2]{armstrong2014sweep} and in~\cite[Algorithm 2 and Figure 8]{thomas2014cyclic}, if we know how to associate the correct levels to $\uu:=\sw(\w)$, it is easy to reconstruct $\w$.

\begin{algorithm}[htbp]
\KwIn{A partitioned word $\uu^*=\bb_{m-1}|\bb_{m-2}|\cdots|\bb_0 \in \Am^*$.}
\KwOut{A word $\w = \w_1 \w_2 \cdots \w_{N} \in \Am$ or a subword of $\uu^*$.}
Let $\ell_{N}:=\left(\sum_{j=1}^{N} \uu_j \mod m\right)$ and $\w:=\emptyset$\;
\For{$i=1$ \KwTo $N$}{
 \If{$\bb_{\ell_{N-i+1}} \neq \emptyset$}{Remove the first letter of $\bb_{\ell_{N-i+1}}$ and assign it to $\w_{N-i+1}$\;
 Prepend $\w_{N-i+1}$ to $\w$\;
 Let $\ell_{N-i}:=\left(\ell_{N-i+1}-\w_{N-i+1} \mod m\right)$\;}
 \Else{Return $\uu^*$}
    }
Return $\w$\;
\caption{$\iswp:\Am^* \hookrightarrow \Am$.}
\label{map:backwards}
\end{algorithm}

Suppose we have the partitioned word $\uu^*:=\swp(\w)$.  Since the letters of $\w$ were just rearranged to make $\uu^*$, we can determine $\ell_{N}=|\w|_m=|\uu|_m$ from $\uu^*$.  As we swept $\w$ from right to left, the last letter of $\w$ is therefore the first letter in $\bb_{\ell_{N}}$.  Remove this letter from $\uu^*$.
Subtracting this letter from $\ell_{N}$ gives $\ell_{N-1}$, and we obtain the $(N-1)$st letter of $\w$ as the first remaining letter in block $\ell_{N-1}$.  In general, for $i=1,2,\ldots,N$ suppose we have already recovered the last $i-1$ letters of $\w$, removed them from $\uu^*$, and computed $\ell_{N-i+1}$; subtracting the leftmost remaining letter in $\bb_{\ell_{N-i+1}}$ from $\ell_{N-i+1}$ (and removing it from $\bb_{\ell_{N-i+1}}$) gives $\ell_{N-i}$ and recovers $\w_{N-i+1}$.  Pseudo-code for $\iswp$ is given in~\Cref{map:backwards}.

We say that~\Cref{map:backwards} \defn{succeeds} on a partitioned word $\uu^*$ if it returns an element of $\Am$, and we say that it \defn{fails} if it returns an element of $\Am^*$.  Since~\Cref{map:backwards} undoes~\Cref{map:forward} one step at a time, we conclude that $\iswp$ is the left inverse of $\swp$.  Recall that a partitioned word is called successful if it is in the image of the modular presweep map.  Clearly, the word $\uu^*$ is successful precisely if~\Cref{map:backwards} succeeds on it.

\begin{example}
\label{ex:successful_zeta}
To reverse~\Cref{ex:zeta,ex:zeta2},
we compute using~\Cref{map:backwards} as follows (the computation for~\Cref{ex:zeta} is on the left, while the computation for~\Cref{ex:zeta2} is on the right).  Starting with the partitioned words
\vspace{-1em}
\begin{multicols}{2}
\[\begin{array}{rc|cc|c|c|ccc}
\ell: & 4&3&3&2&1&0&0&0\\
\uu^*:& 1&3&3&\cdot&1&4&2&1
\end{array},\]
we find $\ell_N=|\uu|_m=0.$   We iterate:
\[\begin{array}{c|c|c|c}
i&\uu^*&\ell_{N-i}&\w \\ \hline
0&1|33|\cdot|1|\dot{4}21 & 0 & \cdot \\
1&1|33|\cdot|\dot{1}|\textcolor{lightgray}{4}21 & 1 & 4 \\
2&1|33|\cdot|\textcolor{lightgray}{1}|\textcolor{lightgray}{4}\dot{2}1 & 0 & 14 \\
3&1|\dot{3}3|\cdot|\textcolor{lightgray}{1}|\textcolor{lightgray}{42}1 & 3 & 214 \\
4&1|\textcolor{lightgray}{3}3|\cdot|\textcolor{lightgray}{1}|\textcolor{lightgray}{42}\dot{1} & 0 & 3214 \\
5&\dot{1}|\textcolor{lightgray}{3}3|\cdot|\textcolor{lightgray}{1}|\textcolor{lightgray}{421} & 4 & 13214 \\
6&\textcolor{lightgray}{1}|\textcolor{lightgray}{3}\dot{3}|\cdot|\textcolor{lightgray}{1}|\textcolor{lightgray}{421} & 3 & 113214 \\
7&\textcolor{lightgray}{1}|\textcolor{lightgray}{33}|\cdot|\textcolor{lightgray}{1}|\textcolor{lightgray}{421} & 0 & 3113214
\end{array}\]
Comparing with~\Cref{ex:zeta}, we see that we have recovered $\w$.

\columnbreak
\[\begin{array}{rc|cc|cc|c|cc}
\ell: & 4&3&3&2&2&1&0&0\\
\uu^*:& \cdot&1&3&4&2&1&4&3
\end{array},\]
we find $\ell_N=|\uu|_m=3.$  We iterate:
\[\begin{array}{c|c|c|c}
i&\uu^*&\ell_{N-i}&\w \\ \hline
0&\cdot|\dot{1}3|42|1|43 & 3 & \cdot \\
1&\cdot|\textcolor{lightgray}{1}3|\dot{4}2|1|43 & 2 & 1 \\
2&\cdot|\textcolor{lightgray}{1}\dot{3}|\textcolor{lightgray}{4}2|1|43 & 3 & 41 \\
3&\cdot|\textcolor{lightgray}{13}|\textcolor{lightgray}{4}2|1|\dot{4}3 & 0 & 341 \\
4&\cdot|\textcolor{lightgray}{13}|\textcolor{lightgray}{4}2|\dot{1}|\textcolor{lightgray}{4}3 & 1 & 4341 \\
5&\cdot|\textcolor{lightgray}{13}|\textcolor{lightgray}{4}2|\textcolor{lightgray}{1}|\textcolor{lightgray}{4}\dot{3} & 0 & 14341 \\
6&\cdot|\textcolor{lightgray}{13}|\textcolor{lightgray}{4}\dot{2}|\textcolor{lightgray}{1}|\textcolor{lightgray}{43} & 2 & 314341 \\
7&\cdot|\textcolor{lightgray}{13}|\textcolor{lightgray}{42}|\textcolor{lightgray}{1}|\textcolor{lightgray}{43} & 0 & 2314341
\end{array}\]
Comparing with~\Cref{ex:zeta2}, we see that we have recovered $\w$.
\end{multicols}
\end{example}

\begin{example}
\label{ex:unsuccessful_zeta}
Using different partitioned words for the underlying words in~\Cref{ex:zeta,ex:zeta2}, we give two examples of when~\Cref{map:backwards} fails.
\vspace{-1em}
\begin{multicols}{2}

\[\begin{array}{c|c|c|c}
i&\uu^*&\ell_{N-i}&\w \\ \hline
0&13|31|4|2|\dot{1} & 0 & \cdot \\
1&\dot{1}3|31|4|2|\textcolor{lightgray}{1} & 4 & 1 \\
2&\textcolor{lightgray}{1}3|\dot{3}1|4|2|\textcolor{lightgray}{1} & 3 & 11 \\
3&\textcolor{lightgray}{1}3|\textcolor{lightgray}{3}1|4|2|\textcolor{lightgray}{1} & 0 & 311
\end{array}\]


\[\begin{array}{c|c|c|c}
i&\uu^*&\ell_{N-i}&\w \\ \hline
0&1|\dot{3}4|21|4|3 & 3 & \cdot \\
1&1|\textcolor{lightgray}{3}4|21|4|\dot{3} & 0 & 3 \\
2&1|\textcolor{lightgray}{3}4|\dot{2}1|4|\textcolor{lightgray}{3} & 2 & 33 \\
3&1|\textcolor{lightgray}{3}4|\textcolor{lightgray}{2}1|4|\textcolor{lightgray}{3} & 0 & 233
\end{array}\]
\end{multicols}

\Cref{map:backwards} returns the remaining letters of $\uu^*$.  The blocks of this partitioned word are suffixes of the blocks of the original $\uu^*$.
\end{example}

\subsection{Forgetting}
\label{sec:forget}
We now obtain the modular sweep map from the modular pre\-sweep map by forgetting the information of the blocks.  The \defn{forgetful map} is the function \begin{align*}\f:\Am^* &\to \Am \\ \f\left(\bb_{m-1}|\bb_{m-2}|\cdots|\bb_0\right)&=\bb_{m-1}\bb_{m-2}\cdots\bb_0\end{align*} obtained by concatenating all the blocks of $\uu^* \in \Am^*$.  Thus, the modular sweep map of~\Cref{sec:intro} may be written as the composition \[\sw = \left(\f \circ \swp\right): \Am \to \Am.\]

\begin{example}
Continuing with~\Cref{ex:zeta}, we forget the partitioning to obtain the modular sweep $\uu:=\sw(\w)$ of $\w$ to be
\begin{align*}(\f \circ \swp)(3113124)&=\f\left(1|33|\cdot|1|421\right) = 1331421.\end{align*}

Similarly, for~\Cref{ex:zeta2}, we obtain
\begin{align*}(\f \circ \swp)(2314341)&=\f\left(\cdot|13|42|1|43\right) = 1342143.\end{align*}
\end{example}

Thus, the problem of inverting the modular sweep map has been reduced to showing that there exists a unique successful partition $\uu^* \in \Am^*$ for each word $\uu \in \Am$.  We do this in the next section.

\section{Equitable Partitions and the Successful Partition}
\label{sec:remember}
We already solved the problem of constructing the successful partition in~\cite{thomas2014cyclic}, where we studied a composition \[{f} \circ p,\] where $f$ is the map $\f$ and $p$ is a map \emph{very slightly} different from $\swp$ (see~\Cref{sec:diffs}).  In particular, our notions here of a successful partition and the forgetful map coincide with those in~\cite{thomas2014cyclic}.

Our algorithm for inverting $\f$ uses the notion of  \emph{equitable partitions}, and is considerably streamlined using an idea of F.~Aigner, C.~Ceballos, and R.~Sulzgruber and the helpful suggestions of an anonymous referee.



\subsection{The Poset of Equitable Partitions}\label{sec:equ_and_succ}
 We expand a partitioned word $\uu^*$ into an $N \times m$ \defn{balancing array} \[M^{\uu^*}=(M^{\uu^*}_{i,j})_{\substack{1\leq i \leq N \\ m-1\geq j \geq 0}}\] defined by
\[M^{\uu^*}_{i,j}:=\begin{cases} \bmone & \text{if } j \in \{ \block(\uu^*,i),\block(\uu^*,i)-1,\ldots,\block(\uu^*,i)-\uu_i+1 \} \mod m, \\ \bmzero & \text{otherwise,} \end{cases}\]

Write $|\uu| = \sum_{i=1}^N u_i = q m +r$ with $0 \leq r < m$.  We say that column $j$ (for $m-1 \geq j \geq 0$) of $M^{\uu^*}$ is \defn{equitably filled} if:
\begin{itemize}
\item $r \geq j \geq 1$ and column $j$ has $q+1$ copies of the symbol $\bmone$, or
\item $j=0$ or $j>r$, and column $j$ contains $q$ copies of $\bmone$.
\end{itemize}

In particular, if $r=0$, then every equitably filled column has $q$ copies of $\bmone$.  We say that $\uu^*$ is an \defn{equitable partition} if each of the columns of $M^{\uu^*}$ is equitably filled.  If a column has (strictly) fewer copies of the symbol $\bmone$ than it would to be equitably filled, we say it is \defn{less than equitably filled}; similarly, when a column has (strictly) more copies of $\bmone$ we say that it is \defn{more than equitably filled}.    

The notion of equitable partitions generalizes the ``real world'' example given in~\Cref{sec:real_world}.  The difference is that we no longer make the simplifying assumption that $|\uu|$ is divisible by
$m$.  Examples of balancing arrays are given in~\Cref{ex:successful_def,ex:successful_def2} for the partitioned words shown in~\Cref{ex:componentwise}.  Note that the labeling of columns has changed from~\Cref{ex:ex1}.

  The motivation for the definition of equitable partitions is the following lemma.

\begin{lemma}
\label{lem:succ_equ}
Any successful partition $\uu^*$ is an equitable partition.
\end{lemma}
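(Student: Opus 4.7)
The plan is to identify each $\bmone$-entry of the balancing array $M^{\uu^*}$ with an integer in $\{1, 2, \ldots, |\uu|\}$ whose residue mod $m$ equals its column index, and then count residues directly. Since $\uu^*$ is successful, Algorithm~\ref{map:backwards} produces a word $\w = \iswp(\uu^*) \in \Am$, and because Algorithm~\ref{map:backwards} undoes Algorithm~\ref{map:forward} step-by-step, we have $\uu^* = \swp(\w)$. Write $L_j := \sum_{i=1}^{j} \w_i$ and $\ell_j := L_j \bmod m$, so $L_N = |\uu|$ and $\ell_N = |\uu|_m$.

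Because $\swp(\w) = \uu^*$, the letter $\w_j$ lies in block $\bb_{\ell_j}$, and the $i$-th letter of $\uu$ corresponds to a unique position $j(i)$ in $\w$, yielding a bijection $\{1,\ldots,N\} \to \{1,\ldots,N\}$ with $\uu_i = \w_{j(i)}$ and $\block(\uu^*, i) = \ell_{j(i)}$.

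I would then set up the key bijection. A $\bmone$-entry of $M^{\uu^*}$ is indexed by a pair $(i, \delta)$ with $0 \leq \delta \leq \uu_i - 1$, sitting in a column cyclically offset by $\delta$ from $\block(\uu^*, i)$. Transporting via $i \mapsto j(i)$ and sending $(j, \delta)$ to the unique integer in $\{L_{j-1}+1, L_{j-1}+2, \ldots, L_j\}$ whose residue mod $m$ matches the corresponding column, one obtains the desired bijection: the intervals $\{L_{j-1}+1, \ldots, L_j\}$ partition $\{1, 2, \ldots, |\uu|\}$ as $j$ varies, and each $\bmone$-entry's column equals the residue mod $m$ of its associated integer.

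From this bijection, the column-$k$ count of $M^{\uu^*}$ equals $|\{L \in \{1,\ldots,|\uu|\} : L \equiv k \pmod{m}\}|$, which by a direct residue count equals $\lfloor |\uu|/m \rfloor + 1$ when $k \in \{1, \ldots, |\uu|_m\}$ and $\lfloor |\uu|/m \rfloor$ when $k = 0$ or $|\uu|_m < k \leq m-1$. This is the equitable-filling criterion, completing the proof. The main delicate step is checking the cyclic orientation: one needs to verify that as $\delta$ ranges over $\{0, 1, \ldots, \w_j - 1\}$ in row $i$, the resulting columns are precisely the residues of $L_{j-1}+1, L_{j-1}+2, \ldots, L_j$ mod $m$, which is a short computation using $\w_j \equiv \ell_j - \ell_{j-1} \pmod{m}$. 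Beyond this, the argument is elementary counting.
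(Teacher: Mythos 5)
Your argument is correct, and it takes a genuinely different route from the paper's. The paper proves this lemma by citing the inductive construction of all successful partitions as the nodes of an $m$-ary tree $\mathcal{T}^*_m$ from the earlier paper \cite{thomas2014cyclic}, observing that the prepending rule used to build the tree visibly preserves equitability. You instead argue directly: a successful $\uu^*$ is $\swp(\w)$ for $\w=\iswp(\uu^*)$ (which does require the small check that Algorithm~\ref{map:backwards}, when it succeeds, places each letter $\w_j$ in block $\ell_j=\sum_{i\le j}\w_i \bmod m$ and in the right order within blocks --- this holds), and then the $\bmone$'s of $M^{\uu^*}$ biject with $\{1,\dots,|\uu|\}$ so that column counts become residue counts, which are exactly the equitable quantities. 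This is self-contained, and in fact proves the stronger statement that \emph{every} partition in the image of $\swp$ is equitable, which is conceptually the real content here. One caution on your flagged ``cyclic orientation'' step: it does work out, but only with the orientation in which row $i$ occupies columns $\block(\uu^*,i), \block(\uu^*,i)-1,\dots,\block(\uu^*,i)-\uu_i+1 \pmod m$; this is the convention forced by the paper's examples and by the task-scheduling picture (column index decreases as time advances), whereas the displayed formula for $M^{\uu^*}$ is written with $+$ signs. With the $-$ orientation, row $i$ (corresponding to $\w_j$) covers precisely the residues of $L_{j-1}+1,\dots,L_j$, the intervals tile $\{1,\dots,|\uu|\}$, and your count goes through; with the literal $+$ orientation the intervals $\{L_j,\dots,L_j+\w_j-1\}$ would not tile and the argument would not close, so you should state the convention explicitly rather than leave it as an unexamined ``short computation.''
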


\begin{proof}
We can construct all successful partitions as follows~\cite[Definition 7.5]{thomas2014cyclic}.  Define an infinite complete $m$-ary tree $\mathcal{T}_m^*$ by
\begin{enumerate}
	\item The zeroth rank consists of the empty successful partition $\uu^*$, given by $\bb_k=\emptyset$ for $m-1 \geq k \geq 0$.
	\item The children of a successful partition $\uu^*=\bb_{m-1}|\ldots|\bb_0$ are the $m$ successful partitions obtained by prepending $i \pmod m$ to $\bb_{i+|\uu|_m \mod m}$ for $i=0,1,2,\ldots,m-1$.
\end{enumerate}

These children are also successful partitions because the addition of the new letter $i$ to $\bb_{i+|\uu|_m}$ changes the sum $|\uu^*|_m$ so that the first letter read in \Cref{map:backwards} is this new letter; \Cref{map:backwards} then reads the remaining letters of the successful parent $\uu^*$ in the same order as before the letter $i$ was added.  In particular, if $\w$ is the word whose image under the modular presweep map is a node $\uu^*$, then the children of $\uu^*$ are characterized as those words obtained by adding one additional letter to the end of $\w$ and applying $\swp$ to the resulting words.  An example of a node in $\mathcal{T}^*_5$ with its five children is given on the right of~\Cref{ex:tree}.  The corresponding words under \Cref{map:backwards} are illustrated on the left of~\Cref{ex:tree}.

Then it is easy to see that all partitioned words in $\mathcal{T}^*_m$ are equitable, and that all successful partitioned words appear in $\mathcal{T}^*_m$~\cite[Lemma 7.2]{thomas2014cyclic}. (It is not yet clear that the images of the words in $\mathcal{T}^*_m$ under the forgetful map are actually distinct.)
\end{proof}

\begin{example}
\label{ex:tree}
Below on the right is the node $\uu^*=\cdot|13|42|1|43$ in $\mathcal{T}^*_5$ with its five children.  For each child, the new letter added to $\uu^*$ is indicated in bold.  On the left, arranged in a similar tree structure, are the words whose images under the modular presweep map give $\uu^*$ and its children.  Observe on the left-hand side that the new letters are simply appended to the word.  On the right-hand side, new letters are prepended to blocks, and then seem to be lost in the middle of the word when the partition is forgotten.

\[
\raisebox{-0.5\height}{\scalebox{0.7}{
\begin{tikzpicture}
\draw (1,1) -- (0,0);
\draw (0,0) --(-3,-1);
\draw (0,0) --(-1.5,-1.4);
\draw (0,0) --(0,-1.8);
\draw (0,0) --(1.5,-2.2);
\draw (0,0) --(3,-2.6);
\node [fill=white!20,rounded corners=2pt,inner sep=1pt] at (0,0) {$2314341$};
\node [fill=white!20,rounded corners=2pt,inner sep=1pt] at (-3,-1) {$2314341\mathbf{0}$};
\node [fill=white!20,rounded corners=2pt,inner sep=1pt] at (-1.5,-1.4) {$2314341\mathbf{1}$};
\node [fill=white!20,rounded corners=2pt,inner sep=1pt] at (0,-1.8) {$2314341\mathbf{2}$};
\node [fill=white!20,rounded corners=2pt,inner sep=1pt] at (1.5,-2.2) {$2314341\mathbf{3}$};
\node [fill=white!20,rounded corners=2pt,inner sep=1pt] at (3,-2.6) {$2314341\mathbf{4}$};
\end{tikzpicture}}} \xmapsto[\swp]{}
\raisebox{-0.5\height}{\scalebox{0.7}{
\begin{tikzpicture}
\draw (1,1) -- (0,0);
\draw (0,0) --(-3,-1);
\draw (0,0) --(-1.5,-1.4);
\draw (0,0) --(0,-1.8);
\draw (0,0) --(1.5,-2.2);
\draw (0,0) --(3,-2.6);
\node [fill=white!20,rounded corners=2pt,inner sep=1pt] at (0,0) {$\cdot|13|42|1|43$};
\node [fill=white!20,rounded corners=2pt,inner sep=1pt] at (-3,-1) {$\cdot|\mathbf{0}13|42|1|43$};
\node [fill=white!20,rounded corners=2pt,inner sep=1pt] at (-1.5,-1.4) {$\mathbf{1}|13|42|1|43$};
\node [fill=white!20,rounded corners=2pt,inner sep=1pt] at (0,-1.8) {$\cdot|13|42|1|\mathbf{2}43$};
\node [fill=white!20,rounded corners=2pt,inner sep=1pt] at (1.5,-2.2) {$\cdot|13|42|\mathbf{3}1|43$};
\node [fill=white!20,rounded corners=2pt,inner sep=1pt] at (3,-2.6) {$\cdot|13|\mathbf{4}42|1|43$};
\end{tikzpicture}}}\]
\end{example}


We now define the \defn{componentwise order} on the set of all equitable partitions of a fixed word $\uu$.

\begin{definition}
Fix $\uu \in \Am$.  Let $\uu^*$ and $\vv^*$ be two equitable partitioned words of $\uu$ and define a partial order $\uu^*\leq \vv^{*}$ if and only if $\block(\uu^*,i)\geq \block(\vv^{*},i)$ for all $1 \leq i \leq N$.  We denote the resulting poset on all equitable partitioned words of $\uu$ by $\equ(\uu)$.
\end{definition}

\begin{example}
\label{ex:componentwise}
Fix $m=5$.  The posets $\equ(1331421)$ and $\equ(1342143)$ appear below, and should be compared with the balancing arrays in~\Cref{ex:successful_def,ex:successful_def2}.  The maximum element is what we will later call in~\Cref{def:rightmost} the rightmost equitable partition; the minimum element is what we will later call in~\Cref{def:leftmost} the leftmost equitable partition.  For each equitable partition, the letters read by~\Cref{map:backwards} are decorated with a dot.  The underlinings and wavy underlinings should be ignored for the moment---they indicate letters of minimal left balanced block suffixes, as defined in~\Cref{def:block_suffix}.


\begin{center}
\raisebox{-0.5\height}{\begin{tikzpicture}
\draw (0,0) --(-1.5,1) --(0,2) --(0,3);
\draw (0,0) --(1.5,1) --(0,2);
\node [fill=white!20,rounded corners=2pt,inner sep=1pt] at (0,0) {$\rightmost(\uu)=\dot{1}\uwave{3}|\dot{3}\underline{1}|\underline{4}|\uwave{2}|\dot{1}$};
\node [fill=white!20,rounded corners=2pt,inner sep=1pt] at (0,2) {$\dot{1}|\dot{3}\dot{3}|\underline{1}|\underline{4}|\dot{2}\dot{1}$};
\node [fill=white!20,rounded corners=2pt,inner sep=1pt] at (0,3) {$\invf(\uu)=\dot{1}|\dot{3}\dot{3}|\cdot|\dot{1}|\dot{4}\dot{2}\dot{1}$};
\node [fill=white!20,rounded corners=2pt,inner sep=1pt] at (-1.5,1) {$\dot{1}|\dot{3}\dot{3}\underline{1}|\underline{4}|\cdot|\dot{2}\dot{1}$};
\node [fill=white!20,rounded corners=2pt,inner sep=1pt] at (1.5,1) {$\dot{1}\uwave{3}|\dot{3}|1|4\uwave{2}|\dot{1}$};
\end{tikzpicture}}\hfill
\raisebox{-0.5\height}{\begin{tikzpicture}
\draw (0,-2) -- (0,-1) -- (0,0) --(-1.5,1) --(0,2);
\draw (0,0) --(1.5,1) --(0,2);
\node [fill=white!20,rounded corners=2pt,inner sep=1pt] at (0,-2) {$\rightmost(\uu)=1|\dot{3}42\underline{1}|\underline{4}|3|\cdot$};
\node [fill=white!20,rounded corners=2pt,inner sep=1pt] at (0,0) {$\underline{1}|\dot{3}\underline{4}|\dot{2}\uwave{1}|\uwave{4}|\dot{3}$};
\node [fill=white!20,rounded corners=2pt,inner sep=1pt] at (0,-1) {$1|\dot{3}4\underline{2}|1|4\underline{3}|\cdot$};
\node [fill=white!20,rounded corners=2pt,inner sep=1pt] at (0,2) {$\invf(\uu)=\cdot|\dot{1}\dot{3}|\dot{4}\dot{2}|\dot{1}|\dot{4}\dot{3}$};
\node [fill=white!20,rounded corners=2pt,inner sep=1pt] at (-1.5,1) {$\underline{1}|\dot{3}\underline{4}|\dot{2}|\dot{1}|\dot{4}\dot{3}$};
\node [fill=white!20,rounded corners=2pt,inner sep=1pt] at (1.5,1) {$\cdot|\dot{1}\dot{3}|\dot{4}\dot{2}\uwave{1}|\uwave{4}|\dot{3}$};
\end{tikzpicture}}
\end{center}
\end{example}

\begin{example}
Fix $m=5$.  The balancing arrays for the poset $\equ(1331421)$ are arranged to correspond with the diagram on the left of~\Cref{ex:componentwise}.  Since $|\uu|=3 \cdot 5 + 0$, every equitable filling has three copies of $\bmone$ in each column $j$.


\begin{center}
\scalebox{0.8}{
\begin{tikzpicture}[scale=3.5]
\draw (0,0) --(-1.5,1) --(0,2) --(0,4);
\draw (0,0) --(1.5,1) --(0,2);
\node [fill=white!20,rounded corners=2pt,inner sep=1pt] at (0,0) {$\begin{array}{cc|ccccc}
& & \multicolumn{5}{c}{{j}} \\
& & 4 & 3& 2& 1& 0\\ \hline
\multirow{7}{*}{{$i$}}
&1& \bmoner & \bmzero & \bmzero & \bmzero & \bmzero \\
&2&\bmoner & \bmone & \bmone & \bmzero & \bmzero \\
&3&\bmzero & \bmoner & \bmone & \bmone & \bmzero\\
&4&\bmzero & \bmoner & \bmzero & \bmzero & \bmzero \\
&5&\bmone & \bmzero & \bmoner & \bmone & \bmone \\
&6&\bmzero & \bmzero & \bmzero & \bmoner & \bmone \\
&7&\bmzero & \bmzero & \bmzero & \bmzero & \bmoner
\end{array}$};
\node [fill=white!20,rounded corners=2pt,inner sep=1pt] at (0,2) {$\begin{array}{cc|ccccc}
& & \multicolumn{5}{c}{{j}} \\
& & 4 & 3& 2& 1& 0\\ \hline
\multirow{7}{*}{{$i$}}
&1& \bmoner & \bmzero & \bmzero & \bmzero & \bmzero \\
&2&\bmzero & \bmoner & \bmone & \bmone & \bmzero \\
&3&\bmzero & \bmoner & \bmone & \bmone & \bmzero\\
&4&\bmzero & \bmzero & \bmoner & \bmzero & \bmzero \\
&5&\bmone & \bmone & \bmzero & \bmoner & \bmone \\
&6&\bmone & \bmzero & \bmzero & \bmzero & \bmoner \\
&7&\bmzero & \bmzero & \bmzero & \bmzero & \bmoner
\end{array}$};
\node [fill=white!20,rounded corners=2pt,inner sep=1pt] at (0,3.5) {$\begin{array}{cc|ccccc}
& & \multicolumn{5}{c}{{j}} \\
& & 4 & 3& 2& 1& 0\\ \hline
\multirow{7}{*}{{$i$}}
&1& \bmoner & \bmzero & \bmzero & \bmzero & \bmzero \\
&2&\bmzero & \bmoner & \bmone & \bmone & \bmzero \\
&3&\bmzero & \bmoner & \bmone & \bmone & \bmzero\\
&4&\bmzero & \bmzero & \bmzero & \bmoner & \bmzero \\
&5&\bmone & \bmone & \bmone & \bmzero & \bmoner \\
&6&\bmone & \bmzero & \bmzero & \bmzero & \bmoner \\
&7&\bmzero & \bmzero & \bmzero & \bmzero & \bmoner
\end{array}$};
\node [fill=white!20,rounded corners=2pt,inner sep=1pt] at (-1.5,1) {$\begin{array}{cc|ccccc}
& & \multicolumn{5}{c}{{j}} \\
& & 4 & 3& 2& 1& 0\\ \hline
\multirow{7}{*}{{$i$}}
&1& \bmoner & \bmzero & \bmzero & \bmzero & \bmzero \\
&2&\bmzero & \bmoner & \bmone & \bmone & \bmzero \\
&3&\bmzero & \bmoner & \bmone & \bmone & \bmzero\\
&4&\bmzero & \bmoner & \bmzero & \bmzero & \bmzero \\
&5&\bmone & \bmzero & \bmoner & \bmone & \bmone \\
&6&\bmone & \bmzero & \bmzero & \bmzero & \bmoner \\
&7&\bmzero & \bmzero & \bmzero & \bmzero & \bmoner
\end{array}$};
\node [fill=white!20,rounded corners=2pt,inner sep=1pt] at (1.5,1) {$\begin{array}{cc|ccccc}
& & \multicolumn{5}{c}{{j}} \\
& & 4 & 3& 2& 1& 0\\ \hline
\multirow{7}{*}{{$i$}}
&1& \bmoner & \bmzero & \bmzero & \bmzero & \bmzero \\
&2&\bmoner & \bmone & \bmone & \bmzero & \bmzero \\
&3&\bmzero & \bmoner & \bmone & \bmone & \bmzero\\
&4&\bmzero & \bmzero & \bmoner & \bmzero & \bmzero \\
&5&\bmone & \bmone & \bmzero & \bmoner & \bmone \\
&6&\bmzero & \bmzero & \bmzero & \bmoner & \bmone \\
&7&\bmzero & \bmzero & \bmzero & \bmzero & \bmoner
\end{array}$};
\end{tikzpicture}}
\end{center}
\label{ex:successful_def}
\end{example}

\begin{example}
\label{ex:successful_def2}
Fix $m=5$.  The balancing arrays for the poset $\equ(1342143)$ are arranged to correspond with the diagram on the right of~\Cref{ex:componentwise}.   Since $|\uu|=3\cdot 5 +3$, any equitable filling has three copies of $\bmone$ in columns $0$ and $4$, and four copies of $\bmone$ in columns $1$, $2$, and $3$.

\begin{center}
\scalebox{0.8}{\begin{tikzpicture}[scale=3.5]
\draw (0,-3) -- (0,-1.5) -- (0,0) --(-1.5,1) --(0,2);
\draw (0,0) --(1.5,1) --(0,2);
\node [fill=white!20,rounded corners=2pt,inner sep=1pt] at (0,-3) {$\begin{array}{cc|ccccc}
& & \multicolumn{5}{c}{{j}} \\
& & 4 & 3& 2& 1& 0\\ \hline
\multirow{7}{*}{{$i$}}
&1& \bmoner & \bmzero & \bmzero & \bmzero & \bmzero \\
&2&\bmzero & \bmoner & \bmone & \bmone & \bmzero \\
&3&\bmzero & \bmoner & \bmone & \bmone & \bmone\\
&4&\bmzero & \bmoner & \bmone & \bmzero & \bmzero \\
&5&\bmzero & \bmoner & \bmzero & \bmzero & \bmzero \\
&6&\bmone & \bmzero & \bmoner & \bmone & \bmone \\
&7&\bmone & \bmzero & \bmzero & \bmoner & \bmone
\end{array}$};
\node [fill=white!20,rounded corners=2pt,inner sep=1pt] at (0,0) {$\begin{array}{cc|ccccc}
& & \multicolumn{5}{c}{{j}} \\
& & 4 & 3& 2& 1& 0\\ \hline
\multirow{7}{*}{{$i$}}
&1& \bmoner & \bmzero & \bmzero & \bmzero & \bmzero \\
&2&\bmzero & \bmoner & \bmone & \bmone & \bmzero \\
&3&\bmzero & \bmoner & \bmone & \bmone & \bmone\\
&4&\bmzero & \bmzero & \bmoner & \bmone & \bmzero \\
&5&\bmzero & \bmzero & \bmoner & \bmzero & \bmzero \\
&6&\bmone & \bmone & \bmzero & \bmoner & \bmone \\
&7&\bmone & \bmone & \bmzero & \bmzero & \bmoner
\end{array}$};
\node [fill=white!20,rounded corners=2pt,inner sep=1pt] at (0,-1.5) {$\begin{array}{cc|ccccc}
& & \multicolumn{5}{c}{{j}} \\
& & 4 & 3& 2& 1& 0\\ \hline
\multirow{7}{*}{{$i$}}
&1& \bmoner & \bmzero & \bmzero & \bmzero & \bmzero \\
&2&\bmzero & \bmoner & \bmone & \bmone & \bmzero \\
&3&\bmzero & \bmoner & \bmone & \bmone & \bmone\\
&4&\bmzero & \bmoner & \bmone & \bmzero & \bmzero \\
&5&\bmzero & \bmzero & \bmoner & \bmzero & \bmzero \\
&6&\bmone & \bmone & \bmzero & \bmoner & \bmone \\
&7&\bmone & \bmzero & \bmzero & \bmoner & \bmone
\end{array}$};
\node [fill=white!20,rounded corners=2pt,inner sep=1pt] at (0,2) {$\begin{array}{cc|ccccc}
& & \multicolumn{5}{c}{{j}} \\
& & 4 & 3& 2& 1& 0\\ \hline
\multirow{7}{*}{{$i$}}
&1& \bmzero & \bmoner & \bmzero & \bmzero & \bmzero \\
&2&\bmzero & \bmoner & \bmone & \bmone & \bmzero \\
&3&\bmone & \bmzero & \bmoner & \bmone & \bmone\\
&4&\bmzero & \bmzero & \bmoner & \bmone & \bmzero \\
&5&\bmzero & \bmzero & \bmzero & \bmoner & \bmzero \\
&6&\bmone & \bmone & \bmone & \bmzero & \bmoner \\
&7&\bmone & \bmone & \bmzero & \bmzero & \bmoner
\end{array}$};
\node [fill=white!20,rounded corners=2pt,inner sep=1pt] at (-1.5,1) {$\begin{array}{cc|ccccc}
& & \multicolumn{5}{c}{{j}} \\
& & 4 & 3& 2& 1& 0\\ \hline
\multirow{7}{*}{{$i$}}
&1& \bmoner & \bmzero & \bmzero & \bmzero & \bmzero \\
&2&\bmzero & \bmoner & \bmone & \bmone & \bmzero \\
&3&\bmzero & \bmoner & \bmone & \bmone & \bmone\\
&4&\bmzero & \bmzero & \bmoner & \bmone & \bmzero \\
&5&\bmzero & \bmzero & \bmzero & \bmoner & \bmzero \\
&6&\bmone & \bmone & \bmone & \bmzero & \bmoner \\
&7&\bmone & \bmone & \bmzero & \bmzero & \bmoner
\end{array}$};
\node [fill=white!20,rounded corners=2pt,inner sep=1pt] at (1.5,1) {$\begin{array}{cc|ccccc}
& & \multicolumn{5}{c}{{j}} \\
& & 4 & 3& 2& 1& 0\\ \hline
\multirow{7}{*}{{$i$}}
&1& \bmzero & \bmoner & \bmzero & \bmzero & \bmzero \\
&2&\bmzero & \bmoner & \bmone & \bmone & \bmzero \\
&3&\bmone & \bmzero & \bmoner & \bmone & \bmone\\
&4&\bmzero & \bmzero & \bmoner & \bmone & \bmzero \\
&5&\bmzero & \bmzero & \bmoner & \bmzero & \bmzero \\
&6&\bmone & \bmone & \bmzero & \bmoner & \bmone \\
&7&\bmone & \bmone & \bmzero & \bmzero & \bmoner
\end{array}$};
\end{tikzpicture}}
\end{center}
\end{example}


\subsection{The Rightmost Equitable Partition}\label{sec:rightmost}

\begin{definition}
\label{def:rightmost}
A \defn{rightmost equitable partition} is a maximal element of $\equ(\uu)$.
\end{definition}
That is, a rightmost equitable partition is an equitable partition $\uu^*$ of $\uu$ such that any other equitable partition $\vv^{*}$ of $\uu$ has $\block(\vv^{*},i)\geq \block(\uu^*,i)$ for all $i$.

As we will show in~\Cref{thm:unique_rightmost},~\Cref{map:leftmost} explicitly constructs the unique rightmost equitable partition.\footnote{\Cref{map:leftmost} was communicated to us by F.~Aigner, C.~Ceballos, and R.~Sulzgruber; although we had suspected the existence of~\Cref{map:leftmost} (roughly dual to our~\Cref{map:rightmost}) from our analogy with stable marriages (see~\Cref{rem:stable_marriages}), we had been unable to produce it.}  \Cref{thm:right_eq_succ} then proves that the unique rightmost equitable partition of $\uu$ is also the unique successful partition of $\uu$.

\begin{algorithm}[htbp]
\KwIn{A word $\uu \in \Am$.}
\KwOut{The rightmost equitable partition $\uu^* \in \Am^*$.}
Set $\uu^*=\cdot|\cdot|\cdots|\uu$\;
\While{$\uu^*$ is not an equitable partition}{
		Let $j$ be the rightmost column of $M^{\uu^*}$ that is less than equitably filled\;
		Delete the leftmost letter of $\bb_{j-1}$ and append it to $\bb_{j}$\;
}
Return $\uu^*$\;
\caption{$\invf: \Am \to \Am^*$.}
\label{map:leftmost}
\end{algorithm}

\begin{example}
We illustrate~\Cref{map:leftmost} applied to the word $\uu=1331421$.  At each step, the rightmost column with less than its equitable filling is highlighted.


\begin{alignat*}{4}\scalebox{0.6}{$\begin{array}{ccc>{\columncolor{red!20}}cc}
\bmzero & \bmzero & \bmzero & \makebox[\widthof{$\bmone$}][c]{$\bmzero$} & \bmoner \\
\bmone & \bmone & \bmzero & \bmzero & \bmoner \\
\bmone & \bmone & \bmzero & \bmzero & \bmoner\\
\bmzero & \bmzero & \bmzero & \bmzero & \bmoner \\
\bmone & \bmone & \bmone & \bmzero & \bmoner \\
\bmone & \bmzero & \bmzero & \bmzero & \bmoner \\
\bmzero & \bmzero & \bmzero & \bmzero & \bmoner
\end{array}$} &&\to \scalebox{0.6}{$\begin{array}{ccc>{\columncolor{red!20}}cc}
\bmzero & \bmzero & \bmzero & \bmoner & \bmzero \\
\bmone & \bmone & \bmzero & \bmzero & \bmoner \\
\bmone & \bmone & \bmzero & \bmzero & \bmoner\\
\bmzero & \bmzero & \bmzero & \bmzero & \bmoner \\
\bmone & \bmone & \bmone & \bmzero & \bmoner \\
\bmone & \bmzero & \bmzero & \bmzero & \bmoner \\
\bmzero & \bmzero & \bmzero & \bmzero & \bmoner
\end{array}$} &&\to \scalebox{0.6}{$\begin{array}{ccc>{\columncolor{red!20}}cc}
\bmzero & \bmzero & \bmzero & \bmoner & \bmzero \\
\bmone & \bmzero & \bmzero & \bmoner & \bmone \\
\bmone & \bmone & \bmzero & \bmzero & \bmoner\\
\bmzero & \bmzero & \bmzero & \bmzero & \bmoner \\
\bmone & \bmone & \bmone & \bmzero & \bmoner \\
\bmone & \bmzero & \bmzero & \bmzero & \bmoner \\
\bmzero & \bmzero & \bmzero & \bmzero & \bmoner
\end{array}$} &&\to \scalebox{0.6}{$\begin{array}{cc>{\columncolor{red!20}}ccc}
\bmzero & \bmzero & \bmzero & \bmoner & \bmzero \\
\bmone & \bmzero & \bmzero & \bmoner & \bmone \\
\bmone & \bmzero & \bmzero & \bmoner & \bmone\\
\bmzero & \bmzero & \bmzero & \bmzero & \bmoner \\
\bmone & \bmone & \bmone & \bmzero & \bmoner \\
\bmone & \bmzero & \bmzero & \bmzero & \bmoner \\
\bmzero & \bmzero & \bmzero & \bmzero & \bmoner
\end{array}$} \to \\
\scalebox{0.6}{$\begin{array}{ccc>{\columncolor{red!20}}cc}
\bmzero & \bmzero & \bmoner & \bmzero & \bmzero \\
\bmone & \bmzero & \bmzero & \bmoner & \bmone \\
\bmone & \bmzero & \bmzero & \bmoner & \bmone\\
\bmzero & \bmzero & \bmzero & \bmzero & \bmoner \\
\bmone & \bmone & \bmone & \bmzero & \bmoner \\
\bmone & \bmzero & \bmzero & \bmzero & \bmoner \\
\bmzero & \bmzero & \bmzero & \bmzero & \bmoner
\end{array}$} &&\to \scalebox{0.6}{$\begin{array}{cc>{\columncolor{red!20}}ccc}
\bmzero & \bmzero & \bmoner & \bmzero & \bmzero \\
\bmone & \bmzero & \bmzero & \bmoner & \bmone \\
\bmone & \bmzero & \bmzero & \bmoner & \bmone\\
\bmzero & \bmzero & \bmzero & \bmoner & \bmzero \\
\bmone & \bmone & \bmone & \bmzero & \bmoner \\
\bmone & \bmzero & \bmzero & \bmzero & \bmoner \\
\bmzero & \bmzero & \bmzero & \bmzero & \bmoner
\end{array}$} &&\to \scalebox{0.6}{$\begin{array}{c>{\columncolor{red!20}}cccc}
\bmzero & \bmzero & \bmoner & \bmzero & \bmzero \\
\bmzero & \bmzero & \bmoner & \bmone & \bmone \\
\bmone & \bmzero & \bmzero & \bmoner & \bmone\\
\bmzero & \bmzero & \bmzero & \bmoner & \bmzero \\
\bmone & \bmone & \bmone & \bmzero & \bmoner \\
\bmone & \bmzero & \bmzero & \bmzero & \bmoner \\
\bmzero & \bmzero & \bmzero & \bmzero & \bmoner
\end{array}$} &&\to \scalebox{0.6}{$\begin{array}{cc>{\columncolor{red!20}}ccc}
\bmzero & \bmoner & \bmzero & \bmzero & \bmzero \\
\bmzero & \bmzero & \bmoner & \bmone & \bmone \\
\bmone & \bmzero & \bmzero & \bmoner & \bmone\\
\bmzero & \bmzero & \bmzero & \bmoner & \bmzero \\
\bmone & \bmone & \bmone & \bmzero & \bmoner \\
\bmone & \bmzero & \bmzero & \bmzero & \bmoner \\
\bmzero & \bmzero & \bmzero & \bmzero & \bmoner
\end{array}$}\to\\
\scalebox{0.6}{$\begin{array}{c>{\columncolor{red!20}}cccc}
\bmzero & \bmoner & \bmzero & \bmzero & \bmzero \\
\bmzero & \bmzero & \bmoner & \bmone & \bmone \\
\bmzero & \bmzero & \bmoner & \bmone & \bmone\\
\bmzero & \bmzero & \bmzero & \bmoner & \bmzero \\
\bmone & \bmone & \bmone & \bmzero & \bmoner \\
\bmone & \bmzero & \bmzero & \bmzero & \bmoner \\
\bmzero & \bmzero & \bmzero & \bmzero & \bmoner
\end{array}$}&&\to\scalebox{0.6}{$\begin{array}{>{\columncolor{red!20}}ccccc}
\bmzero & \bmoner & \bmzero & \bmzero & \bmzero \\
\bmzero & \bmoner & \bmone & \bmone & \bmzero \\
\bmzero & \bmzero & \bmoner & \bmone & \bmone\\
\bmzero & \bmzero & \bmzero & \bmoner & \bmzero \\
\bmone & \bmone & \bmone & \bmzero & \bmoner \\
\bmone & \bmzero & \bmzero & \bmzero & \bmoner \\
\bmzero & \bmzero & \bmzero & \bmzero & \bmoner
\end{array}$}&&\to \scalebox{0.6}{$\begin{array}{c>{\columncolor{red!20}}cccc}
\bmoner & \bmzero & \bmzero & \bmzero & \bmzero \\
\bmzero & \bmoner & \bmone & \bmone & \bmzero \\
\bmzero & \bmzero & \bmoner & \bmone & \bmone\\
\bmzero & \bmzero & \bmzero & \bmoner & \bmzero \\
\bmone & \bmone & \bmone & \bmzero & \bmoner \\
\bmone & \bmzero & \bmzero & \bmzero & \bmoner \\
\bmzero & \bmzero & \bmzero & \bmzero & \bmoner
\end{array}$}&&\to\scalebox{0.6}{$\begin{array}{ccccc}
\bmoner & \bmzero & \bmzero & \bmzero & \bmzero \\
\bmzero & \bmoner & \bmone & \bmone & \bmzero \\
\bmzero & \bmoner & \bmone & \bmone & \bmzero\\
\bmzero & \bmzero & \bmzero & \bmoner & \bmzero \\
\bmone & \bmone & \bmone & \bmzero & \bmoner \\
\bmone & \bmzero & \bmzero & \bmzero & \bmoner \\
\bmzero & \bmzero & \bmzero & \bmzero & \bmoner
\end{array}$}\makebox[\widthof{$\to$}][c]{}
\end{alignat*}

Thus, the rightmost equitable partition of $\uu=1331421$ is $\uu^*=1|33|\cdot|1|421$.
\label{ex:rightmost_eq}
\end{example}

\begin{example}
We illustrate~\Cref{map:leftmost} applied to the word $\uu=1342143$.  At each step, the rightmost column with less than its equitable filling is highlighted.

\begin{alignat*}{4}\scalebox{0.6}{$\begin{array}{ccc>{\columncolor{red!20}}cc}
\bmzero & \bmzero & \bmzero & \makebox[\widthof{$\bmone$}][c]{$\bmzero$} & \bmoner \\
\bmone & \bmone & \bmzero & \bmzero & \bmoner \\
\bmone & \bmone & \bmone & \bmzero & \bmoner\\
\bmone & \bmzero & \bmzero & \bmzero & \bmoner \\
\bmzero & \bmzero & \bmzero & \bmzero & \bmoner \\
\bmone & \bmone & \bmone & \bmzero & \bmoner \\
\bmone & \bmone & \bmzero & \bmzero & \bmoner
\end{array}$} &&\to \scalebox{0.6}{$\begin{array}{ccc>{\columncolor{red!20}}cc}
\bmzero & \bmzero & \bmzero & \bmoner & \bmzero \\
\bmone & \bmone & \bmzero & \bmzero & \bmoner \\
\bmone & \bmone & \bmone & \bmzero & \bmoner\\
\bmone & \bmzero & \bmzero & \bmzero & \bmoner \\
\bmzero & \bmzero & \bmzero & \bmzero & \bmoner \\
\bmone & \bmone & \bmone & \bmzero & \bmoner \\
\bmone & \bmone & \bmzero & \bmzero & \bmoner
\end{array}$} &&\to \scalebox{0.6}{$\begin{array}{ccc>{\columncolor{red!20}}cc}
\bmzero & \bmzero & \bmzero & \bmoner & \bmzero \\
\bmone & \bmzero & \bmzero & \bmoner & \bmone \\
\bmone & \bmone & \bmone & \bmzero & \bmoner\\
\bmone & \bmzero & \bmzero & \bmzero & \bmoner \\
\bmzero & \bmzero & \bmzero & \bmzero & \bmoner \\
\bmone & \bmone & \bmone & \bmzero & \bmoner \\
\bmone & \bmone & \bmzero & \bmzero & \bmoner
\end{array}$} &&\to \scalebox{0.6}{$\begin{array}{ccc>{\columncolor{red!20}}cc}
\bmzero & \bmzero & \bmzero & \bmoner & \bmzero \\
\bmone & \bmzero & \bmzero & \bmoner & \bmone \\
\bmone & \bmone & \bmzero & \bmoner & \bmone\\
\bmone & \bmzero & \bmzero & \bmzero & \bmoner \\
\bmzero & \bmzero & \bmzero & \bmzero & \bmoner \\
\bmone & \bmone & \bmone & \bmzero & \bmoner \\
\bmone & \bmone & \bmzero & \bmzero & \bmoner
\end{array}$} \to \\
 \scalebox{0.6}{$\begin{array}{cc>{\columncolor{red!20}}ccc}
\bmzero & \bmzero & \bmzero & \bmoner & \bmzero \\
\bmone & \bmzero & \bmzero & \bmoner & \bmone \\
\bmone & \bmone & \bmzero & \bmoner & \bmone\\
\bmzero & \bmzero & \bmzero & \bmoner & \bmone \\
\bmzero & \bmzero & \bmzero & \bmzero & \bmoner \\
\bmone & \bmone & \bmone & \bmzero & \bmoner \\
\bmone & \bmone & \bmzero & \bmzero & \bmoner
\end{array}$} &&\to \scalebox{0.6}{$\begin{array}{ccc>{\columncolor{red!20}}cc}
\bmzero & \bmzero & \bmoner & \bmzero & \bmzero \\
\bmone & \bmzero & \bmzero & \bmoner & \bmone \\
\bmone & \bmone & \bmzero & \bmoner & \bmone\\
\bmzero & \bmzero & \bmzero & \bmoner & \bmone \\
\bmzero & \bmzero & \bmzero & \bmzero & \bmoner \\
\bmone & \bmone & \bmone & \bmzero & \bmoner \\
\bmone & \bmone & \bmzero & \bmzero & \bmoner
\end{array}$} &&\to \scalebox{0.6}{$\begin{array}{cc>{\columncolor{red!20}}ccc}
\bmzero & \bmzero & \bmoner & \bmzero & \bmzero \\
\bmone & \bmzero & \bmzero & \bmoner & \bmone \\
\bmone & \bmone & \bmzero & \bmoner & \bmone\\
\bmzero & \bmzero & \bmzero & \bmoner & \bmone \\
\bmzero & \bmzero & \bmzero & \bmoner & \bmzero \\
\bmone & \bmone & \bmone & \bmzero & \bmoner \\
\bmone & \bmone & \bmzero & \bmzero & \bmoner
\end{array}$} &&\to \scalebox{0.6}{$\begin{array}{cc>{\columncolor{red!20}}ccc}
\bmzero & \bmzero & \bmoner & \bmzero & \bmzero \\
\bmzero & \bmzero & \bmoner & \bmone & \bmone \\
\bmone & \bmone & \bmzero & \bmoner & \bmone\\
\bmzero & \bmzero & \bmzero & \bmoner & \bmone \\
\bmzero & \bmzero & \bmzero & \bmoner & \bmzero \\
\bmone & \bmone & \bmone & \bmzero & \bmoner \\
\bmone & \bmone & \bmzero & \bmzero & \bmoner
\end{array}$}\to\\
\scalebox{0.6}{$\begin{array}{c>{\columncolor{red!20}}cccc}
\bmzero & \bmzero & \bmoner & \bmzero & \bmzero \\
\bmzero & \bmzero & \bmoner & \bmone & \bmone \\
\bmone & \bmzero & \bmoner & \bmone & \bmone\\
\bmzero & \bmzero & \bmzero & \bmoner & \bmone \\
\bmzero & \bmzero & \bmzero & \bmoner & \bmzero \\
\bmone & \bmone & \bmone & \bmzero & \bmoner \\
\bmone & \bmone & \bmzero & \bmzero & \bmoner
\end{array}$}&&\to\scalebox{0.6}{$\begin{array}{cc>{\columncolor{red!20}}ccc}
\bmzero & \bmoner & \bmzero & \bmzero & \bmzero \\
\bmzero & \bmzero & \bmoner & \bmone & \bmone \\
\bmone & \bmzero & \bmoner & \bmone & \bmone\\
\bmzero & \bmzero & \bmzero & \bmoner & \bmone \\
\bmzero & \bmzero & \bmzero & \bmoner & \bmzero \\
\bmone & \bmone & \bmone & \bmzero & \bmoner \\
\bmone & \bmone & \bmzero & \bmzero & \bmoner
\end{array}$}&&\to \scalebox{0.6}{$\begin{array}{c>{\columncolor{red!20}}cccc}
\bmzero & \bmoner & \bmzero & \bmzero & \bmzero \\
\bmzero & \bmzero & \bmoner & \bmone & \bmone \\
\bmone & \bmzero & \bmoner & \bmone & \bmone\\
\bmzero & \bmzero & \bmoner & \bmone & \bmzero \\
\bmzero & \bmzero & \bmzero & \bmoner & \bmzero \\
\bmone & \bmone & \bmone & \bmzero & \bmoner \\
\bmone & \bmone & \bmzero & \bmzero & \bmoner
\end{array}$}&&\to \scalebox{0.6}{$\begin{array}{ccccc}
\bmzero & \bmoner & \bmzero & \bmzero & \bmzero \\
\bmzero & \bmoner & \bmone & \bmone & \bmzero \\
\bmone & \bmzero & \bmoner & \bmone & \bmone\\
\bmzero & \bmzero & \bmoner & \bmone & \bmzero \\
\bmzero & \bmzero & \bmzero & \bmoner & \bmzero \\
\bmone & \bmone & \bmone & \bmzero & \bmoner \\
\bmone & \bmone & \bmzero & \bmzero & \bmoner
\end{array}$}\makebox[\widthof{$\to$}][c]{}
\end{alignat*}

Thus, the rightmost equitable partition of $\uu$ is $\uu^*=\cdot|13|42|1|43$.
\label{ex:rightmost_eq2}
\end{example}

We begin with a lemma about the filling of column $0$ during the execution of~\Cref{map:leftmost}.

\begin{lemma} \label{lem:more-than-eq}
While running~\Cref{map:leftmost}, before we have reached an equitable
filling, column $0$ will always be more than equitably filled.
\end{lemma}

\begin{proof}
The set of columns which are more than equitably filled can only decrease over time, since we never make a move which increases the number of copies of $\bmone$ in a column above the equitable amount.  Thus, if a column is more than equitably filled at a certain point, it has been more than equitably filled since the beginning.

Consider some $j>0$, and suppose that at some point in the execution of~\Cref{map:leftmost}, column $j$ is more than equitably filled.  By what we have just argued, column $j$ has been more than equitably filled from the beginning. Thus, we never pushed any letters from the $(j-1)$st block into the $j$th block, and we never moved any letters into any block to the left of the $j$th block (since any such letter starts in block $0$ and so would have had to pass through the $j$th block).  So the reason that the $j$th column is more than equitably filled is accounted for entirely by wrap-around from letters which are in blocks to its right.  Every column to the left of $j$, and column 0, will also receive copies of $\bmone$ from each of the letters which contribute to the $j$th column.  It follows that they are all more than equitably filled.

Thus, at each step of the algorithm, if the filling is not yet equitable, the columns which are more than equitably filled consist of an initial (left-aligned) subsequence of the columns, possibly empty, together with column 0. 
\end{proof}

\begin{theorem}
\label{thm:unique_rightmost}
Any $\uu \in \Am$ admits a unique rightmost equitable partition $\invf(\uu)$.
\end{theorem}


\begin{proof}
We claim that~\Cref{map:leftmost} constructs the unique rightmost equitable partition.  We first show that~\Cref{map:leftmost} does not attempt any illegal moves, and so returns an equitable partition.  Let $j$ be the rightmost column which is less than equitably filled.
  \Cref{map:leftmost} might fail in two ways:
\begin{enumerate}
\item Suppose $j=0$.  If column $0$ is less than equitably filled, then \Cref{map:leftmost} would attempt to delete the leftmost letter of the nonexistent block $\bb_{-1}$.  But this case cannot arise by~\Cref{lem:more-than-eq}.
\item Now suppose $j>0$.  \Cref{map:leftmost} might try to delete the leftmost entry of an \emph{empty} block $\bb_{j-1}$ when column $j>0$ is less than equitably filled.

If $j>1$, since column $j-1$ is to the right of $j$, it must be at least equitably filled.  Since the number of copies of $\bmone$ in an equitable filling of column $j$ is at most the number in an equitable filling of column $j-1$, and column $j-1$ is at least equitably filled while $j$ is not, there must be a letter in block $j-1$.

If $j=1$, the above argument fails because the number of copies of $\bmone$ in an equitable filling of column 1 may be greater (by one)
than the number in an equitable filling of column 0.  But by \Cref{lem:more-than-eq}, we know that column 0 is strictly more than equitably filled, and so if column 1 is less than equitably filled, there must be a letter in block $0$.
\end{enumerate}

At each step letters are moved further to the left, so we never obtain the same partition twice.    Since there are only a finite number of partitioned words,~\Cref{map:leftmost} must eventually terminate.

We now show that \Cref{map:leftmost} outputs the unique rightmost equitable partition $\uu^*$.  For each step of \Cref{map:leftmost}, record the pair $(j,i)$, where $j$ is the rightmost column that is less than equitably filled, and $i$ is the index of the letter $\uu_i$ that we move from $\uu_{j-1}^*$
to $\uu_{j}^*$.  Consider another equitable partition $\vv^*$ of $\uu$ and find the first recorded pair $(j,i)$ such that $\uu_i$ is in $\vv_{j-1}^*$.  If there is no such pair, then each letter of $\uu$ is in a block at least as far to the left in $\vv^*$
as in $\uu^*$ and we are done.

Otherwise, write $\uu^\partial$ for
the partition produced by running~\Cref{map:leftmost} up to (but not including) the step that produced the pair $(j,i)$.  By the choice of the pair $(j,i)$, every letter of $\uu$ is at least as far to the left in $\vv^*$ as in $\uu^\partial$, since the step at which we recorded $(j,i)$ is precisely the first step in \Cref{map:leftmost} in which we moved a letter to the left of its position in $\vv^*$.  The partition $\uu^\partial$ is non-equitable,
and $j$ is its rightmost column that is less than equitably filled.  Since every letter in a block to the left of $j$ that does not contribute a $\bmone$ to column $j$ for $\uu^\partial$ is at least as far to the left in $\vv^*$ as in $\uu^\partial$, it does not contribute
a $\bmone$ to column $j$ in $\vv^*$ either.  Every letter in a block to the right of $j$ is also at least as far to the left in $\vv^*$ as in $\uu^\partial$, and at the same time, since
the
initial letter in $\uu_{j-1}^\partial$ is by assumption in $\vv^*_{j-1}$, no letter
that does not contribute a $\bmone$ to column $j$ for $\uu^\partial$ can have moved to the left of
$\vv^*_{j-1}$ to contribute a $\bmone$ to column $\vv^*_j$.  Thus, $\vv^*$ has at most as many copies of $\bmone$ in
column $j$ as $\uu^\partial$ does, which is too few to be equitable, contradicting our assumption
that $\vv^*$ was equitable.
\end{proof}

\subsection{Balanced Block Suffixes and the Successful Partition}  In this section, we give some structural definitions and results on the poset of equitable partitions $\equ(\uu)$, and give alternative characterizations of the rightmost and successful partitions.  Using these characterizations, we prove that the unique rightmost equitable partition is the unique successful partition.

\begin{definition}
Fix $\uu \in \Am$ and $\uu^* \in \equ(\uu)$.  A \defn{balanced block suffix} is a partitioned subword $\us^*$ of $\uu^*$ whose intersection with each block of $\uu^*$ is a suffix of that block, and that contributes the same number of copies of $\bmone$ to each column of $M_{\uu^*}$.  In particular, $\us^*$ is an equitable partition of $\us$ and $|\us|_m=0$.

A \defn{left balanced block suffix} has the additional constraint that it has empty rightmost block: $\us^*_0=\emptyset$. A \defn{minimal balanced block suffix} is a balanced block suffix whose intersection with all other balanced block suffixes is itself or empty.
\label{def:block_suffix}
\end{definition}

All minimal balanced block suffixes for the equitable partitions of $1331421$ and $1342143$ are shown in~\Cref{ex:componentwise}.  In~\Cref{ex:componentwise}, we differentiate different minimal balanced block suffixes by underlining one and wavy underlining the other.  Two particular equitable partitions are examined in more detail below in~\Cref{ex:min_block_suffix}.

\begin{example}
\label{ex:min_block_suffix}
The equitable partitions $13|31|4|2|1$ and $1|34|21|4|3$ each have three balanced block suffixes:
\setlength{\arraycolsep}{3pt}
\[\begin{array}{cccc|cc|c|c|c}
\us^*&=&\textcolor{lightgray}{1}&3&\textcolor{lightgray}{3}&\textcolor{lightgray}{1}&\textcolor{lightgray}{4}&2&\textcolor{lightgray}{1}\\
\ut^*&=&\textcolor{lightgray}{1}&\textcolor{lightgray}{3}&\textcolor{lightgray}{3}&1&4&\textcolor{lightgray}{2}&\textcolor{lightgray}{1}\\
\us^* \cup \ut^*&=&\textcolor{lightgray}{1}&3&\textcolor{lightgray}{3}&1&4&2&\textcolor{lightgray}{1}\\
\end{array} \text{ and } \begin{array}{ccc|cc|cc|c|c}
\us^*&=&\textcolor{lightgray}{1}&\textcolor{lightgray}{3}&\textcolor{lightgray}{4}&\textcolor{lightgray}{2}&1&4&\textcolor{lightgray}{3}\\
\ut^*&=&1&\textcolor{lightgray}{3}&4&\textcolor{lightgray}{2}&\textcolor{lightgray}{1}&\textcolor{lightgray}{4}&\textcolor{lightgray}{3}\\
\us^* \cup \ut^*&=&1&\textcolor{lightgray}{3}&4&\textcolor{lightgray}{2}&1&4&\textcolor{lightgray}{3}\\
\end{array}.\]


For each equitable partition, all three are left balanced block suffixes, but only the first two are minimal. 
\end{example}

The following lemma establishes the existence of a \defn{maximum left balanced block suffix}---that is, a left balanced block suffix containing all left balanced block suffixes.

\begin{lemma}
\label{lem:notvisited}
For any $\uu^* \in \equ(\uu)$, $\uu^*$ is not successful if and only if there exists a nonempty left balanced block suffix.  Moreover, in all cases (successful or not), the letters of $\uu^*$ not removed by~\Cref{map:backwards} form the maximum left balanced block suffix $\us^*$ of $\uu^*$ (possibly empty).
\end{lemma}

\begin{proof}
\Cref{map:backwards} terminates unsuccessfully when it tries to remove a letter from an empty block of $\uu^*$.  This block corresponds to a column in $M_{\uu^*}$ whose remaining copies of $\bmone$ all came from other blocks.  Since every time~\Cref{map:backwards} removes a letter from $\uu^*$ the new partitioned word is still equitable, it must remove a letter from a block corresponding to the leftmost column with the most copies of $\bmone$, or a letter from a block corresponding to the rightmost column (if all columns have the same number of copies of $\bmone$).  Then \Cref{map:backwards} does not succeed only if it was trying to remove a letter from the empty rightmost block, so that the copies of $\bmone$ corresponding to the remaining letters in $\uu^*$ are equally distributed among the columns, and there are no remaining letters in the rightmost block.  This is the condition to be a left balanced block suffix.


Now let $\us^*$ be any left balanced block suffix. Since $\us^*$ is equitable with $|\us|_m = 0$, removing $\us^*$ from $\uu^*$ leaves an equitable partition $\uu^*{-}\us^*$. Since each block of $\us^*$ is a suffix of the corresponding block of $\uu^*$, \Cref{map:backwards} will remove the same letters from $\uu^*{-}\us^*$ that it removed from $\uu^*$. In particular, \Cref{map:backwards} with input $\uu^*{-}\us^*$ still fails when it reaches an empty rightmost block. Since $\us^*_0$ is empty by assumption, we conclude that no letters in $\us^*$ were removed when \Cref{map:backwards} was run on $\uu^*$. Thus $\us^*$ is contained in the left balanced block suffix of $\uu^*$ consisting of all letters not removed by \Cref{map:backwards}.
\end{proof}

\begin{lemma}
\label{lem:cascade}
Let $\uu_i$ be a letter of $\uu^*$ that is rightmost in its block, but not as far to the right as it is in $\invf(\uu)$.  Then $\uu_i$ is part of a minimal left balanced block suffix $\us^*$.
\end{lemma}
\begin{proof}
Consider the sequence of forced moves
that follow once we move $\uu_i$ one block to the right: specifically, moving it to the right means that some
column now has too many copies of $\bmone$, which forces a letter to move out of that block, which means that another column now has too many copies of $\bmone$, and so on.  We refer to this sequence as the
\defn{cascade} of forced moves.  The cascade never tries to move a letter out of an empty block because
there is necessarily an element available whenever needed.  It also never tries to move a letter out of the rightmost block because if $\uu_i$ can be further to the right, then so can any other element in the cascade.
The cascade terminates since there are only a finite number of moves that can be applied before a box will be moved back into the column from which $\uu_i$ was initially removed, which will terminate the cascade; the resulting partition is necessarily again equitable.  This is illustrated in~\Cref{ex:cascade}.

We now show that the cascade never moves any letter more than once.  Suppose that up to a certain step in the cascade, we have not shifted any letter more than once, but we now move a letter $\uu_j$ for the second time.
At this point, let $\uu^{\partial}$ be the partition and let $\block(\uu^{\partial},j)=k$. Suppose that $\uu^*_k$ has $\ell$ letters, so that all $\ell$ letters originally in block $\uu^*_k$ have already been shifted.  Then the block to which our first letter $\uu_i$ belonged---$\block(\uu^*,i)$---is not equal to $k$, because the cascade would have concluded upon arriving back in block $\block(\uu^*,i)$.  Also, since it is impossible to shift letters from the rightmost block, we must have $k\neq 0$.  In order to shift a letter from block $k$, we must have just finished shifting a letter whose corresponding row in $M^{\uu^{\partial}}$ has rightmost $\bmone$ in column $k + 1$. Since $\uu^{*}$ is equitable, we know that in $\uu^*$, the number of letters whose rightmost $\bmone$ is in column
$k + 1$ is equal to or one less than the number of letters in $\uu^*_k$.  By our assumption that we have moved letters at most once in our cascade from $\uu^{*}$ to $\uu^{\partial}$, it follows that we have
previously moved at most $\ell-1$ letters out of block $\uu^*_k$---one fewer than the number which
it started with, contrary to our assumption that at this time, we had already moved all of them.

Let $\us^*$ be the partitioned word consisting of the letters moved in the cascade.  Since each of the letters in $\us^*$ is moved exactly once and $\uu_i$ is among them, $\us^*$ is a nonempty left balanced block suffix.  It is also clear that $\us^*$ is minimal.
\end{proof}

\begin{example}
We illustrate the cascade of forced moves in the proof of \Cref{lem:cascade}.  Consider the equitable partition $\uu^* = 1\underline{1}| 3\underline{2}|\underline{3}| 2\underline{4}|\cdot$ with $m=5$, where a minimal left balanced block suffix has been underlined.  This balanced block suffix contributes two copies of $\bmone$ to each column of $M_{\uu^*}$.  Suppose we move the underlined $\underline{2}$ from block 3 to block 2 to form
$1\underline{1}| 3|\underline{2}\underline{3}| 2\underline{4}|\cdot$.  Then column 1 has too many copies of $\bmone$, and so we adjust to $1\underline{1}| 3|\underline{2}\underline{3}| 2|\underline{4}$, then to $1\underline{1}| 3|\underline{2}| \underline{3}2|\underline{4}$ and finally arrive at $1| \underline{1}3|\underline{2}| \underline{3}2|\underline{4}$, which is once again an equitable partition.  This process is illustrated below, where the column with too many copies of $\bmone$ is highlighted, and the copies of $\bmone$ in the row corresponding to the letter that was forced to move are colored.

\begin{alignat*}{5}\scalebox{0.6}{$\begin{array}{ccccc}
\bmone & \bmzero & \bmzero & \bmzero & \bmzero \\
\bmoner & \bmzero & \bmzero & \bmzero & \bmzero \\
\bmzero & \bmone & \bmone & \bmone & \bmzero\\
\bmzero & \bmoner & \bmoner & \bmzero & \bmzero \\
\bmzero & \bmzero & \bmoner & \bmoner & \bmoner \\
\bmzero & \bmzero & \bmzero & \bmone & \bmone \\
\bmoner & \bmoner & \bmzero & \bmoner & \bmoner
\end{array}$} &&\to \scalebox{0.6}{$\begin{array}{ccc>{\columncolor{red!20}}cc}
\bmone & \bmzero & \bmzero & \bmzero & \bmzero \\
\bmone & \bmzero & \bmzero & \bmzero & \bmzero \\
\bmzero & \bmone & \bmone & \bmone & \bmzero\\
\bmzero & \bmzero & \bmoner & \bmoner & \bmzero \\
\bmzero & \bmzero & \bmone & \bmone & \bmone \\
\bmzero & \bmzero & \bmzero & \bmone & \bmone \\
\bmone & \bmone & \bmzero & \bmone & \bmone
\end{array}$} &&\to \scalebox{0.6}{$\begin{array}{cc>{\columncolor{red!20}}ccc}
\bmone & \bmzero & \bmzero & \bmzero & \bmzero \\
\bmone & \bmzero & \bmzero & \bmzero & \bmzero \\
\bmzero & \bmone & \bmone & \bmone & \bmzero\\
\bmzero & \bmzero & \bmone & \bmone & \bmzero \\
\bmzero & \bmzero & \bmone & \bmone & \bmone \\
\bmzero & \bmzero & \bmzero & \bmone & \bmone \\
\bmoner & \bmoner & \bmoner & \bmzero & \bmoner
\end{array}$} &&\to \scalebox{0.6}{$\begin{array}{>{\columncolor{red!20}}ccccc}
\bmone & \bmzero & \bmzero & \bmzero & \bmzero \\
\bmone & \bmzero & \bmzero & \bmzero & \bmzero \\
\bmzero & \bmone & \bmone & \bmone & \bmzero\\
\bmzero & \bmzero & \bmone & \bmone & \bmzero \\
\bmoner & \bmzero & \bmzero & \bmoner & \bmoner \\
\bmzero & \bmzero & \bmzero & \bmone & \bmone \\
\bmone & \bmone & \bmone & \bmzero & \bmone
\end{array}$} &&\to \scalebox{0.6}{$\begin{array}{ccccc}
\bmone & \bmzero & \bmzero & \bmzero & \bmzero \\
\bmzero & \bmoner & \bmzero & \bmzero & \bmzero \\
\bmzero & \bmone & \bmone & \bmone & \bmzero\\
\bmzero & \bmzero & \bmone & \bmone & \bmzero \\
\bmone & \bmzero & \bmzero & \bmone & \bmone \\
\bmzero & \bmzero & \bmzero & \bmone & \bmone \\
\bmone & \bmone & \bmone & \bmzero & \bmone
\end{array}$}
\end{alignat*}

\label{ex:cascade}
\end{example}

The following lemma is the counterpart of the characterization in~\Cref{lem:notvisited} for the rightmost equitable partition.

\begin{lemma}
For any $\uu^* \in \equ(\uu)$, $\uu^*$ has a nonempty left balanced block suffix if and only if $\uu^* \neq \invf(\uu)$.
\label{thm:unique_succ}
\end{lemma}

\begin{proof}
If $\uu^*$ has a nonempty left balanced block suffix, let $\us^*$ be the partitioned word containing those letters that remain in $\uu^*$ after applying \Cref{map:backwards}.  By \Cref{lem:notvisited}, these form a left balanced block suffix.  Shifting the letters in $\us^*$ one block to the right in $\uu^*$---while preserving those letters in $\uu^*$ that do not appear in $\us^*$---we obtain a new equitable partition for $\uu$ that is greater than $\uu^*$ in the poset $\equ(\uu)$.

If $\uu^* \neq \invf(\uu)$, then there is some letter $\uu_i$ of $\uu^*$ that is not as far right as it is in $\invf(\uu)$; we can take $\uu_i$ to be rightmost in its block.   Then $\uu^*$ has a nonempty left balanced block suffix by \Cref{lem:cascade}.
\end{proof}


\begin{theorem} 
For any partition $\uu^*$ of $\uu$, $\uu^*$ is successful if and only if $\uu^*$ is the rightmost equitable partition.
\label{thm:right_eq_succ}
\end{theorem}
\begin{proof}
By~\Cref{lem:succ_equ}, any successful partition is necessarily equitable.  We conclude using~\Cref{lem:notvisited} and~\Cref{thm:unique_succ} that $\uu^*$ is successful if and only if it has no nonempty left balanced block suffix if and only if it is \emph{the} rightmost equitable partition (\Cref{thm:unique_rightmost}).
\end{proof}

\subsection{The Modular Sweep Map}
\label{sec:sweep}

We can now prove our main theorem.

{
\renewcommand{\thetheorem}{\ref{thm:main_thm}}
\begin{theorem}
The modular sweep map is a bijection $\Am \to \Am$.
\end{theorem}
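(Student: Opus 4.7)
The plan is to exploit the factorization $\sw = \f \circ \swp$ together with the uniqueness of the successful partition established in Theorem \ref{thm:unique_succ}. Recall that $\swp$ is injective with left inverse $\iswp$ (as established in Section \ref{sec:inv_modular_presweep}), and that by definition $\iswp$ succeeds on any partitioned word in the image of $\swp$. Consequently, for every $\w \in \Am$ the partitioned word $\swp(\w)$ is a \emph{successful} partition of $\f(\swp(\w)) = \sw(\w)$.

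For injectivity of $\sw$, I would argue as follows. Suppose $\sw(\w) = \sw(\w') = \uu$. Then $\swp(\w)$ and $\swp(\w')$ are both successful partitions of the same word $\uu$, so by the uniqueness statement in Theorem \ref{thm:unique_succ} they must coincide; applying $\iswp$ (which is a genuine left inverse of $\swp$) then yields $\w = \w'$. For surjectivity, pick an arbitrary $\uu \in \Am$ and let $\uu^* := \invfs(\uu)$ be its unique successful partition, whose existence is guaranteed by Theorem \ref{thm:unique_succ}. Set $\w := \iswp(\uu^*) \in \Am$. Since $\uu^*$ is successful, Algorithm \ref{map:backwards} reconstructs from $\uu^*$ a word $\w$ whose presweep is precisely $\uu^*$, i.e.\ $\swp(\w) = \uu^*$. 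Therefore $\sw(\w) = \f(\swp(\w)) = \f(\uu^*) = \uu$, since the forgetful map simply concatenates the blocks of a partition of $\uu$.

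All the serious work has already been done upstream: the main obstacle is not in this theorem itself but in Theorem \ref{thm:unique_succ}, which guarantees the existence and uniqueness of the successful partition. Once that theorem is in hand, the bijectivity of $\sw$ is a two-line consequence of the factorization $\sw = \f \circ \swp$ combined with the observation that $\swp$ identifies $\Am$ bijectively with the set of successful partitions inside $\Am^*$, and that $\f$ restricted to the set of successful partitions is inverse to $\invfs$.
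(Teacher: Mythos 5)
Your proposal is correct and takes essentially the same route as the paper: factor $\sw = \f \circ \swp$, observe that the image of $\swp$ is exactly the set of successful partitions, and invoke the existence and uniqueness of the successful partition (Theorem~\ref{thm:unique_succ}) to invert $\f$ on that set. You simply spell out the injectivity and surjectivity arguments that the paper leaves implicit.
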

\addtocounter{theorem}{-1}
}

\begin{proof}
Recall that the modular sweep map may be written as the composition \[\sw = \f \circ \swp: \Am \to \Am.\]  Since~\Cref{sec:inv_modular_presweep} inverts $\swp$ and~\Cref{map:leftmost} inverts $\f$ (using~\Cref{thm:right_eq_succ}), we conclude that the modular sweep map is invertible and is given by the composition of~\Cref{map:leftmost} with~\Cref{map:backwards}.
\end{proof}

\section{The Distributive Lattice of Equitable Partitions}\label{sec:eq_lattice}

In this section, motivated by an analogy to stable marriages, we show that the poset of equitable partitions is a distributive lattice.

\subsection{The Leftmost Equitable Partition}\label{sec:lefmost}  Having defined the \emph{rightmost} equitable partition, it is natural to ask for the \emph{leftmost} equitable partition.  As it turns out, there is exactly one leftmost equitable partition---it is the minimum element of $\equ(\uu)$.

\begin{definition}[{\cite[Definition 8.3]{thomas2014cyclic}}]\label{def:leftmost}
A \defn{leftmost equitable partition} is a minimal element of $\equ(\uu)$.
\end{definition}

That is, a leftmost equitable partition is an equitable partition $\uu^*$ of $\uu$ such that any other equitable partition $\vv^{*}$ of $\uu$ has $\block(\vv^{*},i)\leq \block(\uu^*,i)$ for all $i$.  \Cref{ex:componentwise} indicates the leftmost equitable partition in the poset of all equitable partitions of the words $\uu=1331421$ and $\uu=1342143$.  As we will show,~\Cref{map:rightmost} explicitly constructs the leftmost equitable partition.

\begin{algorithm}[ht]
\KwIn{A word $\uu \in \Am$.}
\KwOut{The leftmost equitable partition $\uu^* \in \Am^*$.}
Set $\uu^*=\uu|\cdot|\cdot|\cdots|\cdot$\;
\While{$\uu^*$ is not an equitable partition}{
		Let $j$ be the leftmost column of $M^{\uu^*}$ that is more than equitably filled\;
		Delete the rightmost letter of $\bb_{j}$ and prepend it to $\bb_{j-1}$\;
}
Return $\uu^*$\;
\caption{$\rightmost: \Am \to \Am^*$.}
\label{map:rightmost}
\end{algorithm}

\begin{example}
We illustrate~\Cref{map:rightmost} applied to the word $\uu=1331421$.  At each step, the leftmost column with more than its equitable filling is highlighted.

\begin{alignat*}{4}\scalebox{0.6}{$\begin{array}{>{\columncolor{red!20}}ccccc}
\bmoner & \bmzero & \bmzero & \bmzero & \makebox[\widthof{$\bmzero$}][c]{$\bmzero$} \\
\bmoner & \bmone & \bmone & \bmzero & \bmzero \\
\bmoner & \bmone & \bmone & \bmzero & \bmzero\\
\bmoner & \bmzero & \bmzero & \bmzero & \bmzero \\
\bmoner & \bmone & \bmone & \bmone & \bmzero \\
\bmoner & \bmone & \bmzero & \bmzero & \bmzero \\
\bmoner & \bmzero & \bmzero & \bmzero & \bmzero
\end{array}$} &&\to \scalebox{0.6}{$\begin{array}{>{\columncolor{red!20}}ccccc}
\bmoner & \bmzero & \bmzero & \bmzero & \makebox[\widthof{$\bmzero$}][c]{$\bmzero$} \\
\bmoner & \bmone & \bmone & \bmzero & \bmzero \\
\bmoner & \bmone & \bmone & \bmzero & \bmzero\\
\bmoner & \bmzero & \bmzero & \bmzero & \bmzero \\
\bmoner & \bmone & \bmone & \bmone & \bmzero \\
\bmoner & \bmone & \bmzero & \bmzero & \bmzero \\
\bmzero & \bmoner & \bmzero & \bmzero & \bmzero
\end{array}$} &&\to \scalebox{0.6}{$\begin{array}{>{\columncolor{red!20}}ccccc}
\bmoner & \bmzero & \bmzero & \bmzero & \makebox[\widthof{$\bmzero$}][c]{$\bmzero$} \\
\bmoner & \bmone & \bmone & \bmzero & \bmzero \\
\bmoner & \bmone & \bmone & \bmzero & \bmzero\\
\bmoner & \bmzero & \bmzero & \bmzero & \bmzero \\
\bmoner & \bmone & \bmone & \bmone & \bmzero \\
\bmzero & \bmoner & \bmone & \bmzero & \bmzero \\
\bmzero & \bmoner & \bmzero & \bmzero & \bmzero
\end{array}$} &&\to \scalebox{0.6}{$\begin{array}{>{\columncolor{red!20}}ccccc}
\bmoner & \bmzero & \bmzero & \bmzero & \bmzero \\
\bmoner & \bmone & \bmone & \bmzero & \bmzero \\
\bmoner & \bmone & \bmone & \bmzero & \bmzero\\
\bmoner & \bmzero & \bmzero & \bmzero & \bmzero \\
\bmzero & \bmoner & \bmone & \bmone & \bmone \\
\bmzero & \bmoner & \bmone & \bmzero & \bmzero \\
\bmzero & \bmoner & \bmzero & \bmzero & \bmzero
\end{array}$} \to \\
\scalebox{0.6}{$\begin{array}{c>{\columncolor{red!20}}cccc}
\bmoner & \bmzero & \bmzero & \bmzero & \bmzero \\
\bmoner & \bmone & \bmone & \bmzero & \bmzero \\
\bmoner & \bmone & \bmone & \bmzero & \bmzero\\
\bmzero & \bmoner & \bmzero & \bmzero & \bmzero \\
\bmzero & \bmoner & \bmone & \bmone & \bmone \\
\bmzero & \bmoner & \bmone & \bmzero & \bmzero \\
\bmzero & \bmoner & \bmzero & \bmzero & \bmzero
\end{array}$} &&\to \scalebox{0.6}{$\begin{array}{c>{\columncolor{red!20}}cccc}
\bmoner & \bmzero & \bmzero & \bmzero & \bmzero \\
\bmoner & \bmone & \bmone & \bmzero & \bmzero \\
\bmoner & \bmone & \bmone & \bmzero & \bmzero\\
\bmzero & \bmoner & \bmzero & \bmzero & \bmzero \\
\bmzero & \bmoner & \bmone & \bmone & \bmone \\
\bmzero & \bmoner & \bmone & \bmzero & \bmzero \\
\bmzero & \bmzero & \bmoner & \bmzero & \bmzero
\end{array}$} &&\to \scalebox{0.6}{$\begin{array}{c>{\columncolor{red!20}}cccc}
\bmoner & \bmzero & \bmzero & \bmzero & \bmzero \\
\bmoner & \bmone & \bmone & \bmzero & \bmzero \\
\bmoner & \bmone & \bmone & \bmzero & \bmzero\\
\bmzero & \bmoner & \bmzero & \bmzero & \bmzero \\
\bmzero & \bmoner & \bmone & \bmone & \bmone \\
\bmzero & \bmzero & \bmoner & \bmone & \bmzero \\
\bmzero & \bmzero & \bmoner & \bmzero & \bmzero
\end{array}$} &&\to \scalebox{0.6}{$\begin{array}{>{\columncolor{red!20}}ccccc}
\bmoner & \bmzero & \bmzero & \bmzero & \bmzero \\
\bmoner & \bmone & \bmone & \bmzero & \bmzero \\
\bmoner & \bmone & \bmone & \bmzero & \bmzero\\
\bmzero & \bmoner & \bmzero & \bmzero & \bmzero \\
\bmone & \bmzero & \bmoner & \bmone & \bmone \\
\bmzero & \bmzero & \bmoner & \bmone & \bmzero \\
\bmzero & \bmzero & \bmoner & \bmzero & \bmzero
\end{array}$}\to\\
\scalebox{0.6}{$\begin{array}{cc>{\columncolor{red!20}}ccc}
\bmoner & \bmzero & \bmzero & \bmzero & \bmzero \\
\bmoner & \bmone & \bmone & \bmzero & \bmzero \\
\bmzero & \bmoner & \bmone & \bmone & \bmzero\\
\bmzero & \bmoner & \bmzero & \bmzero & \bmzero \\
\bmone & \bmzero & \bmoner & \bmone & \bmone \\
\bmzero & \bmzero & \bmoner & \bmone & \bmzero \\
\bmzero & \bmzero & \bmoner & \bmzero & \bmzero
\end{array}$}&&\to\scalebox{0.6}{$\begin{array}{cc>{\columncolor{red!20}}ccc}
\bmoner & \bmzero & \bmzero & \bmzero & \bmzero \\
\bmoner & \bmone & \bmone & \bmzero & \bmzero \\
\bmzero & \bmoner & \bmone & \bmone & \bmzero\\
\bmzero & \bmoner & \bmzero & \bmzero & \bmzero \\
\bmone & \bmzero & \bmoner & \bmone & \bmone \\
\bmzero & \bmzero & \bmoner & \bmone & \bmzero \\
\bmzero & \bmzero & \bmzero & \bmoner & \bmzero
\end{array}$}&&\to \scalebox{0.6}{$\begin{array}{ccc>{\columncolor{red!20}}cc}
\bmoner & \bmzero & \bmzero & \bmzero & \bmzero \\
\bmoner & \bmone & \bmone & \bmzero & \bmzero \\
\bmzero & \bmoner & \bmone & \bmone & \bmzero\\
\bmzero & \bmoner & \bmzero & \bmzero & \bmzero \\
\bmone & \bmzero & \bmoner & \bmone & \bmone \\
\bmzero & \bmzero & \bmzero & \bmoner & \bmone \\
\bmzero & \bmzero & \bmzero & \bmoner & \bmzero
\end{array}$}&&\to\scalebox{0.6}{$\begin{array}{ccccc}
\bmoner & \bmzero & \bmzero & \bmzero & \bmzero \\
\bmoner & \bmone & \bmone & \bmzero & \bmzero \\
\bmzero & \bmoner & \bmone & \bmone & \bmzero\\
\bmzero & \bmoner & \bmzero & \bmzero & \bmzero \\
\bmone & \bmzero & \bmoner & \bmone & \bmone \\
\bmzero & \bmzero & \bmzero & \bmoner & \bmone \\
\bmzero & \bmzero & \bmzero & \bmzero & \bmoner
\end{array}$}\makebox[\widthof{$\to$}][c]{}
\end{alignat*}

Thus, the leftmost equitable partition of $\uu$ is $\uu^*=13|31|4|2|1$.
\label{ex:rightmost_example}
\end{example}

\begin{example}
We illustrate~\Cref{map:rightmost} applied to the word $\uu=1342143$.  At each step, the leftmost column with more than its equitable filling is highlighted.

\begin{alignat*}{5}\scalebox{0.6}{$\begin{array}{>{\columncolor{red!20}}ccccc}
\bmoner & \bmzero & \bmzero & \bmzero & \makebox[\widthof{$\bmzero$}][c]{$\bmzero$} \\
\bmoner & \bmone & \bmone & \bmzero & \bmzero \\
\bmoner & \bmone & \bmone & \bmone & \bmzero\\
\bmoner & \bmone & \bmzero & \bmzero & \bmzero \\
\bmoner & \bmzero & \bmzero & \bmzero & \bmzero \\
\bmoner & \bmone & \bmone & \bmone & \bmzero \\
\bmoner & \bmone & \bmone & \bmzero & \bmzero
\end{array}$} &&\to \scalebox{0.6}{$\begin{array}{>{\columncolor{red!20}}ccccc}
\bmoner & \bmzero & \bmzero & \bmzero & \makebox[\widthof{$\bmzero$}][c]{$\bmzero$} \\
\bmoner & \bmone & \bmone & \bmzero & \bmzero \\
\bmoner & \bmone & \bmone & \bmone & \bmzero\\
\bmoner & \bmone & \bmzero & \bmzero & \bmzero \\
\bmoner & \bmzero & \bmzero & \bmzero & \bmzero \\
\bmoner & \bmone & \bmone & \bmone & \bmzero \\
\bmzero & \bmoner & \bmone & \bmone & \bmzero
\end{array}$} &&\to \scalebox{0.6}{$\begin{array}{>{\columncolor{red!20}}ccccc}
\bmoner & \bmzero & \bmzero & \bmzero & \makebox[\widthof{$\bmzero$}][c]{$\bmzero$} \\
\bmoner & \bmone & \bmone & \bmzero & \bmzero \\
\bmoner & \bmone & \bmone & \bmone & \bmzero\\
\bmoner & \bmone & \bmzero & \bmzero & \bmzero \\
\bmoner & \bmzero & \bmzero & \bmzero & \bmzero \\
\bmzero & \bmoner & \bmone & \bmone & \bmone \\
\bmzero & \bmoner & \bmone & \bmone & \bmzero
\end{array}$} &&\to \scalebox{0.6}{$\begin{array}{>{\columncolor{red!20}}ccccc}
\bmoner & \bmzero & \bmzero & \bmzero & \makebox[\widthof{$\bmzero$}][c]{$\bmzero$} \\
\bmoner & \bmone & \bmone & \bmzero & \bmzero \\
\bmoner & \bmone & \bmone & \bmone & \bmzero\\
\bmoner & \bmone & \bmzero & \bmzero & \bmzero \\
\bmzero & \bmoner & \bmzero & \bmzero & \bmzero \\
\bmzero & \bmoner & \bmone & \bmone & \bmone \\
\bmzero & \bmoner & \bmone & \bmone & \bmzero
\end{array}$} &&\to \scalebox{0.6}{$\begin{array}{c>{\columncolor{red!20}}cccc}
\bmoner & \bmzero & \bmzero & \bmzero & \makebox[\widthof{$\bmzero$}][c]{$\bmzero$} \\
\bmoner & \bmone & \bmone & \bmzero & \bmzero \\
\bmoner & \bmone & \bmone & \bmone & \bmzero\\
\bmzero & \bmoner & \bmone & \bmzero & \bmzero \\
\bmzero & \bmoner & \bmzero & \bmzero & \bmzero \\
\bmzero & \bmoner & \bmone & \bmone & \bmone \\
\bmzero & \bmoner & \bmone & \bmone & \bmzero
\end{array}$} \to \\
 \scalebox{0.6}{$\begin{array}{c>{\columncolor{red!20}}cccc}
\bmoner & \bmzero & \bmzero & \bmzero & \makebox[\widthof{$\bmzero$}][c]{$\bmzero$} \\
\bmoner & \bmone & \bmone & \bmzero & \bmzero \\
\bmoner & \bmone & \bmone & \bmone & \bmzero\\
\bmzero & \bmoner & \bmone & \bmzero & \bmzero \\
\bmzero & \bmoner & \bmzero & \bmzero & \bmzero \\
\bmzero & \bmoner & \bmone & \bmone & \bmone \\
\bmzero & \bmzero & \bmoner & \bmone & \bmone
\end{array}$} &&\to \scalebox{0.6}{$\begin{array}{>{\columncolor{red!20}}ccccc}
\bmoner & \bmzero & \bmzero & \bmzero & \makebox[\widthof{$\bmzero$}][c]{$\bmzero$} \\
\bmoner & \bmone & \bmone & \bmzero & \bmzero \\
\bmoner & \bmone & \bmone & \bmone & \bmzero\\
\bmzero & \bmoner & \bmone & \bmzero & \bmzero \\
\bmzero & \bmoner & \bmzero & \bmzero & \bmzero \\
\bmone & \bmzero & \bmoner & \bmone & \bmone \\
\bmzero & \bmzero & \bmoner & \bmone & \bmone
\end{array}$} &&\to \scalebox{0.6}{$\begin{array}{cc>{\columncolor{red!20}}ccc}
\bmoner & \bmzero & \bmzero & \bmzero & \makebox[\widthof{$\bmzero$}][c]{$\bmzero$} \\
\bmoner & \bmone & \bmone & \bmzero & \bmzero \\
\bmzero & \bmoner & \bmone & \bmone & \bmone\\
\bmzero & \bmoner & \bmone & \bmzero & \bmzero \\
\bmzero & \bmoner & \bmzero & \bmzero & \bmzero \\
\bmone & \bmzero & \bmoner & \bmone & \bmone \\
\bmzero & \bmzero & \bmoner & \bmone & \bmone
\end{array}$} &&\to
 \scalebox{0.6}{$\begin{array}{>{\columncolor{red!20}}ccccc}
\bmoner & \bmzero & \bmzero & \bmzero & \makebox[\widthof{$\bmzero$}][c]{$\bmzero$} \\
\bmoner & \bmone & \bmone & \bmzero & \bmzero \\
\bmzero & \bmoner & \bmone & \bmone & \bmone\\
\bmzero & \bmoner & \bmone & \bmzero & \bmzero \\
\bmzero & \bmoner & \bmzero & \bmzero & \bmzero \\
\bmone & \bmzero & \bmoner & \bmone & \bmone \\
\bmone & \bmzero & \bmzero & \bmoner & \bmone
\end{array}$}&&\to\scalebox{0.6}{$\begin{array}{ccccc}
\bmoner & \bmzero & \bmzero & \bmzero & \makebox[\widthof{$\bmzero$}][c]{$\bmzero$} \\
\bmzero & \bmoner & \bmone & \bmone & \bmzero \\
\bmzero & \bmoner & \bmone & \bmone & \bmone\\
\bmzero & \bmoner & \bmone & \bmzero & \bmzero \\
\bmzero & \bmoner & \bmzero & \bmzero & \bmzero \\
\bmone & \bmzero & \bmoner & \bmone & \bmone \\
\bmone & \bmzero & \bmzero & \bmoner & \bmone
\end{array}$} \makebox[\widthof{$\to$}][c]{}
\end{alignat*}

Thus, the leftmost equitable partition of $\uu$ is $\uu^*=1|3421|4|3|\cdot$.
\label{ex:rightmost_example2}
\end{example}

As in~\Cref{sec:rightmost}, we begin with a lemma about column $0$.

\begin{lemma} \label{lem:less-than-eq}
While running~\Cref{map:rightmost}, column 0 will always be less than equitably filled or equitably filled.
\end{lemma}

\begin{proof}
The set of columns which are less than equitably filled or equitably filled can only decrease over time, since we never make a move which decreases the number of copies of $\bmone$ in a column below the equitable amount.  Thus, if a column is less than equitably filled at a certain point, it has been less than equitably filled since the beginning.

Consider some $j>0$, and suppose that at some point in the execution of~\Cref{map:rightmost}, column $j$ is less than equitably filled.  By what we have just argued, column $j$ has been less than equitably filled from the beginning.  In particular, we have never pushed any letters from the $j$th block to the $(j-1)$st block, and we have never moved any letters into any block to the right of block $j$ (since any such letter starts in block $m-1$ and so would have had to pass through the $j$th block).  So every column to the right of column $j$, including column 0, contains at most the same number of copies of $\bmone$ as column $j$.  It follows that all columns to the right of column $j$---except possibly column 0---are all also less than equitably filled.  Since an equitable filling of column $j$ has more than or equal to an equitable filling of column 0, column 0 is either less than equitably filled or equitably filled.

Thus, at each step of the algorithm, if the filling is not yet equitable, column 0 is either less than equitably filled or equitably filled.
\end{proof}

\begin{theorem}[{\cite[Lemma 8.1]{thomas2014cyclic}}]
For any word $\uu \in \Am$, there is a unique leftmost equitable partition $\rightmost(\uu)$.
\label{lem:rightmost}
\end{theorem}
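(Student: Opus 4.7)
Following the approach used for the rightmost equitable partition in Theorem\nobreakspace \ref {thm:unique_rightmost}, the plan is to construct the leftmost equitable partition directly by a dual of Algorithm\nobreakspace \ref {map:leftmost}, and to establish its maximality by a mirrored tracking argument. I define the algorithm $\rightmost\colon \Am \to \Am^*$ by: initialize $\uu^* := \uu|\cdot|\cdots|\cdot$, placing every letter in $\bb_{m-1}$; while $\uu^*$ is not equitable, identify the leftmost column $j$ of $M^{\uu^*}$ that is more than equitably filled, delete the rightmost letter of $\bb_j$, and prepend it to $\bb_{j-1}$; return $\uu^*$ when equitable. Block monotonicity is preserved, because the rightmost letter of $\bb_j$ is followed in $\uu$ only by letters already lying in blocks of index strictly less than $j$, and $\sum_i \block(\uu^*, i)$ strictly decreases at each step, forcing termination after at most $(m-1)N$ steps.

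Well-definedness demands the dual of Lemma\nobreakspace \ref {lem:more-than-eq}: as long as $\uu^*$ is not equitable, column $0$ is strictly less than equitably filled (so the leftmost over-filled column $j$ satisfies $j \geq 1$), and the chosen $\bb_j$ is non-empty. The key observation is that the count of column $0$ equals $|S|$, where $S := \{i : \block(\uu^*, i) < \uu_i\}$; a case analysis of the move $\bb_j \to \bb_{j-1}$ shows that $|S|$ is non-decreasing, and gains exactly one when the moved letter satisfies $\uu_i = j$. Starting from $|S| = 0$ and aiming for $|S| = \lfloor|\uu|/m\rfloor$ at termination, careful bookkeeping will force column $0$ to stay below its equitable quota until the final step; non-emptiness of $\bb_j$ at the chosen $j$ then follows from a wrap-around argument, since excess contributions to an over-filled column $j$ from an empty $\bb_j$ would have to come from letters in blocks to the right of $j$ whose spans wrap through column $0$, contradicting the previous bound.

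Maximality follows by the tracking argument of Theorem\nobreakspace \ref {thm:unique_rightmost}: record each step as a pair $(j, i)$ with $\uu_i$ the moved letter, and for any alternative equitable partition $\vv^*$ choose the first such pair with $\uu_i \in \vv^*_j$. Examining the intermediate state $\uu^\partial$ just before this step, letters that fail to contribute a $\bmone$ to column $j$ in $\uu^\partial$ also fail in $\vv^*$, while $\uu_i$ itself no longer does; counting then yields a strict excess of $\bmone$'s in column $j$ of $\vv^*$, contradicting equitability. Hence $\block(\vv^*, i) \leq \block(\uu^*, i)$ for every $i$, so $\uu^*$ is maximal, and uniqueness is immediate. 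The main obstacle I anticipate is the dual of Lemma\nobreakspace \ref {lem:more-than-eq}: unlike in the original, where moves only added $\bmone$'s to under-filled columns and so the over-filled set shrank monotonically, my dual algorithm may add $\bmone$'s to a column (at position $j - \uu_i \pmod m$) already at or above its equitable quota, so a finer invariant than monotonic shrinking is needed to rule out column $0$ ever exceeding $\lfloor|\uu|/m\rfloor$; the asymmetric equitable counts near the boundary $|\uu|_m$ may also require separate care.
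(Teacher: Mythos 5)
Your algorithm matches the paper's Algorithm~\ref{map:rightmost}, and the termination argument via $\sum_i\block(\uu^*,i)$ is correct, but the well-definedness step has a genuine gap that you yourself anticipate and do not close. The $|S|$-observation is a true fact---the count of column~$0$ equals $|S|=|\{i:\block(\uu^*,i)<\uu_i\}|$, it is non-decreasing, and it increases exactly on moves with $\uu_i=j$---but non-decreasing alone does not bound $|S|$ by $\lfloor|\uu|/m\rfloor$ before termination at an equitable partition is known (using the terminal value would be circular), and ``careful bookkeeping will force'' leaves the crucial step unsupplied. Moreover, your stated reason for doubting that Lemma~\ref{lem:more-than-eq} dualizes is mistaken. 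A move of Algorithm~\ref{map:rightmost} removes a $\bmone$ only from the chosen over-equitable column~$j$ (which therefore stays at or above its equitable amount) and adds a $\bmone$ to column $j-\uu_i\bmod m$; no column ever drops below its equitable amount as a result of a move, so the set of \emph{strictly under}-equitable columns never grows. That is precisely the paper's Lemma~\ref{lem:less-than-eq}, and it is what you need: a column $c>0$ under-equitable at some step was under-equitable from the start, so no letter has ever been pushed past block~$c$; with all letters in blocks $\geq c$ the column counts are non-decreasing from column~$0$ to column~$c$, and $\mathrm{count}(0)\leq\mathrm{count}(c)$ together with $\mathrm{count}(c)$ being strictly below column~$c$'s quota (which is at most $\lfloor|\uu|/m\rfloor+1$) gives $\mathrm{count}(0)\leq\lfloor|\uu|/m\rfloor$. (When no column $c>0$ is under-equitable, the same bound on column~$0$ follows by counting against $\sum_j(\text{quota of column }j)=|\uu|$.) The boundary asymmetry near $|\uu|_m$ you worry about is exactly why column~$0$ may hit its quota rather than stay strictly below it, as in Lemma~\ref{lem:more-than-eq}; it is absorbed by the inequality just stated.

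Two smaller slips. Your reason for $\bb_j\neq\emptyset$ is not sound: contributions to column~$j$ from blocks $>j$ need not wrap through column~$0$, so ``would have to come from wrap-around'' is false. The actual argument is that $\bb_j=\emptyset$ forces $\mathrm{count}(j)\leq\mathrm{count}(j+1)$, whence an over-equitable column~$j$ with a non-over-equitable column~$j+1$ would need column~$j$'s quota to be strictly smaller than column~$(j+1)$'s, and the only column with that property is column~$0$. And your maximality argument is stated backward: for Algorithm~\ref{map:rightmost}, column~$j$ of $\uu^\partial$ is \emph{over}-equitable, so you must show that contributors to column~$j$ in $\uu^\partial$ remain contributors in $\vv^*$ (block monotonicity plus the choice of the pair $(j,i)$ gives exactly this), yielding $\vv^*$ \emph{at least} as many $\bmone$'s in column~$j$ as $\uu^\partial$---not that non-contributors stay non-contributors, which would bound $\vv^*$'s count from above and give you nothing.
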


\begin{proof}
We claim that~\Cref{map:rightmost} constructs the unique leftmost equitable partition.  We first show that~\Cref{map:rightmost} does not attempt any illegal moves, and so returns an equitable partition.  Let $j$ be the leftmost column that is more than equitably filled at some point in the execution of~\Cref{map:rightmost}.  \Cref{map:rightmost} might fail in two ways:
\begin{enumerate}
\item Suppose $j=0$.  Then~\Cref{map:rightmost} would attempt to delete the rightmost letter of $\bb_0$ and append it to the nonexistent block $\bb_{-1}$.  But this case cannot arise by~\Cref{lem:less-than-eq}.
\item Suppose $j=m-1$.  \Cref{map:rightmost} might try to delete the rightmost entry of $\bb_{m-1}$ when $\bb_{m-1}$ is empty, but more than equitably filled.  Since $\bb_{m-1}$ is empty, the number of copies of $\bmone$ in column $m-1$ is at most the number in column $0$, which is less than equitably filled or equitably filled by~\Cref{lem:less-than-eq}.  So this case also cannot arise.
\item Otherwise, suppose $m-1>j>0$.  \Cref{map:rightmost} might try to delete the rightmost entry of an \emph{empty} block $\bb_{j}$ when column $j$ is more than equitably filled.  As in the previous case, since $\bb_{j}$ is empty, the number of copies of $\bmone$ in column $j$ is at most the number in column $j+1$.  Therefore, this case arises only when an equitable filling of column $j$ has strictly fewer copies of $\bmone$ than an equitable filling of column $j+1$.  But the only column with this property is column $0$.
\end{enumerate}%

At each step letters are moved further to the right, so we never obtain the same partition twice.    Since there are only a finite number of partitioned words,~\Cref{map:rightmost} must eventually terminate.

We now show that \Cref{map:rightmost} outputs the unique leftmost equitable partition $\uu^*$ of $\uu$.  For each step of \Cref{map:rightmost}, record the pair $(j,i)$, where $j$ is the leftmost column that is more than equitably filled, and $i$ is the index of the letter $\uu_i$ that we move from $\uu_j^*$
to $\uu_{j-1}^*$.  Consider another equitable partition $\vv^*$ of $\uu$ and find the first recorded pair $(j,i)$ such that $\uu_i$ is in $\vv_j^*$.  If there is no such pair, then each letter of $\uu$ is at least as far to the right in $\vv^*$ as it is in $\uu^*$ and we are done.

Otherwise, write $\uu^\partial$ for
the partition produced by running~\Cref{map:rightmost} up to (but not including) the step that produced the pair $(j,i)$. By the choice of $(j,i)$, it follows that every letter of $\uu$ must be at least as far to the right in $\vv^*$ as in $\uu^\partial$.  The partition $\uu^\partial$ is non-equitable,
and $j$ is its leftmost column that is more than equitably filled.  Every letter that contributes a $\bmone$ to column $j$ for $\uu^\partial$ is at least as far to the right in $\vv^*$ as in $\uu^\partial$ and since
the
final letter in $\uu_j^\partial$ is by assumption also in $\vv^*_j$, none of these  letters can have moved to the right of
$\vv^*_j$.  Thus, $\vv^*$ has at least as many copies of $\bmone$ in
column $j$ as $\uu^\partial$ does, which is too many to be equitable, contradicting our assumption
that $\vv^*$ was equitable.
\end{proof}

\subsection{The Distributive Lattice and Stable Marriages}

\begin{lemma}\label{lem:covers}
Let $\us^*$ be a minimal left balanced block suffix of $\uu^*$, and let $\vv^*$ be the result of shifting the letters in $\us^*$ one block to the right in $\uu^*$.  Then $\uu^* \lessdot \vv^*$ is a cover relation in the componentwise order on equitable partitions of $\uu$.  All cover relations are of this form.
\end{lemma}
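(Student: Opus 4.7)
The plan is to establish two claims: (I) $\vv^*$ is an equitable partition that covers $\uu^*$ in the componentwise order, and (II) every cover arises this way.

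For (I), I would first verify that $\vv^*$ is a valid partitioned word: because $\us^*$ meets each block of $\uu^*$ in a suffix, removing these suffixes and prepending them to the neighboring block preserves the consecutive-slicing structure, exactly as already computed in the setup of Algorithm\nobreakspace \ref{map:success}. Equitability of $\vv^*$ rests on the property that a balanced block-suffix contributes the same number of $\bmone$'s to each column of the balancing array; shifting $\us^*$ by one block then cyclically permutes these column contributions, so the column sums of $M^{\uu^*}$ are preserved and $\vv^*$ inherits equitability from $\uu^*$. Componentwise comparability is immediate because exactly the letters of $\us^*$ change block, each by one step. To see that this is a cover rather than part of a longer chain, suppose an equitable partition lay strictly between $\uu^*$ and $\vv^*$: for every letter, its block there must coincide with its block in either $\uu^*$ or $\vv^*$, so the letters that have moved form some $\ut^* \subseteq \us^*$. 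The consecutive-slicing requirement forces $\ut^*$ to be a block-suffix of $\uu^*$, and equitability of the intermediate forces $\ut^*$ to be balanced. Minimality of $\us^*$ then leaves $\ut^* \in \{\emptyset, \us^*\}$, contradicting strict betweenness.

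For (II), given a cover $\uu^* \lessdot \vv^*$ I would let $\us^*$ be the set of letters whose blocks in $\uu^*$ and $\vv^*$ disagree. The consecutive-slicing property forces $\us^*$ to be a block-suffix of $\uu^*$: were some $\uu_i \in \us^*$ followed by a letter $\uu_{i'}$ in the same block of $\uu^*$ but not in $\us^*$, the block of $\uu_i$ in $\vv^*$ would fall strictly below that of $\uu_{i'}$, violating the weakly-decreasing block assignment. If some letter of $\us^*$ shifted by more than one block, then shifting instead only the block-suffix of letters that have shifted by at least one further position would give an equitable partition strictly between $\uu^*$ and $\vv^*$ by (I), contradicting covering; hence every letter of $\us^*$ shifts by exactly one block. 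Equitability of $\vv^*$ then forces $\us^*$ to be balanced, and minimality follows because any proper balanced block-subsuffix of $\us^*$ would similarly produce an intermediate via a single-block shift of that subsuffix.

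The main obstacle is the bridge between the combinatorial definition of a balanced block-suffix and the claim that it contributes equally to every column of the balancing array---asserted in the note following the definition but deserving explicit justification. I would verify it via a partial-sum walk interpretation: reading the letters of $\us^*$ together with their starting block indices traces a cyclic walk of total displacement $|\us^*|_m = 0$, and the suffix-in-each-block structure inherited from the equitable $\uu^*$ forces this walk to visit each column the same number of times. Once that invariant is in hand, the equitability preservation in (I) and the necessity of balancedness in (II) both become routine bookkeeping.
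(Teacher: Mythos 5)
Your overall decomposition matches the paper's, but two load-bearing steps do not hold up as described. The first is the claim you single out as the ``main obstacle'': that a balanced block-suffix contributes the same number of copies of $\bmone$ to each column. This is false without minimality, and your proposed ``partial-sum walk'' justification uses only the suffix structure and $|\us^*|_m=0$, so it cannot succeed. Concretely, take $m=4$ and the equitable partition $\uu^*=31|1|1|11$ of $\uu=311111$ (every column of $M^{\uu^*}$ has two copies of $\bmone$, using the convention of Example~\ref{ex:successful_def} that a letter $\uu_i$ in block $k$ fills column $k$ and the $\uu_i-1$ columns cyclically to its right). The subword $\us^*=31|\cdot|\cdot|\cdot$ is a left balanced block-suffix ($|\us^*|_4=0$, a suffix of every block, empty in block $0$), yet it contributes $2,1,1,0$ copies of $\bmone$ to columns $3,2,1,0$, and shifting it one block to the right destroys equitability. (It is not minimal: it meets the balanced block-suffix $1|1|\cdot|11$ in a proper nonempty subset.) So equitability of $\vv^*$ in your part (I) genuinely requires minimality, and neither your walk argument nor the unqualified equal-contribution claim supplies that link; the paper's own one-line proof of the first statement instead leans on the forcing mechanism it develops for the converse.

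The second gap is in part (II), where the real work of the paper's proof lies. You dismiss the possibility that a letter moves more than one block by asserting that a partial shift ``would give an equitable partition strictly between $\uu^*$ and $\vv^*$ by (I)''---but (I) applies only to minimal left balanced block-suffixes, and you have not shown that the letters shifted at least twice (or the one-step shift of all moved letters) form such a thing, nor that the resulting intermediate partition is equitable. This is exactly the point on which the paper expends its effort: it moves the rightmost letter of $\us^*$ in its block, follows the resulting \emph{cascade of forced moves}, argues the cascade must terminate at $\vv^*$, and rules out any letter being moved twice by a counting argument comparing, for each block $k$, the number of letters whose rightmost $\bmone$ sits in column $k-1$ with $|\uu^*_k|$. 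Without that cascade-and-counting step or a substitute for it, the conclusion that every letter of $\us^*$ moves exactly once---hence that $\us^*$ is a left balanced block-suffix---is not established. Your closing minimality check has a smaller version of the same issue: the definition requires trivial intersection with \emph{all} balanced block-suffixes of $\uu^*$, not only those contained in $\us^*$, and (as the example above shows) the intersection of two balanced block-suffixes need not itself be balanced.
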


\begin{proof}
%
Suppose that we have a cover relation $\uu^* \lessdot \vv^*$.
Let
$\us^*$ be the letters of $\uu^*$ which have moved to the right in $\vv^*$.  We want to establish
that each of the letters of $\us^*$ has moved exactly one step to the right in $\vv^*$.  But this now follows from~\Cref{lem:cascade}, which establishes that every cover relation is obtained by moving each of the elements of some minimal left balanced block suffix one block to the right.

Conversely, it is clear that moving the elements of a minimal left balanced block suffix of $\uu^*$ one step to the right produces an equitable partition which is higher in the componentwise order than $\uu^*$.   By the previous analysis, this must in fact be a cover relation.
\end{proof}

Given two equitable partitions $\uu^*, \vv^* \in \equ(\uu)$, let $\uu^* \vee \vv^*$ be the partition given by \[\block(\uu^* \vee \vv^*,i):=\min \{\block(\uu^*,i),\block(\vv^*,i)\},\] and let $\uu^* \wedge \vv^*$ be the partition given by \[\block(\uu^* \wedge \vv^*,i):=\max \{\block(\uu^*,i),\block(\vv^*,i)\}.\]

\begin{proposition}
For two equitable partitions $\uu^*,\vv^*\in \equ(\uu)$, both $\uu^* \vee \vv^*$ and $\uu^* \wedge \vv^*$ are equitable partitions.
\end{proposition}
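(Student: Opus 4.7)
The plan is to verify that $\uu^*\vee\vv^*$ and $\uu^*\wedge\vv^*$ are legitimate partitioned words and that their balancing arrays are equitably filled.  The former is immediate: componentwise $\min$ and $\max$ of weakly decreasing sequences remain weakly decreasing, so the block-index sequences of $\uu^*\vee\vv^*$ and $\uu^*\wedge\vv^*$ still satisfy the monotonicity that a partitioned word requires.

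The backbone of my equitability argument is the pointwise identity
\[
M^{\uu^*\vee\vv^*}_{i,j} \;+\; M^{\uu^*\wedge\vv^*}_{i,j} \;=\; M^{\uu^*}_{i,j} \;+\; M^{\vv^*}_{i,j}
\]
for every row $i$ and column $j$, which I would establish by showing that the pair $\bigl(M^{\uu^*\vee\vv^*}_{i,j},\,M^{\uu^*\wedge\vv^*}_{i,j}\bigr)$ is always a permutation of $\bigl(M^{\uu^*}_{i,j},\,M^{\vv^*}_{i,j}\bigr)$.  Setting $b_i := \block(\uu^*,i)$ and $b_i' := \block(\vv^*,i)$, I would split into cases by the position of $b_i,b_i'$ relative to $j$.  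When $b_i$ and $b_i'$ are on the same side of $j$ (both $\ge j$ or both $<j$), the residues $(b_i-j)\bmod m$ and $(b_i'-j)\bmod m$ are ordered consistently with $b_i$ and $b_i'$, so $M^{\uu^*\vee\vv^*}_{i,j}$ is the boolean OR and $M^{\uu^*\wedge\vv^*}_{i,j}$ is the boolean AND of the two original entries; the identity $(a\vee b)+(a\wedge b)=a+b$ on $\{0,1\}$ closes this case.  When $b_i$ and $b_i'$ sit on opposite sides of $j$, the partition with the smaller block index is precisely the one whose row $i$ reaches $j$ (if at all) only by wrapping past $0$; then $\uu^*\vee\vv^*$ inherits row $i$ at column $j$ from that wrapping partition while $\uu^*\wedge\vv^*$ inherits it from the non-wrapping one, so the multiset $\{M^{\uu^*\vee\vv^*}_{i,j},M^{\uu^*\wedge\vv^*}_{i,j}\}$ again coincides with $\{M^{\uu^*}_{i,j},M^{\vv^*}_{i,j}\}$.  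Summing over $i$ gives $f_j(\uu^*\vee\vv^*) + f_j(\uu^*\wedge\vv^*) = 2e_j$ for every $j$, where $f_j(X^*)$ is the number of copies of $\bmone$ in the $j$-th column of $M^{X^*}$ and $e_j$ is the equitable count.

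To upgrade this sum to the individual equalities $f_j(\uu^*\vee\vv^*) = f_j(\uu^*\wedge\vv^*) = e_j$, I would induct on $d(\uu^*,\vv^*) := \sum_i |b_i - b_i'|$.  The base case $d=0$ is trivial, and when $\uu^*$ and $\vv^*$ are comparable in the componentwise order we have $\{\uu^*\vee\vv^*,\uu^*\wedge\vv^*\} = \{\uu^*,\vv^*\}$, so both are equitable by hypothesis.  When $\uu^*$ and $\vv^*$ are incomparable, Lemma~\ref{lem:covers} supplies a cover relation obtained by shifting a minimal left balanced block-suffix of $\uu^*$; chosen so that the shifted letters move toward their positions in $\vv^*$, this yields an equitable $\tilde\uu^*$ with $d(\tilde\uu^*,\vv^*) < d(\uu^*,\vv^*)$.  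The inductive hypothesis then gives equitability of $\tilde\uu^*\vee\vv^*$ and $\tilde\uu^*\wedge\vv^*$, and the final step is to transfer equitability back to $\uu^*\vee\vv^*$ and $\uu^*\wedge\vv^*$ by comparing balancing arrays across the single cover shift.  The main obstacle is exactly this transfer: I would need to verify that the minimal left balanced block-suffix can be chosen to lie coherently inside the swap set $\{i : b_i \ne b_i'\}$, so that the cover-shift commutes with $\vee$ and $\wedge$ and the induction closes without passing through non-equitable intermediate partitions.
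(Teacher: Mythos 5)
Your opening observation is correct but can be said in one line: since the pair $\bigl(\block(\uu^*\vee\vv^*,i),\block(\uu^*\wedge\vv^*,i)\bigr)$ is by definition a permutation of $\bigl(\block(\uu^*,i),\block(\vv^*,i)\bigr)$, and row $i$ of a balancing array depends only on $\block(\cdot,i)$ and $\uu_i$, the multiset identity for the entries $M_{i,j}$ is immediate---no case analysis on wrap-around is needed (and some of your intermediate claims, e.g.\ that $M^{\uu^*\vee\vv^*}_{i,j}$ is the boolean OR of the two original entries, are not actually true in general, though this does not matter). The real issue is that this identity only shows the two column counts \emph{average} to the equitable value: it cannot by itself exclude that $\uu^*\vee\vv^*$ is more than equitably filled in some column while $\uu^*\wedge\vv^*$ is correspondingly less than equitably filled there. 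So the entire burden of the proof falls on your induction on $d(\uu^*,\vv^*)$, and that is exactly where the argument is incomplete, as you acknowledge in your final sentence.

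Two steps of the induction are genuinely missing, not merely routine. First, you need a cover of $\uu^*$ whose shifted set (a minimal left balanced block-suffix, by Lemma~\ref{lem:covers}) consists only of indices $i$ with $\block(\uu^*,i)\neq\block(\vv^*,i)$, each moving toward its block in $\vv^*$; nothing guarantees that such a suffix exists, and a minimal suffix containing letters that already agree with $\vv^*$ (or that overshoot) need not decrease $d$. Second, even granted such a cover $\tilde\uu^*$, transferring equitability from $\tilde\uu^*\wedge\vv^*$ back to $\uu^*\wedge\vv^*$ requires that the sub-collection of shifted letters with $\block(\uu^*,i)$ strictly on the far side of $\block(\vv^*,i)$ be itself a balanced block-suffix of $\uu^*\wedge\vv^*$; this is precisely the commutation you flag as ``the main obstacle,'' and it is the heart of the matter rather than a verification. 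For contrast, the paper's proof sidesteps both difficulties: it starts from the leftmost equitable partition of Theorem~\ref{lem:rightmost}, whose block indices componentwise dominate those of every equitable partition (in particular of the componentwise maximum of $\uu^*$ and $\vv^*$), and repeatedly shifts a left balanced block-suffix containing a letter whose block still exceeds $\max\{\block(\uu^*,i),\block(\vv^*,i)\}$; each shift preserves equitability and the domination, and the process terminates exactly at $\uu^*\wedge\vv^*$. To close your argument you would essentially have to reconstruct that monotone walk, so as written the proposal has a genuine gap.
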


\begin{proof}
We give the argument for $\uu^* \wedge \vv^*$; the dual argument gives the corresponding result for $\uu^*\vee \vv^*$.  The leftmost equitable partition $\rightmost(\uu)$ is certainly a lower bound for both $\uu^*$ and $\vv^*$.  Suppose there exists an element $\uu_i$ at the end of a block in $\rightmost(\uu)$ such that $\block(\rightmost(\uu),i)\neq \max \{\block(\uu^*,i),\block(\vv^*,i)\}$.  Then $\uu_i$ is a member of a minimal left balanced block suffix $\us^*$ of $\rightmost(\uu)$, and by~\Cref{lem:covers} we may move all of the letters in $\us^*$ one block to the right to move up in the componentwise order.  The resulting partition $\mathsf{w}^*$ is still a lower bound for both $\uu^*$ and $\vv^*$, since any letter which we moved to the right must also be at least as far to the right in $\uu^*$ and $\vv^*$.  We may continue this process so long as there is an element $\uu_i$ such that $\block(\mathsf{w}^*,i)\neq \max \{\block(\uu^*,i),\block(\vv^*,i)\}$.  The process therefore terminates with $\uu^* \wedge \vv^*$.
\end{proof}

We conclude the following.

\begin{theorem}
The poset $\equ(\uu)$ is a distributive lattice.  The minimum element is the leftmost equitable partition and the maximum element is the rightmost equitable partition.
\label{thm:dis_lattice}
\end{theorem}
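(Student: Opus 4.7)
My plan is to exhibit the set of equitable partitions as a sublattice of a distributive lattice of integer tuples, after which distributivity becomes automatic. The preceding proposition already gives closure under $\vee$ and $\wedge$, and by inspection of the definitions these operations compute the join and meet in the componentwise order, so the set is already a lattice.

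To see distributivity, I would reduce to a standard fact. The map $\uu^* \mapsto (\block(\uu^*,i))_{i=1}^{N}$ embeds partitioned words of $\uu$ into $\{0,1,\ldots,m-1\}^{N}$, and by construction carries $\vee$ and $\wedge$ to componentwise $\min$ and $\max$. Since $\{0,1,\ldots,m-1\}$ is a chain---hence a distributive lattice---and since a finite product of distributive lattices is again distributive, the ambient set $\{0,1,\ldots,m-1\}^{N}$ is distributive under componentwise operations. The image of the equitable partitions under this embedding forms a sublattice by the closure just noted, and therefore inherits distributivity.

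Finally, the extreme elements of the lattice are identified by their defining characterizations: by Theorem~\ref{thm:unique_rightmost}, the rightmost equitable partition $\invf(\uu)$ sits componentwise at one extreme among all equitable partitions, and by Theorem~\ref{lem:rightmost}, the leftmost equitable partition $\rightmost(\uu)$ sits at the opposite extreme. Matching these with the direction of the componentwise order identifies them as the maximum and minimum of the lattice, as claimed.

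The main obstacle was really handled in the preceding proposition (closure of the equitable partitions under $\vee$ and $\wedge$); once that is in place, distributivity is immediate from the chain-product argument, and identifying the extreme elements is just a matter of matching definitions.
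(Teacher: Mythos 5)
Your proof is correct and is essentially the paper's own argument, just spelled out in more detail: the paper likewise derives distributivity from the fact that componentwise $\min$ and $\max$ distribute (your product-of-chains embedding makes this explicit), relies on the preceding proposition for closure, and cites Theorems~\ref{thm:unique_rightmost} and~\ref{lem:rightmost} for the extreme elements. No gaps.
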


\begin{proof}
Since the operations $\mathrm{max}$ and $\mathrm{min}$ distribute, we obtain the first statement.  The second statement follows from \Cref{lem:rightmost} and \Cref{thm:unique_rightmost}.
\end{proof}



%

\begin{remark}
\Cref{thm:dis_lattice} was found by analogy with stable marriages, as we now explain.  The theory of stable marriages is due to D.~Gale and L.~Shapley~\cite{gale1962college}.  Briefly, given $n$ men and $n$ women who have each individually totally ordered the opposite sex, an \defn{unstable set of marriages} is a bijection between the men and the women such that there exist a man and a woman (not married to each other) who prefer each other to their actual spouses.  A \defn{stable set of marriages} is a bijection between the men and women which is not unstable.  In~~\cite{gale1962college}, D.~Gale and L.~Shapley proved the existence of a set of stable marriages by giving an algorithm that produced one.  This work of D.~Gale and L.~Shapley has since found diverse applications---for example, it provided the mathematical underpinning for reform of residency matches~\cite{williams1981analysis}, such as the San Francisco match for ophthalmology~\cite{williams1996examining,williams1996match} and the National Resident Matching Program (NRMP), which assign graduating medical students to residency programs.

Although the men and women appear to be equals in the statement of the problem, D.~Gale and L.~Shapley's algorithm has the interesting property of not treating the two groups symmetrically.  In their algorithm, the members of one group act by proposing to the members of the other group---working down from the top of their list---while the other group continuously rejects all but the most favorable proposal they have yet received.  This asymmetry is highlighted by the outcome: when the men propose to the women, the output is the worst possible set of stable marriages for the women (in the sense that in any other possible set of stable marriages, every woman would be paired with a man no lower in her total order) and the best possible for the men.  This bias was originally raised in reference to the NRMP in~\cite{williams1981analysis}, where it was shown that the choice had been made to favor the interests of residency programs over those of students.  This bias has now been largely switched, with the consent of all parties; the San Francisco match switched in 1996~\cite{williams1996examining,williams1996match}, while the larger NRMP switched under public pressure two years later.

In~\cite{knuth1976mariages}, D.~Knuth credits J.~Conway with the observation that the set of all stable sets of marriages forms a distributive lattice under componentwise order (D.~Gale and L.~Shapley were already aware of the existence of intermediate stable sets of marriages)---depending on conventions, the minimum element is the best possible stable matching for the men (obtained by running the Gale-Shapley algorithm with men proposing), while the maximum element is the best for the women.  Although we are not aware of an equivalence between our work and the theory of stable marriages, we note that there is a certain similarity between the roughly symmetric~\Cref{map:rightmost,map:leftmost} and the Gale-Shapley proposing algorithms.\footnote{It is not too difficult to imagine a somewhat scandalous rephrasing of~\Cref{sec:real_world} using a polyamorous cul-de-sac.}  Our leftmost equitable partition and rightmost equitable partition are analogous to the male- and female-favoring stable sets of marriages. 
\label{rem:stable_marriages}
\end{remark}

\section{Applications}
\label{sec:applications}

\subsection{The Sweep Map}
\label{sec:general_sweep}

By taking $m$ sufficiently large, the modular sweep map emulates the sweep map of~\cite{armstrong2014sweep}, as we now explain.

Fix $\mathbf{a}:=(a_1,\ldots,a_n) \in \mathbb{Z}^n$, let $\mathbf{e}:=(e_1,\ldots,e_n) \in \mathbb{N}^n$, and define $\AZ$ to be the set of words containing $e_j$ copies of $a_j$ for $1 \leq j \leq n$.  For a word $\w=\w_1 \w_2 \cdots \w_{N} \in \AZ$, define the \defn{level} of $\w_j$ to be the integer $\ell_j:=\sum_{i=1}^j \w_i$ for $1 \leq j \leq N$.

The \defn{sweep map} is the function $\swg:\AZ \to \AZ$ that sorts $\w \in \AZ$ according to its levels as follows: initialize $\uu=\emptyset$ to be the empty word.  For $k=-1,-2,-3,\ldots$ and then $k=\ldots,3,2,1,0$, read $w$ from right to left and append to $\uu$ all letters $\w_j$ whose level $\ell_j$ is equal to $k$.  Define $\swg(\w):=\uu$.

\begin{example}[{\cite[Figure 3]{armstrong2014sweep}}]
Let $n=2$, $(a_1,a_2)=(-2,3)$, and $(e_1,e_2)=(10,8)$.  Writing $\overline{i}:=-i$, we begin with a word $\w \in \AZ$, compute its levels, and find $\uu=\swg(\w)$ as described above:
\[\begin{array}{rcccccccccccccccccc}
\ell:&3&1&4&2&0& \overline{2}& 1&\overline{1}&\overline{3}&\overline{5}&\overline{7}&\overline{4}&\overline{1}&\overline{3}&\overline{5}&\overline{2}&1&4\\
\w:&3&\overline{2}&3&\overline{2}&\overline{2}&\overline{2}&3&\overline{2}&\overline{2}&\overline{2}&\overline{2}&3&3&\overline{2}&\overline{2}&3&3&3\end{array},\]

\[\begin{array}{rcccccccccccccccccccc}
\ell: &\bar{1}&\bar{1}&\bar{2}&\bar{2}&\bar{3}&\bar{3}&\bar{4}&\bar{5}&\bar{5}&\bar{6}&\bar{7}&\cdots&4&4&3&2&1&1&1&0 \\
\uu:&3 &\overline{2} &3& \overline{2}&\overline{2}&\overline{2}&3&\overline{2}&\overline{2}&\cdot&\overline{2}&\cdots&3&3&3&\overline{2}&3&3&\overline{2}&\overline{2}\end{array}.\]

\label{ex:zeta_final}
\end{example}




We may interpret elements of $\AZ$ as paths in the lattice $\mathbb{Z}^n$, with standard basis denoted ${\sf a}_1,{\sf a}_2,\ldots,{\sf a}_n$.   By reading the letter $a_i$ as a step in direction ${\sf dir}(a_i):={\sf a}_i$, a word $\w \in \AZ$ is interpreted as a lattice path $\pat(\w)$ from $(0,0,\ldots,0)$ to $(e_1,e_2,\ldots,e_n)$ using steps ${\sf a}_i$.
The \defn{endpoint} of the step corresponding to $\w_j$ is the point $\sum_{i=1}^j {\sf dir}(\w_i)$.

\begin{example}
Continuing~\Cref{ex:zeta_final}, we may draw $\w$ and $\uu$ as lattice paths from $(0,0)$ to $(10,8)$, where the number $3$ in $\w$ or $\uu$ represents a step north, while a $\bar{2}$ represents a step east.  The endpoints are indicated by black circles, with the corresponding levels from~\Cref{ex:zeta_final} marked inside.

\[
\raisebox{-0.5\height}{\includegraphics[width=.45\linewidth]{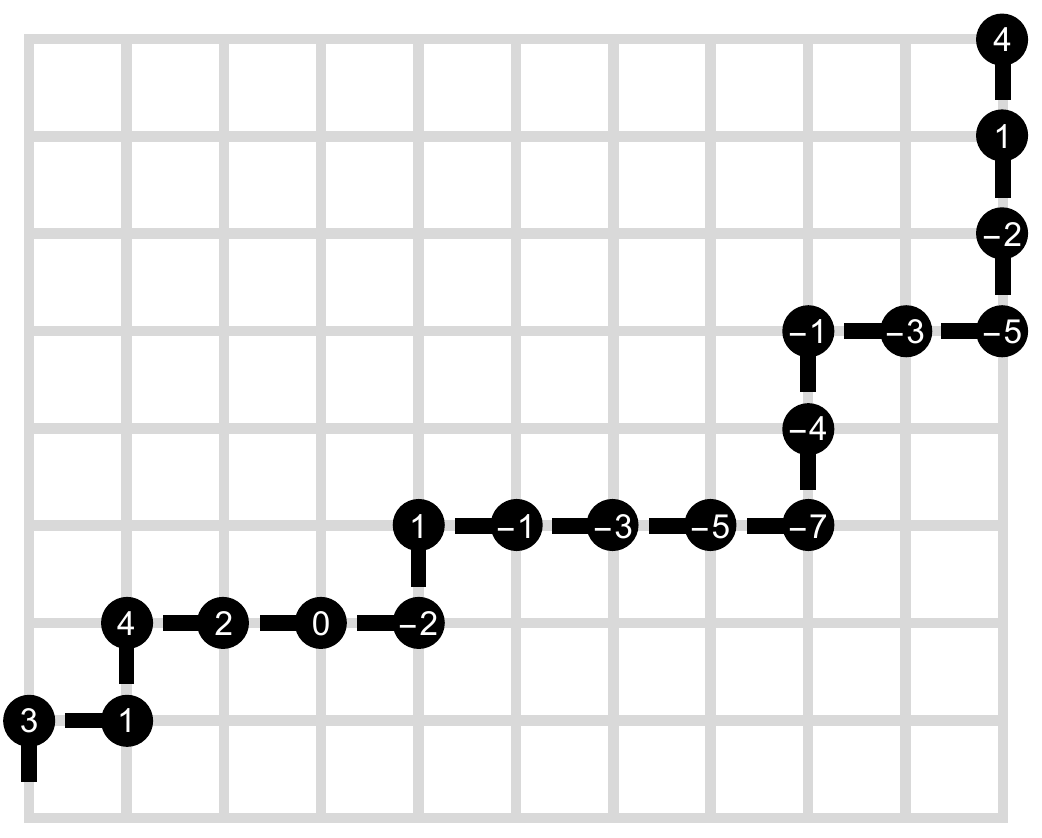}} \xmapsto[\swg]{} \raisebox{-0.5\height}{\includegraphics[width=.45\linewidth]{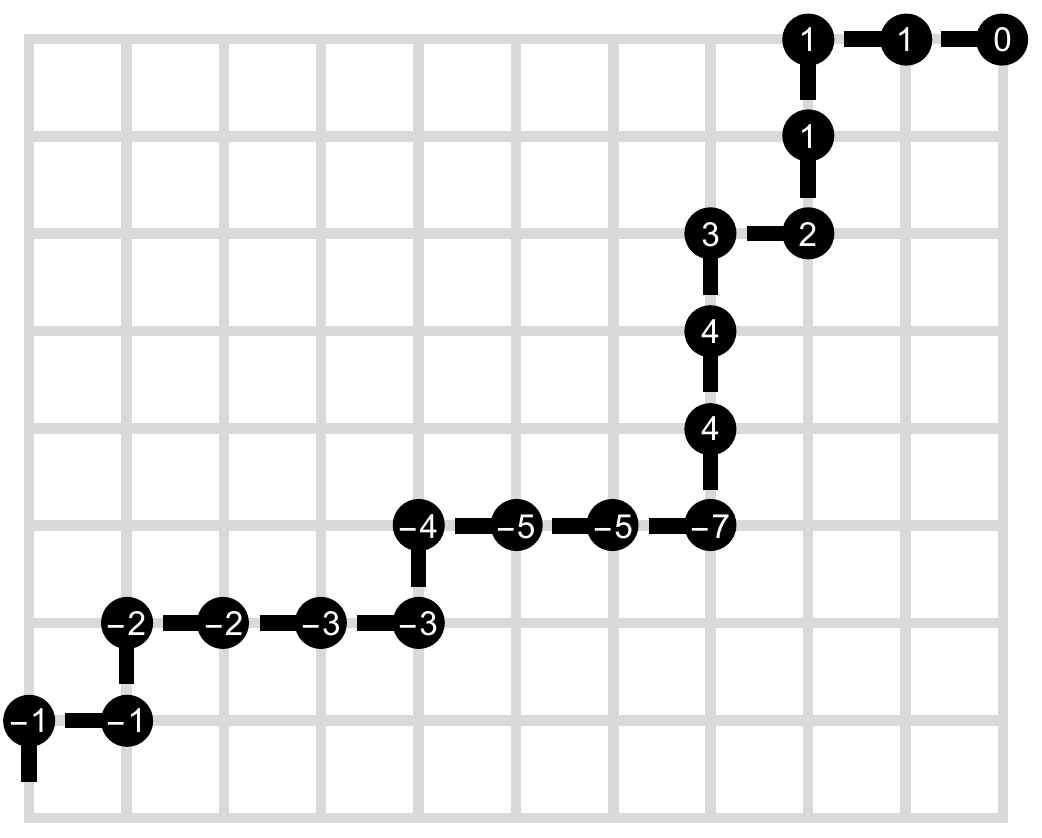}}
\]
\label{ex:zeta_final2}
\end{example}

  One can now visualize the sweep map as the process of \emph{sweeping} the affine hyperplane defined by ${\mathcal H}_{\mathbf{a},k}:=\{\mathbf{x}:\mathbf{x} \bullet \mathbf{a}=k\}$ first down from $k=-1$ to $k=-\infty$, and then down from  $k=\infty$ to $k=0$---recording the letters corresponding to the endpoints of the steps of $\pat(\w)$ in the order they are intersected by this hyperplane.  \Cref{fig:ratpaths} illustrates this geometric interpretation for $n=2$ with $\mathbf{a}=(-4,7)$ and $\mathbf{e}=(7,4)$.

\begin{figure}[htbp]
\begin{alignat*}{4}
\framebox{\raisebox{-0.5\height}{\includegraphics[width=.13\linewidth]{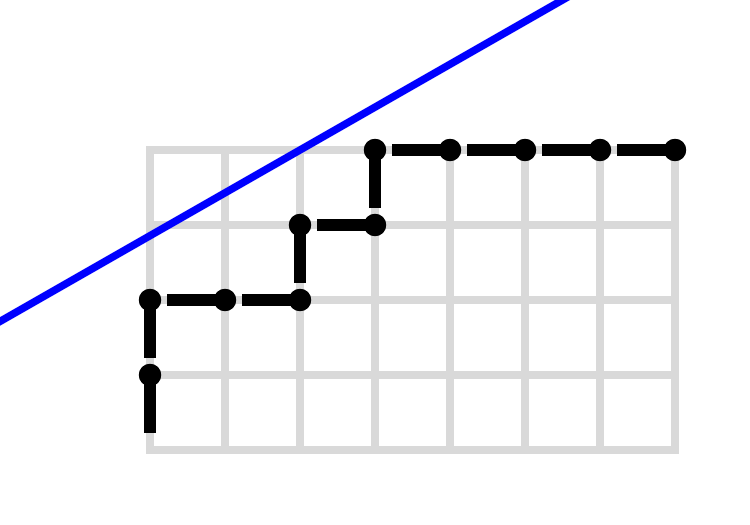}\includegraphics[width=.13\linewidth]{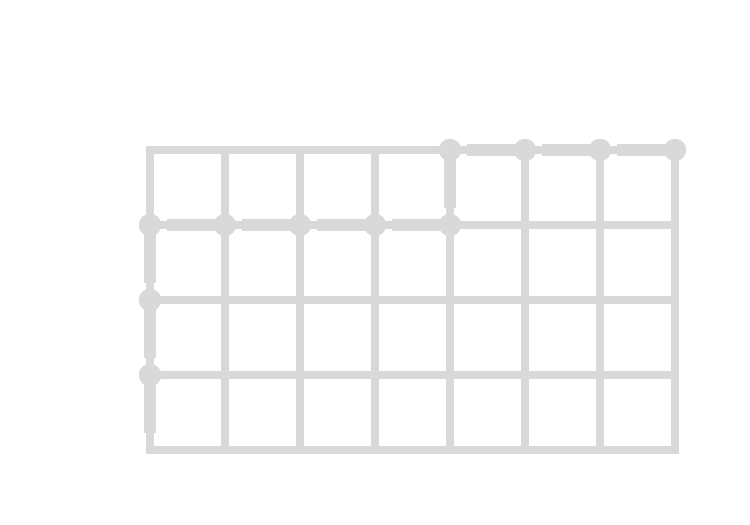}}} &&\to
\framebox{\raisebox{-0.5\height}{\includegraphics[width=.13\linewidth]{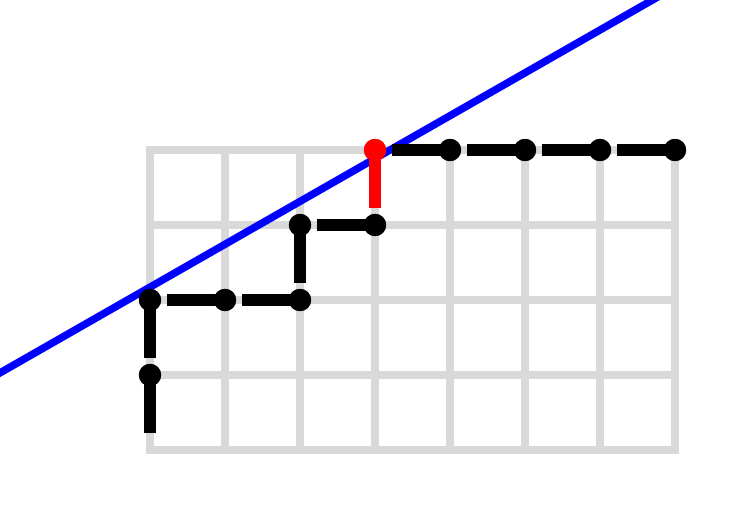}\includegraphics[width=.13\linewidth]{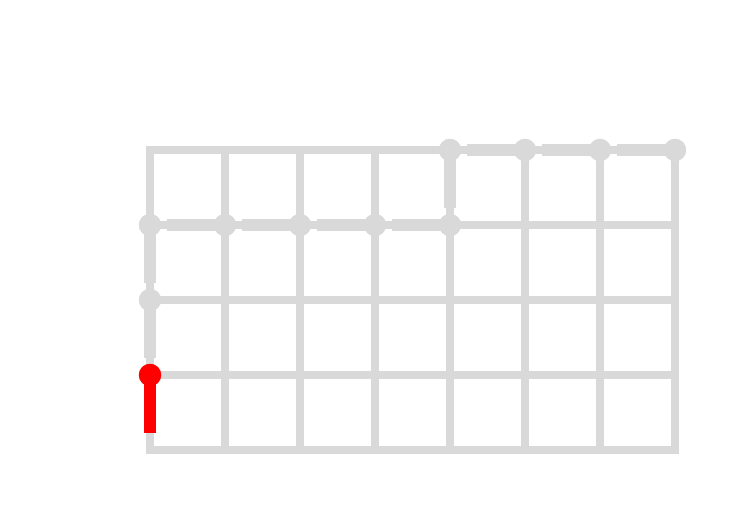}}} &&\to
\framebox{\raisebox{-0.5\height}{\includegraphics[width=.13\linewidth]{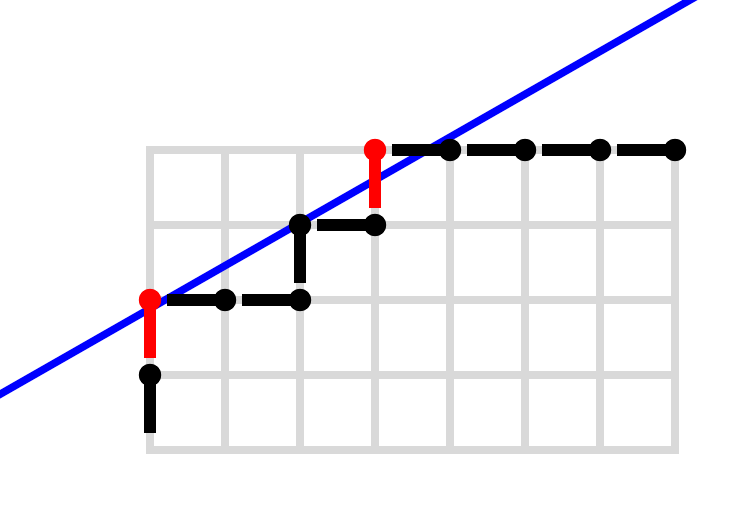}\includegraphics[width=.13\linewidth]{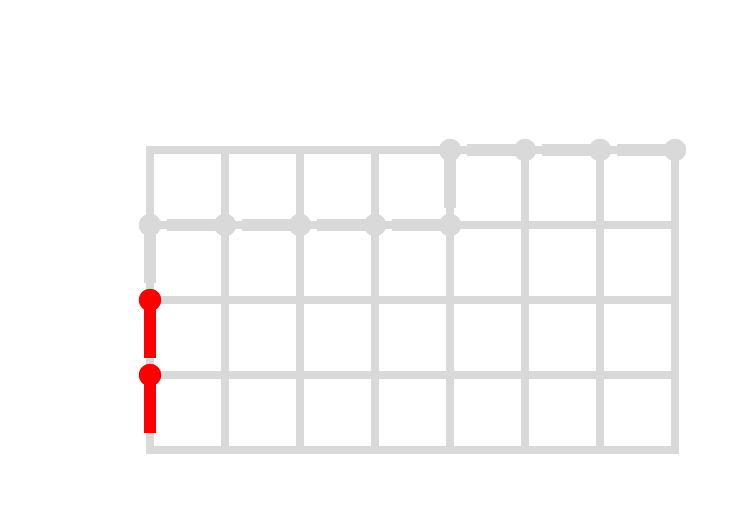}}} &&\to \\
\framebox{\raisebox{-0.5\height}{\includegraphics[width=.13\linewidth]{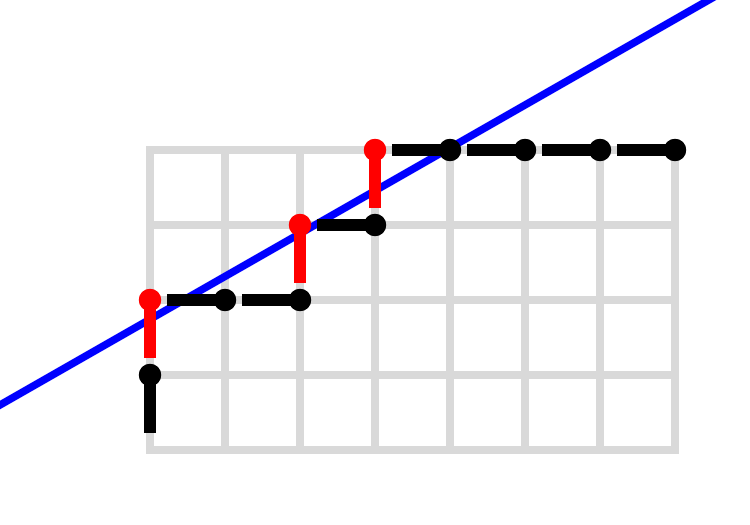}\includegraphics[width=.13\linewidth]{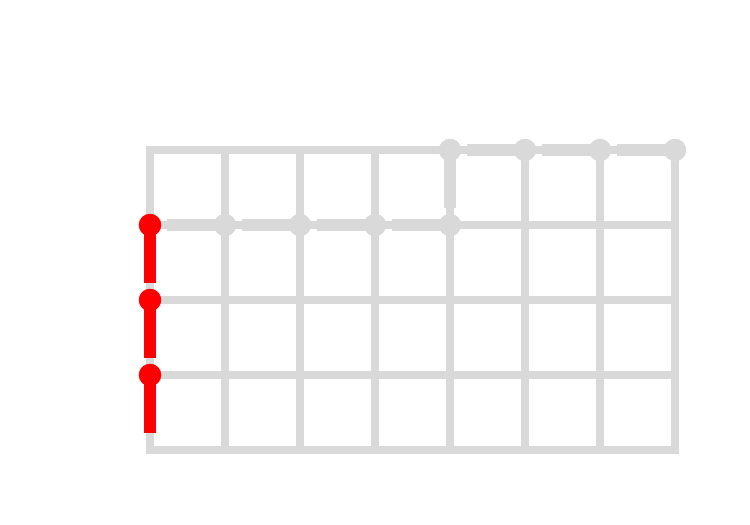}}} &&\to
\framebox{\raisebox{-0.5\height}{\includegraphics[width=.13\linewidth]{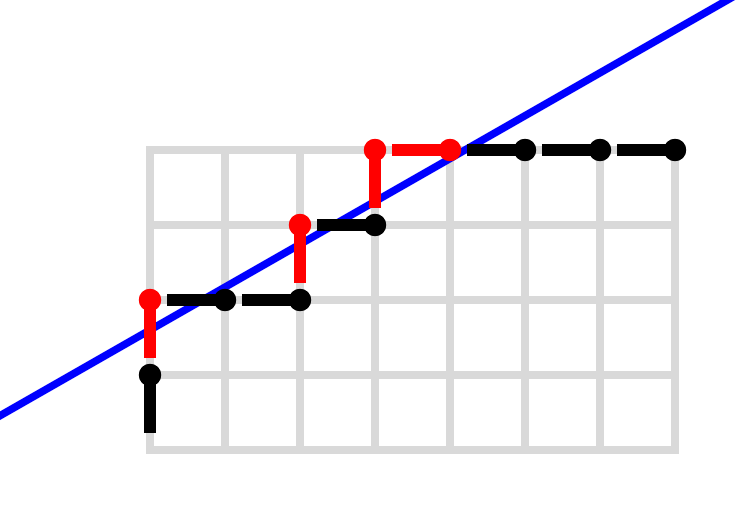}\includegraphics[width=.13\linewidth]{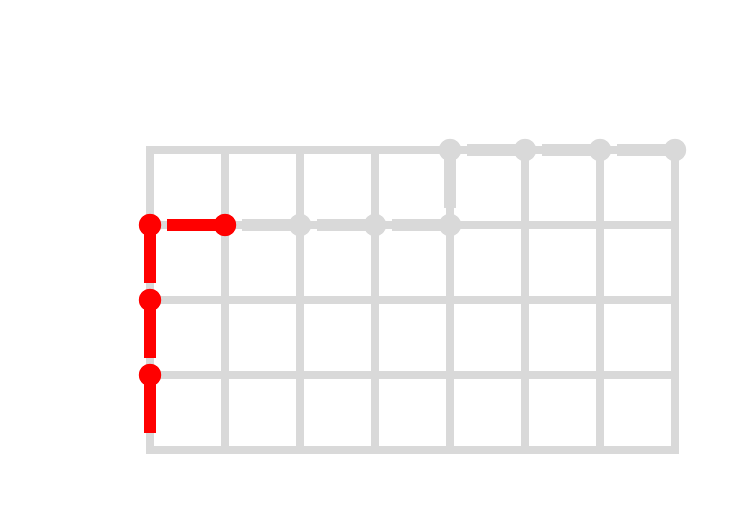}}} &&\to
\framebox{\raisebox{-0.5\height}{\includegraphics[width=.13\linewidth]{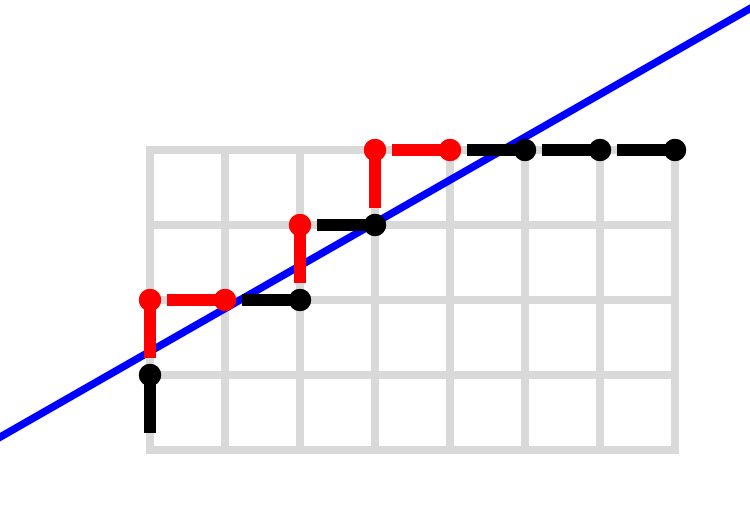}\includegraphics[width=.13\linewidth]{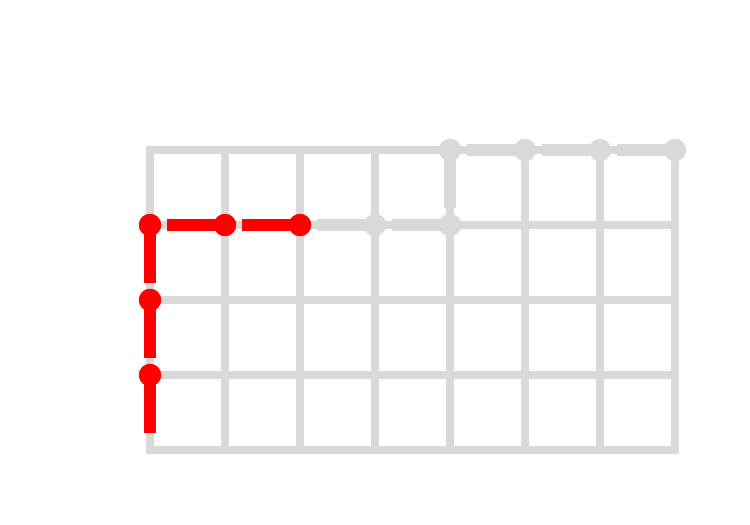}}} &&\to \\
\framebox{\raisebox{-0.5\height}{\includegraphics[width=.13\linewidth]{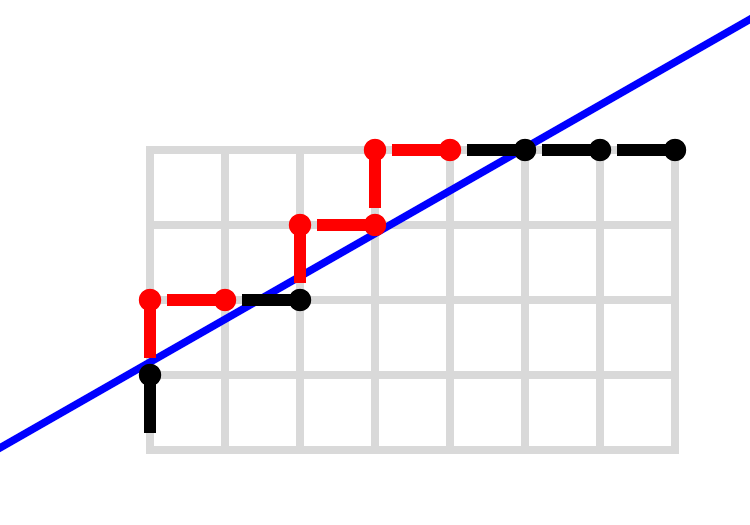}\includegraphics[width=.13\linewidth]{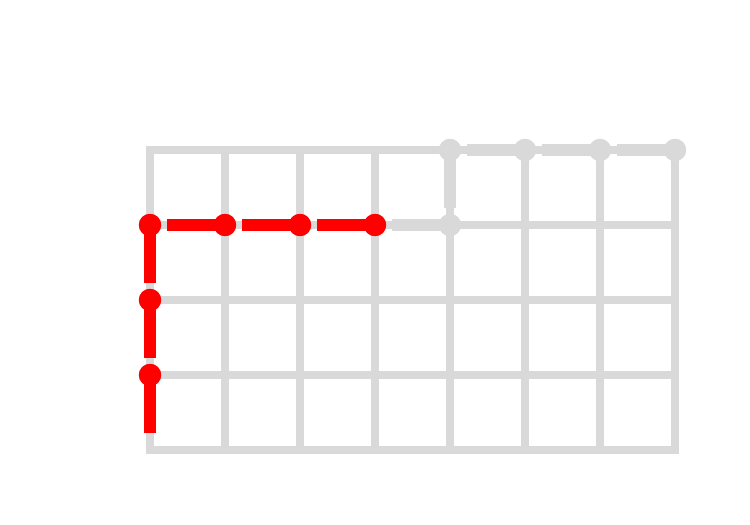}}} &&\to
\framebox{\raisebox{-0.5\height}{\includegraphics[width=.13\linewidth]{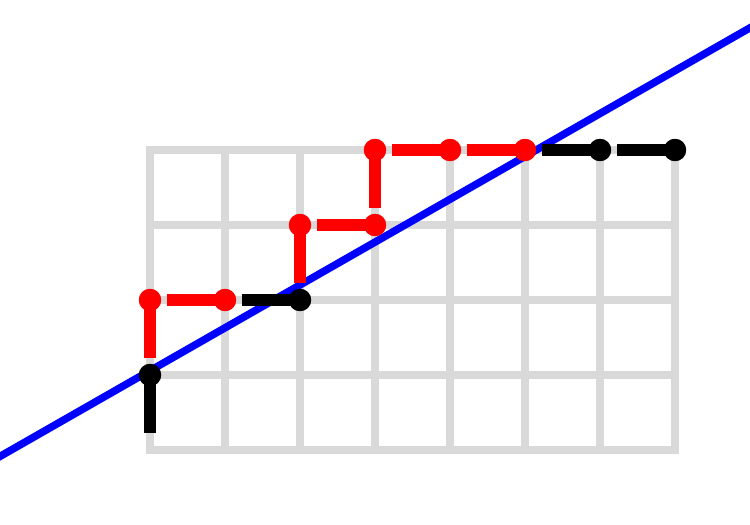}\includegraphics[width=.13\linewidth]{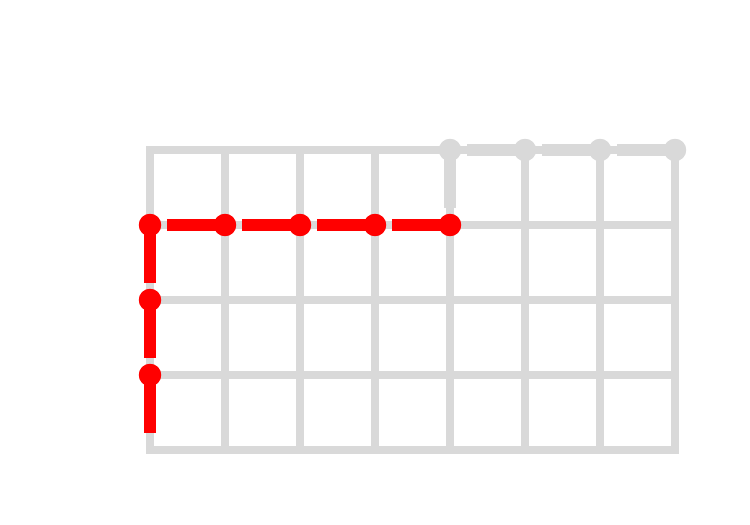}}} &&\to
\framebox{\raisebox{-0.5\height}{\includegraphics[width=.13\linewidth]{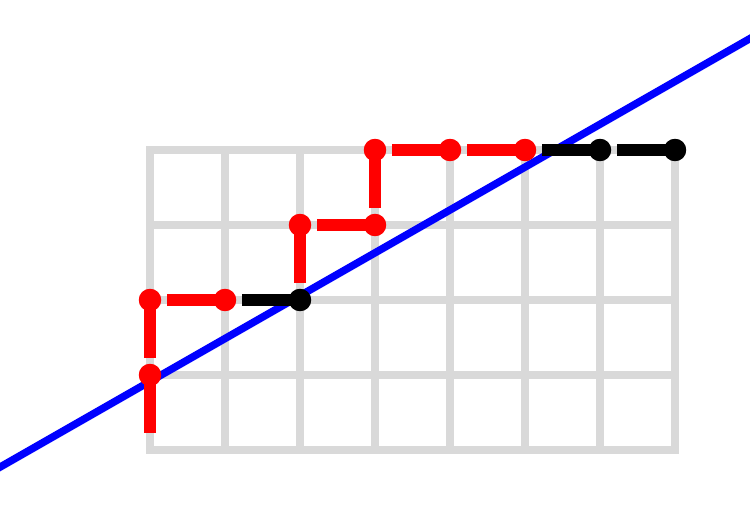}\includegraphics[width=.13\linewidth]{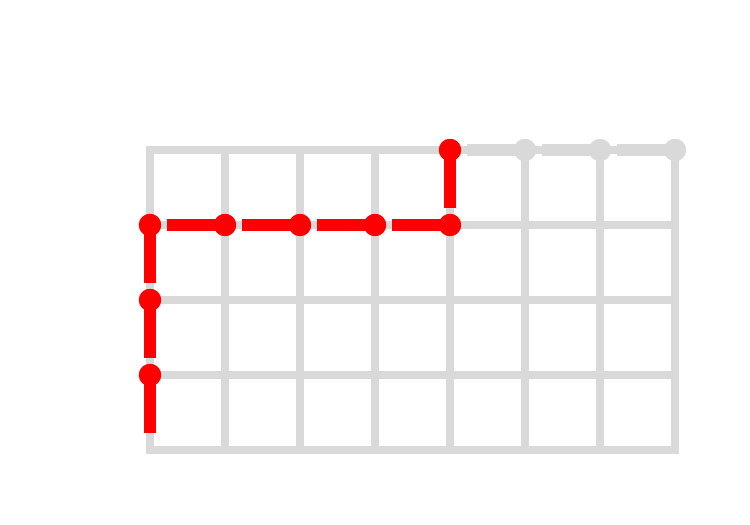}}} &&\to \\
\framebox{\raisebox{-0.5\height}{\includegraphics[width=.13\linewidth]{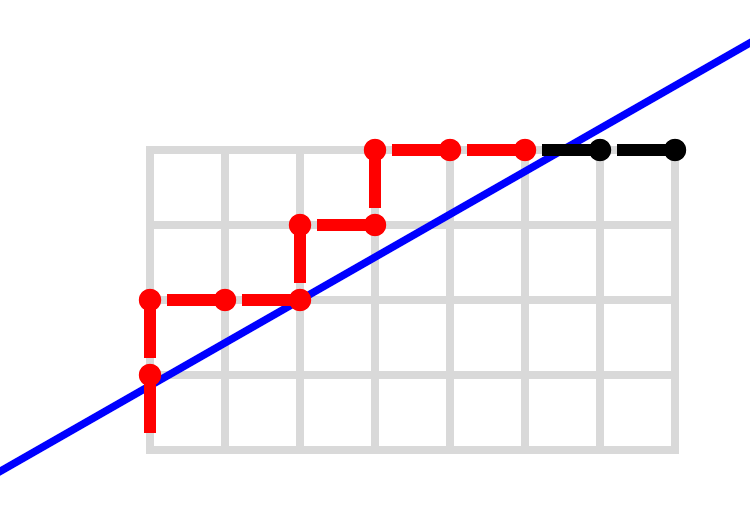}\includegraphics[width=.13\linewidth]{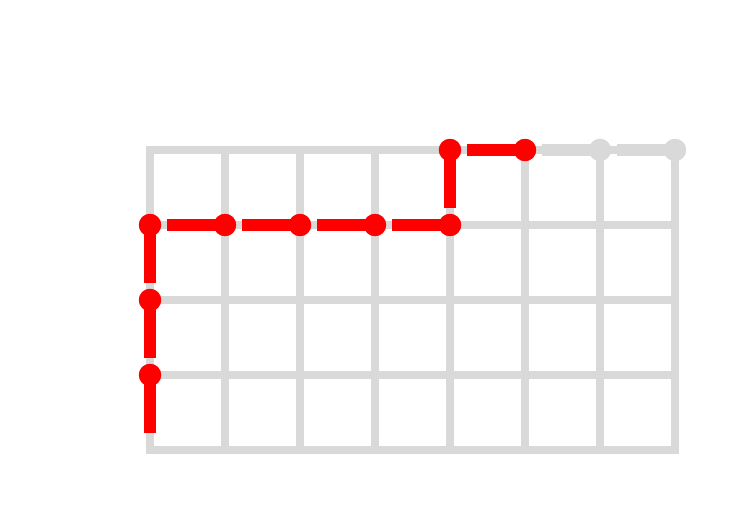}}} &&\to
\framebox{\raisebox{-0.5\height}{\includegraphics[width=.13\linewidth]{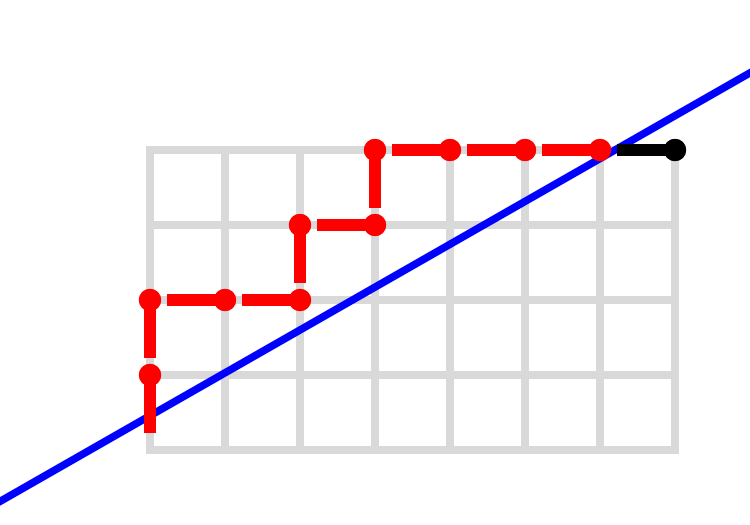}\includegraphics[width=.13\linewidth]{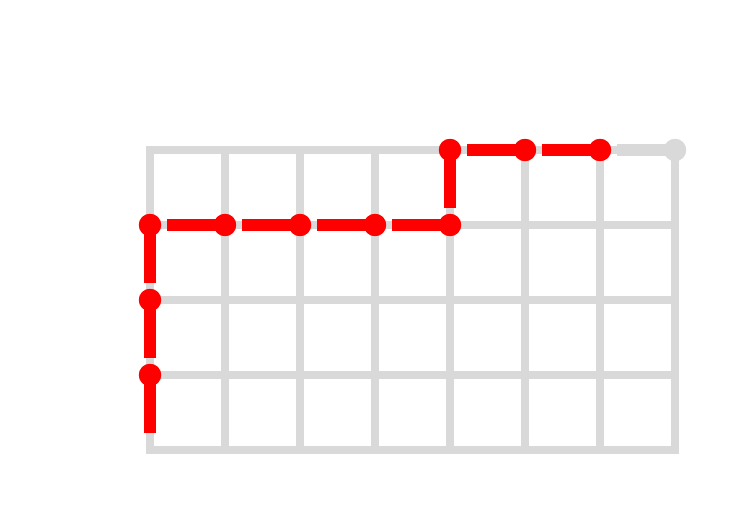}}} &&\to
\framebox{\raisebox{-0.5\height}{\includegraphics[width=.13\linewidth]{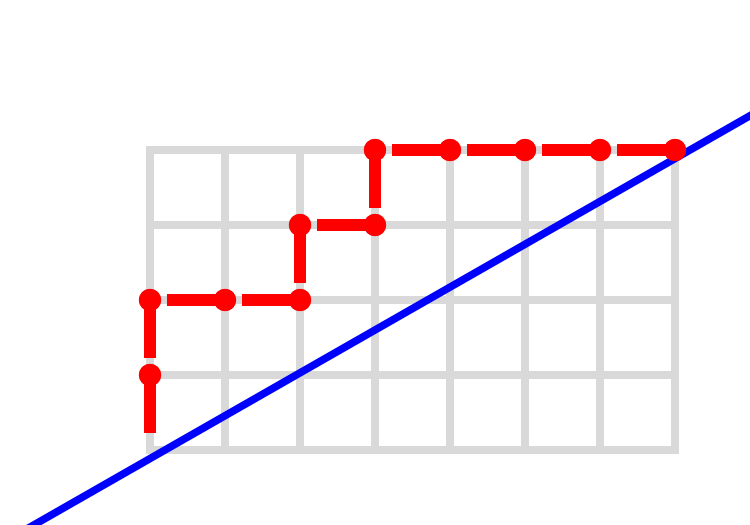}\includegraphics[width=.13\linewidth]{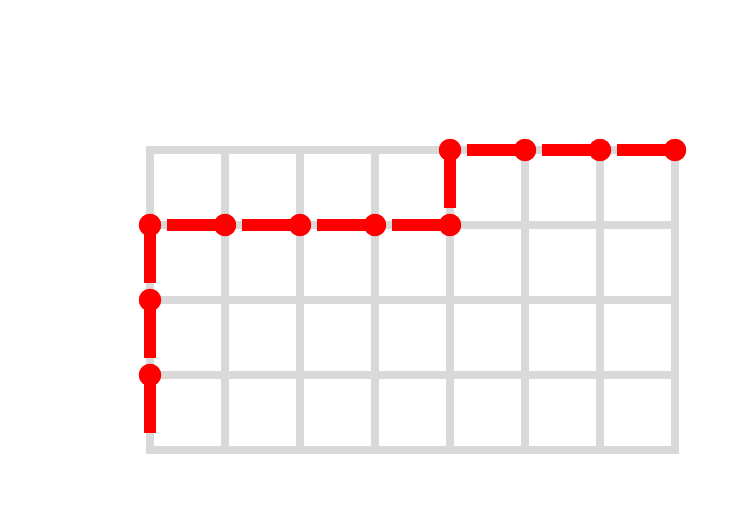}}} &&\makebox[\widthof{$\to$}][c]{$.$}
\end{alignat*}
\caption{An illustration of the geometric interpretation of $\swg$.  To form the right path, the steps of the left path are rearranged according to the order in which they are encountered by a line of slope $4/7$ sweeping down from above.}
\label{fig:ratpaths}
\end{figure}

This definition is slightly misleading---in order to have agreement with $\swg$, one must respect the tiebreaking ``right to left'' condition when two points are intersected by ${\mathcal H}_{\mathbf{a},k}$ \emph{at the same time}.  It is therefore nicer in this geometric interpretation to break these ties by slightly perturbing the vector $\mathbf{a}$, so that we allow $k$ to vary \emph{continuously} first from $k=-1$ to $k=-\infty$ and then from $k=\infty$ to $k=0$ (and still record the letters corresponding to the endpoints of $\pat(\w)$ in the order they are intersected by ${\mathcal H}_{\mathbf{a},k}$).  To break ties, we perturb $\mathbf{a}=(a_1,\ldots,a_n)$ by choosing $\mathbf{a}'=(a_1',a_2',\ldots,a_n')$ linearly independent over $\mathbb{Q}$ such that $a_i'>a_i$ for all $i$, and such that there is no lattice point $\mathbf{x}$  in the hypercube spanned by $\{e_i {\sf a}_i\}_{i=1}^n$ such that $\mathbf{x}\bullet \mathbf{a}<0$ and $\mathbf{x}\bullet \mathbf{a}'>0$.


\begin{theorem}[{\cite[Conjecture 3.3 (a)]{armstrong2014sweep}}]
\label{thm:sweep_bij}
The sweep map is a bijection \[\swg: \AZ \to \AZ.\]
\end{theorem}
\begin{proof}
Since the modular sweep map only \emph{permutes} its input $\w$, it restricts to a bijection on words with a specified content.  We claim that by choosing $m$ large enough, the modular sweep map agrees with the sweep map when the letters $a_j$ and levels $\ell_j$ are taken modulo $m$.

When computing the modular sweep map, so long as $m$ is large enough that
\begin{enumerate}
    \item all distinct letters correspond to distinct letters modulo $m$,
    \item all distinct levels correspond to distinct modular levels, and
    \item modulo $m$, all negative levels are larger than all positive levels,
\end{enumerate}
    then by the definitions of $\swg$ and $\sw$, the images of the sweep and modular sweep maps will be constructed by visiting the same letters in the same order---and so will agree modulo $m$.  Increasing $m$ further does not result in a violation of these conditions, and we conclude that the modular sweep map ``stabilizes'' after the point that it begins emulating the sweep map.  By ``stabilize,'' we mean that that the only change in the output of $\sw$ when the modulus changes from $m$ to $m+1$ is the addition of a single empty new block between the negative and positive levels modulo $m$.

    We can achieve these conditions simultaneously for all words in $\AZ$ by taking $m>\sum_{j=1}^{n} e_j|a_j|$.  Note that this bound does not depend on the levels in the original word---only on the content, which is unchanged under the sweep or modular sweep map.  We conclude that by taking $m$ large enough and the letters $a_i$ modulo $m$, we may use the inverse modular sweep map to compute the inverse sweep map.
\end{proof}
\begin{example}
Continuing~\Cref{ex:zeta_final,ex:zeta_final2}, we verify the three conditions in~\Cref{thm:sweep_bij} for $\sw$ to emulate $\swg$ are satisfied when $m \geq 12$ (this choice of $m$ is only valid for this particular word, not for all rearrangements).  Fixing $m\geq 12$ and writing $\overline{i}:=-i \mod m$, we take the word in~\Cref{ex:zeta_final} modulo $m$ and use the \emph{modular} sweep map to compute that $\sw(\w \mod m)$ agrees with $\uu \mod m$:
\[\begin{array}{rcccccccccccccccccc}
\ell \mod m:&3&1&4&2&0& \overline{2}& 1&\overline{1}&\overline{3}&\overline{5}&\overline{7}&\overline{4}&\overline{1}&\overline{3}&\overline{5}&\overline{2}&1&4\\
\w \mod m:&3&\overline{2}&3&\overline{2}&\overline{2}&\overline{2}&3&\overline{2}&\overline{2}&\overline{2}&\overline{2}&3&3&\overline{2}&\overline{2}&3&3&3\end{array},\]

\[\begin{array}{rcccccccccccccccccccc}
\ell \mod m: &\bar{1}&\bar{1}&\bar{2}&\bar{2}&\bar{3}&\bar{3}&\bar{4}&\bar{5}&\bar{5}&\bar{6}&\bar{7}&\cdots&4&4&3&2&1&1&1&0 \\
\uu \mod m:&3 &\overline{2} &3& \overline{2}&\overline{2}&\overline{2}&3&\overline{2}&\overline{2}&\cdot&\overline{2}&\cdots&3&3&3&\overline{2}&3&3&\overline{2}&\overline{2}\end{array}.\]

The effect of increasing $m$ will be only to increase the number of empty blocks in the word $\sw(\w \mod m)$ between the levels $\bar{7}$ and $4$.

We now indicate the straightforward method to compute the inverse sweep map starting only with the word $\uu$.  \Cref{thm:sweep_bij} tells us that we can take $m>3 \cdot 8 + 2 \cdot 10=44$.  We may apply $\invf$ with this modulus to recover the missing data of the modular levels, and then run $\swp$ and discard the modulus (restoring the letters to their original values in $\mathbb{Z}$) to recover $\w$.
\end{example}

\begin{remark}
It might seem that the definitions above are a weaker formulation than the one used in~\cite{armstrong2014sweep}, which uses an arbitrary alphabet (not necessarily $\mathbb{Z}$) along with a weight function from the alphabet to $\mathbb{Z}$.  In that language, our formulation has replaced the alphabet by its image under the weight function, so that it might seem that we have restricted ourselves only to the case of injective weight functions.  In fact, the two formulations are equivalent for the following two reasons:
\begin{enumerate}
\item $\invf$ depends only on the weight function, and not the alphabet; and
\item $\iswp$ traverses the letters in a specified order.
\end{enumerate}
We recover the formulation using arbitrary alphabets and weight functions as follows: use $\invf$ on the image of the word by the weight function; then apply $\iswp$, recording the letter from the alphabet (rather than its image under the weight function).
\label{rem:simple}
\end{remark}

\begin{example}
We give an example of~\Cref{rem:simple} using the modular sweep map; the interested reader can adapt this to the sweep map using~\Cref{thm:sweep_bij}.

Fix $m=5$ and consider the word with distinct letters $\uu = a_1 b_3 c_4 d_2 e_1 f_4 g_3$.  Define a weight function that maps a letter to its subscript.  Using this weight function to run $\invf$, as in~\Cref{ex:rightmost_eq2}, results in the partitioned word
$\cdot|a_1b_3|c_4d_2|e_1|f_4g_3$.  Using this partitioned word, the output of $\iswp$ is $d_2 g_3 e_1 f_4 b_3 c_4 a_1$, as in~\Cref{ex:successful_zeta}.  Note that there is, for example, no confusion between the letters $e_1$ and $a_1$, even though these letters have the same weight.
\end{example}

\subsection{Sweeping Dyck Words}
\label{sec:dyck}
Preserving the notation of the previous section, let $\AN \subseteq \AZ$ be the subset of words in $\AZ$ whose levels are all nonnegative.  Following~\cite{armstrong2014sweep}, we call $\AN$ the set of \defn{Dyck words}.

Restricting the interpretation of elements of $\AZ$, an element of $\AN$ may be seen as a lattice path $\pat(\w)$ from $(0,0,\ldots,0)$ to $(e_1,e_2,\ldots,e_n)$ using steps ${\sf a}_i$, such that the vector dot product $\mathbf{x} \bullet \mathbf{a}\geq 0$ for any $\mathbf{x}$ on $\pat(\w)$.

\begin{theorem}[{\cite[Conjecture 3.3 (b)]{armstrong2014sweep}}]
\label{thm:sweep_dyck}
The sweep map is a bijection \[\swg: \AN \to \AN.\]
\end{theorem}


\begin{proof}
The following argument was suggested by M.~Thiel, generalizing~\cite[Proposition 3.2]{armstrong2014sweep}.  (C.~Reutenauer has developed a similar argument~\cite{reut2013}.)

Let $\uu=\swg(\w_1\w_2\cdots\w_N)$.  We show that for any $j$, the initial segment $\uu_1\uu_2\cdots \uu_j$ ends on or above $\mathcal{H}_{\mathbf{a},0}$, from which we may conclude that $\uu\in \AN$.  Fix $1 \leq j \leq N$.  By definition of $\swg$, for any such $j$, there exists a $k$ for which all letters of $\w$ arranged into this initial segment of $\uu$ have levels greater than or equal to $k$.  The letters in $\w$ that have been arranged to form the initial segment of $\uu$ correspond to those steps in $\pat(\w)$ with endpoints that lie on or above $\mathcal{H}_{\mathbf{a}',k}$.  Define a \defn{connected piece} of these steps of $\pat(\w)$ on or above $\mathcal{H}_{\mathbf{a}',k}$ to be a maximal collection of steps coming from adjacent letters in $\w$.   Translating any connected piece to start at the origin, we see that each connected piece separately satisfies the Dyck word condition to lie on or above $\mathcal{H}_{\mathbf{a}',0}$, so that their rearrangement \emph{ends} on or above $\mathcal{H}_{\mathbf{a}',0}$.  Since we have chosen $\mathbf{a}'$ so that the sweep map adds letters one at a time, this holds for any $j$.
\end{proof}

\medskip

\subsection{The Zeta Map}\label{sec:zeta} Finally, we consider the special case of Dyck words for an alphabet $\{a,b\}$ of size $n=2$, such that $a>0$ and $b<0$ and where the letter $a$ occurs $-b$ times and the letter $b$ occurs $a$ times.  We shall write this set of Dyck words as $\DD_{a,b}$---for $a$ and $b$ relatively prime, these paths are of fundamental importance for the study of rational (type $A$) Catalan combinatorics~\cite{armstrong2013rational,armstrong2014rational,gorsky2016affine,bodnar2015cyclic,ceballos2016combinatorics,sulzgruber2015rational,xin2015efficient,garsia2016inverting}, but we do not need the assumption of relative primality here.

By~\cite[Table 1]{armstrong2014sweep} and~\cite[Theorem 4.8, Lemma 4.10, Theorem 4.12]{armstrong2014sweep}, the \defn{zeta map} may be defined as a variant of the sweep map $\zeta:\DD_{a,b}\to \DD_{a,b}$ that sorts $\w \in \DD_{a,b}$ as follows: initialize $\uu=\emptyset$ to be the empty word.  For $k=0,1,2,\ldots$ and then $k=\ldots,-3,-2,-1$, read $\w$ from left to right and append to $\uu$ all letters $\w_j$ whose level $\ell_j$ is equal to $k$.  Define $\zeta(\w):=\uu$.

We have the following corollary of~\Cref{thm:sweep_bij}, which is of independent interest.

\begin{corollary}[Zeta for Rational Dyck Paths]
\label{cor:zeta_bij}
The zeta map is a bijection \[\zeta: \DD_{a,b} \to \DD_{a,b}.\]
\end{corollary}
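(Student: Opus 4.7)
The plan is to deduce Corollary~\ref{cor:zeta_bij} from Theorem~\ref{thm:sweep_bij} by recognizing that the zeta map on $\DD_{a,b}$ is, up to bijective reformulation, the same as the sweep map on $\DD_{a,b}$.  Both $\zeta$ and $\swg$ sort the letters of $\w$ into the same level blocks indexed by the integer levels $\ell_j$; they differ only in the order in which the blocks are concatenated (ascending vs.\ descending in $k$) and in the reading direction within a block (left-to-right vs.\ right-to-left).  Each of these discrepancies is a bijective, level-block-preserving transformation, so the inversion of $\swg$ furnished in Section~\ref{sec:sweep} yields an inversion of $\zeta$ as well.

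Concretely, I would first verify that $\zeta$ maps $\DD_{a,b}$ into itself by adapting the geometric argument in the proof of Theorem~\ref{thm:sweep_dyck} (due to Thiel).  For each initial segment $\uu_1\cdots \uu_j$ of $\zeta(\w)$ there is a threshold level $k$ such that this segment consists of precisely those steps of $\pat(\w)$ whose endpoints lie in the half-space $\{x : x\cdot a' \leq k\}$ (with $a'$ the perturbation of $a$ used in Section~\ref{sec:dyck}).  These steps decompose into connected pieces along $\pat(\w)$, each of which is individually Dyck when translated to the origin, so their rearrangement still ends weakly above $\mathcal{H}_{a',0}$.  This exactly mirrors the Thiel argument, with the only change being that sweeping upward from $k=0$ now records the letters in ascending rather than descending order of level.

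Next, I would write $\zeta = \f \circ p_\zeta$, where $p_\zeta$ is a ``zeta-presweep'' that records the level-block data of $\w$.  As in Section~\ref{sec:inv_modular_presweep}, $p_\zeta$ is trivially invertible by an obvious left-to-right analogue of Algorithm~\ref{map:backwards}; the nontrivial content is inverting $\f$ on the image.  But the combinatorics of equitable and successful partitions developed in Section~\ref{sec:remember}---including Algorithms~\ref{map:leftmost} and \ref{map:success}, and the uniqueness statements in Theorems~\ref{thm:unique_rightmost}, \ref{thm:unique_succ}, \ref{thm:right_eq_succ}---depends only on the underlying block structure and the balancing-array condition, not on the specific traversal conventions used in $\swg$ or $\zeta$.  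Thus these results apply verbatim to invert $\f$ at the zeta-presweep of any $\w \in \DD_{a,b}$, giving the desired inverse to $\zeta$.

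The main obstacle is the preservation of the Dyck property, since this is the one place where the finer structure of $\DD_{a,b}$ (as a subset of $\AN$ cut out by the nonnegativity of all levels) is genuinely used; the rest of the argument is a transcription of the sweep-map framework with the orientation conventions flipped.  Once Dyckness is certified via the hyperplane-sweeping argument, bijectivity of $\zeta:\DD_{a,b}\to \DD_{a,b}$ is immediate from Theorem~\ref{thm:sweep_bij} together with the observation that $\zeta$ and $\swg|_{\DD_{a,b}}$ record the same level-block partition of $\w$.
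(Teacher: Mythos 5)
Your high-level plan---reduce $\zeta$ to the sweep-map machinery---is the right instinct, but there is a genuine gap precisely at the step you identify as the main obstacle, namely Dyck-preservation. Thiel's argument for $\swg$ works because there the initial segment of the output consists of the steps of $\pat(\w)$ lying \emph{above} a descending threshold: each maximal consecutive run of such steps is entered from strictly below the threshold and stays weakly above it afterwards, so each run really is ``individually Dyck'' after translation to the origin. For $\zeta$ the threshold ascends from $0$, so a maximal consecutive run of selected steps is entered from strictly \emph{above} the threshold and descends down to it; such a run typically has negative total and is not Dyck after translation. (Concretely, for $a=3$, $b=-2$ and $\w=3,-2,3,-2,-2$, a single step $-2$ forms a connected piece of an initial segment of $\zeta(\w)$, and it is certainly not Dyck on its own.) The correct mirrored argument must instead control the \emph{complement}: each maximal run of omitted steps rises from weakly below the threshold to strictly above it, and one checks that its total has the right sign. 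Relatedly, you must be careful about which level ($\ell_j$ versus $\ell_j-\w_j$, i.e., endpoint versus starting point of a step) governs the zeta sweep; with the wrong choice the statement you are trying to prove in your first step is simply false.

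The second half of your proposal (a ``zeta-presweep,'' a mirrored forgetful map, and a rerun of the equitable-partition machinery with all conventions flipped) is plausible in principle but is asserted rather than carried out, and it is far more work than necessary. The paper instead observes the conjugation identity $\zeta(\w)=-\left({\sf rev}\circ\swg\circ{\sf rev}\right)(-\w)$. Since $|\w|=0$ for $\w\in\DD_{a,b}$, the word ${\sf rev}(-\w)$ is again a Dyck word, so Theorem~\ref{thm:sweep_dyck} gives Dyck-preservation and Theorem~\ref{thm:sweep_bij} gives injectivity, whence bijectivity on the finite set $\DD_{a,b}$; no new machinery is needed. I would encourage you either to adopt this conjugation, or, if you prefer the direct route, to write out the complementary-runs argument and the mirrored inversion in full, since neither transfers ``verbatim'' as claimed.
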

\begin{proof}
For $\w=\w_1\w_2\cdots \w_{N}$, let \begin{align*}{\sf rev}(\w)&:=\w_{N}\cdots\w_2\w_1 \hspace{1em}\text{ and } \\ -\w&:=(-\w_1)(-\w_2)\cdots(-\w_{N}).\end{align*}   Then the zeta map may be computed as \[\zeta(\w)=-\left({\sf rev} \circ \swg \circ {\sf rev}\right) (-\w).\]  Since $\swg$ is a bijection, we conclude that $\zeta$ is a bijection.
\end{proof}

\begin{remark}
When $a$ and $b$ are relatively prime, there cannot be any proper balanced block suffixes---the only solutions to $xa-yb=0$ are when $x=kb$ and $y=ka$ for $k \in \mathbb{Z}$, but a proper balanced block suffix has fewer than $-b$ copies of the letter $a$ and fewer than $a$ copies of the letter $b$.  Since the only possible balanced block suffix is the entire word itself, when the modulus $m$ is taken large enough that the modular sweep map emulates the sweep map (as in~\Cref{thm:sweep_bij}), then the distributive lattice of equitable partitions is simply a chain.  In particular, in this very special case, every equitable partition is just a cyclic shift of the blocks of the successful partition. 
\end{remark}

\bibliographystyle{amsalpha}
\bibliography{zeta}

\appendix

\section{Differences with~\cite{thomas2014cyclic}}\label{sec:diffs}
Although Algorithms 1 and 2 in~\cite{thomas2014cyclic} are \emph{very slightly} different from~\Cref{map:forward,map:backwards} in this paper, the notion of a successful partition here and in~\cite{thomas2014cyclic} agree, as we now explain.

There are three differences between~\Cref{map:backwards} here and Algorithm 2 in~\cite{thomas2014cyclic}.  The first difference is the letters recorded---at the $i$th step, rather than recording the first letter of the current block $\ell_{N-i+1}$ (as we do here), in~\cite{thomas2014cyclic} we instead record the label of the next block $\ell_{N-i}$.

The second and third differences are purely notational.  We labeled blocks in~\cite[Definition 7.3]{thomas2014cyclic} as \[\bb_{\ell_{N}+1}| \bb_{\ell_{N}+2}|\cdots|\bb_{\ell_{N}},\] while here (for consistency with~\cite{armstrong2014sweep}) we label them as \[\bb_{m-1}|\bb_{m-2}|\cdots|\bb_0.\]  \Cref{map:backwards} here compensates for this by beginning with block $\bb_{\ell_{N}}$ (as opposed to $\bb_0$) and subtracting (as opposed to adding) the first letter of $\bb_{\ell_{N-i+1}}$ when computing $\ell_{N-i}$.

Finally, what we call the ``rightmost equitable partition'' in~\cite{thomas2014cyclic} has now become the ``leftmost equitable partition.''  In~\cite{thomas2014cyclic}, we chose the terminology ``rightmost'' to represent the positions of the \emph{block dividers} in the word $\uu$, rather than the positions of the letters of $\uu$ in the blocks---we have changed this convention here to be more intuitive.

\end{document}